\title[Catalan-many tropical morphisms to trees]{Catalan-many tropical morphisms to trees;
	\\ \small{Part I: Constructions}}
\author{Jan Draisma and Alejandro Vargas}
\begin{document}  		
	\begin{abstract}   
		We investigate the \emph{tree gonality} of a
		genus-$g$ metric graph, defined as the minimum
		degree of a tropical morphism from any
		tropical modification of the metric graph to a
		metric tree. We give a combinatorial
		constructive proof that this number is at most
		$\lceil g/2 \rceil + 1$, a fact whose proofs
		so far required an algebro-geometric detour
		via special divisors on curves. For even
		genus, the tropical morphism which realizes the
		bound belongs to a family of tropical
		morphisms that is pure of dimension $3g-3$ and
		that has a generically finite-to-one map onto
		the moduli space of genus-$g$ metric graphs. Our methods focus on the study of such families. 
		This is part I in a series of two
		papers: in part I we fix the
		combinatorial type of the metric graph to show a bound on tree-gonality, while
		in part II we vary the combinatorial type and
		show that the number of tropical morphisms,
		counted with suitable multiplicities, is the
		same Catalan number that counts morphisms
		from a general genus-$g$ curve to the projective line.
	\end{abstract}
	
	\maketitle  
	
	\subsection*{Introduction} 
	We investigate the \emph{tree gonality} of metric graphs. This concept has its roots in algebraic geometry. The \emph{gonality} of a genus-$g$ curve $X$ is the minimum degree of a non-constant morphism from $X$ to $\mathbb P^1$. The non-constant morphisms $\phi: X \to \mathbb P^1$ are in one-to-one correspondence with linear series of degree $d$ and dimension~$1$ on $X$, so equivalently the gonality of $X$ is the minimum degree of such linear series. Brill-Noether theory for algebraic curves establishes that the gonality of $X$ is at most $\lceil g/2 \rceil + 1 $. The set of curves for which this bound is sharp is dense in the moduli space of genus-$g$ curves~$\mathcal M_g$.

	A striking interplay between
	graphs and algebraic curves has been discovered over the last two
	decades. In tropical geometry the objects that play the role of algebraic curves and of the projective line are metric graphs and metric trees, respectively. This goes beyond a
	formal analogy of concepts: specialization
	lemmas and correspondence theorems between both
	settings are an active field of research at the
	interface between tropical and non-archimedean geometry 
	(see, e.g., \cite{bak08, abbr15, abbr15a}). 
	
	There are two definitions of gonality for (metric) graphs which are relevant to our story.
	A chip-firing game on a (metric) graph yields a divisor theory (see
	\cite{bn07, hkn13}). The \emph{divisorial gonality} of a (metric) graph is the minimum degree of a rank-1 divisor. 
	On the other hand, certain maps between metric graphs with suitable balancing
	conditions bahave similarly to morphisms between
	curves (see \cite{bn09, bbm11, cha13, cap14, mik07}). We call these maps \emph{tropical morphisms}. The
	\emph{tree gonality} of a metric graph is the minimum
	degree of a tropical morphism from any
	\emph{tropical modification} of the metric graph to a
	metric tree. Tropical modification is an operation 
	which retracts or attaches edges ending in a valency-1 vertex, and is useful to satisfy balancing conditions. 
	
	Divisorial gonality mimics the definition of gonality of $X$ via linear series, while tree gonality mimics the one via non-constant morphisms. Unlike the algebro-geometric setting, in the tropical world divisorial gonality and tree-gonality do not coincide. If $\tmor : \mG \to \mT$ is a tropical morphism to a tree, then the divisor $\sum_{x' \in \inv \tmor(x) } m_\dtmor(x') x'$ has rank at least~1 for any choice of $x$ in $\mT$ (see Subsection~\ref{subsection-tropical-morphisms} for notation). So tree gonality is an upper bound of divisorial gonality. On the other hand, there are families of graphs with divisorial gonality equal to 2, and tree gonality equal to $\lceil g/2 \rceil + 1 $, where $g$ is the first Betti number of the graph. At the time of writing, perhaps the most famous open question on divisorial gonality is the following:
	
	
	\begin{conj}[Baker; Conjecture 3.10 in \cite{bak08}] \label{conjecture-baker}
		The divisorial gonality of a finite connected graph $G$ is at most $\lceil g/2  \rceil + 1 $, where $g$ is the first Betti number of $G$.
	\end{conj}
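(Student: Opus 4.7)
The plan is to attack the conjecture through the stronger tropical statement bounding tree gonality. Since tree gonality of a metric graph is an upper bound for its divisorial gonality, and since (by rank-preservation under subdivision of edges) the divisorial gonality of the combinatorial graph $G$ agrees with that of any sufficiently fine rational subdivision of its metric realization $\mathcal{G}$, it suffices to produce a tropical morphism $\varphi : \tilde{\mathcal{G}} \to \mathcal{T}$ of degree at most $\lceil g/2 \rceil + 1$ from some tropical modification $\tilde{\mathcal{G}}$ of $\mathcal{G}$ to a metric tree. The induced vertex-supported divisor of that degree then has rank at least $1$, transferring back to the desired bound on $\operatorname{dgon}(G)$.

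For the construction, I would fix a spanning tree $T \subset G$ so that the $g$ edges outside $T$ are in bijection with a basis of the cycle space (``back edges''). Pair these back edges into $\lfloor g/2 \rfloor$ pairs, leaving at most one unpaired. For each pair $(e, e')$, design a local piece of $\varphi$ that folds the two corresponding fundamental cycles together, producing a local degree-$2$ cover. A base sheet covering $T$ with multiplicity $1$ contributes $1$ to the total degree, plus one further contribution when $g$ is odd; gluing these pieces along the spanning tree yields a morphism of total degree $\lceil g/2 \rceil + 1$.

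The main obstacle is enforcing the local balancing (harmonicity) condition at every vertex of $\tilde{\mathcal{G}}$: an arbitrary pairing of back edges will in general prescribe conflicting local folds at vertices shared by several fundamental cycles. I would encode the compatibility constraints as a matching problem on an auxiliary graph whose vertices are the back edges and whose edges record pairwise conflicts at shared vertices, and argue existence of a compatible pairing by induction — on $g$, or on a suitable measure of complexity of $G$ such as the number of branch vertices. Where the matching fails outright, one may attach valency-$1$ edges (tropical modifications) to absorb local excess without inflating the degree. Verifying that such modifications always suffice, and controlling their interaction across overlapping conflict patterns, is where the bulk of the combinatorial argument will live; the moduli-theoretic viewpoint of a family of tropical morphisms of dimension $3g-3$ mentioned in the abstract strongly suggests such constructions are generically available and should help organize the choice of pairing.
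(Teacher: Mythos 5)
The statement you are trying to prove is stated in the paper as an \emph{open conjecture}, not a theorem: the paper explicitly distinguishes Baker's Conjecture~\ref{conjecture-baker} for finite graphs from its metric-graph analogue, proves only the latter (Theorem~\ref{theorem-gonality}, via tree gonality), and says that a combinatorial proof of the metric version is \emph{hoped} to yield methods for attacking the finite-graph conjecture. Your proposal collapses exactly this distinction. The reduction you assert in the first paragraph --- that ``by rank-preservation under subdivision of edges, the divisorial gonality of the combinatorial graph $G$ agrees with that of any sufficiently fine rational subdivision of its metric realization'' --- is not a known fact; it is essentially the open problem. What is known (Hladk\'y--Kr\'a\v{l}--Norine, Luo) is that a divisor on $G$ retains its rank on subdivisions and on the metric graph $\mG$, which gives $\operatorname{dgon}(\mG)\le\operatorname{dgon}(G)$. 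The direction you need is the reverse: a rank-$1$ divisor on $\mG$ (here, the fibre divisor $\sum_{x'\in\inv\tmor(x)}m_\dtmor(x')x'$ of the tropical morphism, supported at points of a tropical modification that need not be vertices of $G$ or of any fixed subdivision) must be converted into a rank-$1$ divisor of the same degree on the finite graph $G$. No such transfer is established, and whether divisorial gonality can strictly decrease under subdivision is precisely what keeps Conjecture~\ref{conjecture-baker} open. So even granting the metric bound, your argument does not prove the stated conjecture.

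Separately, your sketch of the metric bound itself is not the paper's argument and is not complete on its own terms. The paper does not pair ``back edges'' of a spanning tree and fold fundamental cycles; it takes the explicit degree-$(\lceil g/2\rceil+1)$ constructions of \cite{cd18} as initial points, one for each trivalent combinatorial type, and then deforms within the cone $C_H$ using full-dimensional cones of tropical morphisms, an edge-length matrix of full rank, and a balancing relation among determinants (Lemma~\ref{lemma-balancing-condition}) to cover all edge lengths; the odd-genus case is handled by attaching a loop via a bridge. Your matching-on-conflicts scheme leaves the harmonicity and Riemann--Hurwitz constraints at shared vertices unresolved --- you say yourself that this is ``where the bulk of the combinatorial argument will live'' --- and there is no argument that a compatible pairing exists or that tropical modifications always absorb the local excess without raising the degree. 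As it stands, neither the reduction to the finite-graph statement nor the construction of the metric morphism is proved.
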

	
	This conjecture has attracted a sizeable amount of attention from the community. If one replaces $G$ with a metric graph $\mG$, then the result is proven in \cite[Theorem 3.12]{bak08}. The proof requires an algebro-geometric detour via special divisors on curves. There is wide interest for a purely combinatorial proof, as it is hoped it may yield methods better suited to tackle Conjecture~\ref{conjecture-baker}.
	Our main contribution is such a purely combinatorial proof:
	
	\begin{thm} \label{theorem-gonality}
		The tree gonality of a genus-$g$ metric graph $\mG$ is at most $\lceil g/2\rceil  + 1$.
	\end{thm}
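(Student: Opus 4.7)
The strategy I would pursue is a purely combinatorial construction of the desired tropical morphism, uniform across edge-length specializations of a fixed combinatorial type of $\mG$. Since edge subdivisions and tropical modifications do not affect tree-gonality, I may assume the underlying graph $G$ of $\mG$ is trivalent and bridgeless; it then has $3g-3$ edges and $2g-2$ vertices.

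The plan is to decompose $G$ into subgraphs of genus $1$ or $2$, on which the tree-gonality is already known to equal $2$, and to assemble the local degree-$2$ morphisms into a single global morphism. Fix a spanning tree $\tau$ of $G$, and pair the $g$ chord edges of $G\setminus\tau$ into $\lfloor g/2\rfloor$ pairs, with one leftover chord if $g$ is odd. For each pair $(e_i,e_j)$, the subgraph consisting of $e_i,e_j$ together with the $\tau$-paths between their endpoints is a genus-$2$ subgraph that carries a tropical hyperelliptic involution, i.e.\ a degree-$2$ tropical morphism to a tripod; a leftover chord similarly gives a cycle folded onto a segment by a degree-$2$ map. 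Gluing these targets along a backbone derived from $\tau$ produces a target tree $\mT$ and a candidate morphism $\tmor:\tilde\mG\to\mT$ of degree $\lceil g/2\rceil+1$: each of the $\lceil g/2\rceil$ local pieces contributes two preimages to a generic fiber, and the backbone traversal outside the pieces contributes one more.

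The main obstacle will be verifying the harmonicity (balancing) condition at every vertex of $\tilde\mG$, since the local degree-$2$ involutions must mesh wherever chord-pairs share parts of~$\tau$, and the slopes they impose on shared edges may not match. My plan to handle this is to absorb any discrepancy by introducing auxiliary tropical modifications, namely valency-$1$ leaves on $\tilde\mG$ and $\mT$ carrying prescribed multiplicities; these preserve the global degree while restoring balance locally. Encoding the chord-pairing together with such modification data gives a purely combinatorial type of tropical morphism, and one must check that this type remains a valid tropical morphism for \emph{every} edge-length specialization, not merely generic ones. The dimension count $3g-3$, matching the dimension of the moduli space of trivalent genus-$g$ metric graphs as noted in the abstract, is consistent with a finite-to-one parametrization and supports the feasibility of this approach.
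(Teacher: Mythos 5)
There is a genuine gap, and it sits exactly where you flag it but then wave it through: the claim that your combinatorial type of morphism ``remains a valid tropical morphism for \emph{every} edge-length specialization.'' It does not, and this failure is the central difficulty of the problem. For a fixed combinatorial morphism $\dtmor: G \to T$, the source metric is forced by the target metric via $\ell_G(e) = \ell_T(\dtmor(e))/m_\dtmor(e)$, so the set of genus-$g$ metric graphs realizable by $\dtmor$ is the image of the linear edge-length map $A_\dtmor$ applied to the orthant $\RRo^{E(T)}$ --- a single $(3g-3)$-dimensional polyhedral subcone of $C_H$, in general a \emph{proper} subcone. Your dimension count shows only that this cone has the right dimension, not that it is all of $C_H$. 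Concretely, for the genus-$4$ loop of three loops (Example~\ref{ex-three-cycles-loop}) one needs five distinct full-dimensional gluing datums, selected according to how the length $y_3$ compares with various linear combinations of the other lengths; no single chord-pairing-plus-modification datum covers all of $C_H$. The paper's entire apparatus --- cones of sources, limits at contracted edges, the balancing relation $\sum_q K_q \det(\Aq A q)=0$ among determinants of edge-length matrices, and the case analysis of Section~\ref{sec-constructions} --- exists precisely to prove that when a deformation path in $C_H$ exits one such cone, there is always another full-dimensional cone to continue into. Your proposal contains no substitute for this step.

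Two smaller problems. First, your degree bookkeeping is off: if each of the $\lceil g/2\rceil$ genus-$2$ pieces contributes two preimages to a generic fiber and the backbone one more, the fiber has about $g+1$ points, not $\lceil g/2\rceil+1$; what you presumably intend is that the backbone gives one sheet and each piece adds one further sheet, but then the ``hyperelliptic involution on each piece'' picture no longer describes the map over shared portions of the spanning tree, where many sheets coexist and the index map must satisfy the balancing condition simultaneously for all of them. Second, repairing harmonicity by attaching valency-$1$ leaves with prescribed multiplicities is not a free operation: dangling edges do help satisfy balancing (this is how the paper and \cite{cd18} use them), but they cannot fix an inconsistency in the indices along non-dangling edges, since those indices also determine the source edge lengths and hence must reproduce the prescribed $\ell_H$. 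The paper avoids constructing such a decomposition from scratch by importing Theorem~\ref{theorem-cd} from \cite{cd18} as the starting point and doing all the work in the deformation.
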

	
	In fact, we manage to construct tropical morphisms that witness the bound $\lceil g/2\rceil + 1$ on tree-gonality, so beyond being combinatorial, the proof is also \emph{effective}. There is some computer code and visualizations available at \cite{dra}.

	
	\subsection*{Proof idea} The main idea behind our proof is short, elegant, and ubiquitous: we first analyse a favourable case and then continuously deform the result. We need an initial family of graphs $\mG$ for which we have a construction of a tropical morphism $\tmor: \mG \to \mT$ attaining the gonality bound; and a procedure to deform $\tmor$ into $\bar \tmor : \bar \mG \to \bar \mT$ for any given~$\bar \mG$ via a deformation path in $\MTrop g$, the moduli space of metric graphs. We argue that for this strategy to work we need that $\tmor$, and all but finitely many of the points in the deformation path to~$\bar \tmor$, belong to cones of tropical morphisms that we call \emph{full-dimensional}.  Intuitively, these morphisms \emph{depend} on the right number of parameters to cover $\MTrop g$. 
	
	We use this idea for both parts of this series of articles, only the ingredients change. In Part~I we use constructions from \cite{cd18} as initial families, which gives us a tropical morphism for each possible trivalent graph $H$ (this $H$ is the \emph{combinatorial type}, defined in Subsection~\ref{subsection-tropical-modification}). Thus, the deformation procedure only has to handle paths where the combinatorial type is the same throughout the deformation path in~$\MTrop g$. Setting up the deformation requires analysing 16 cases and a {\em balancing relation} between certain determinants associated to
	the tropical morphisms. This balancing relation, closely
	related to the theory of morphisms between tropical fans
	\cite{gkm09}, ensures
	that we can always proceed further in our desired path from
	$\mG$ to $\bar \mG$. 
	
	In Part~II the initial point of the deformation is our own family of graphs, which we call \emph{caterpillars of loops}. We understand them better than the constructions of \cite{cd18} in that we are able to prove that if $\mG$ is a caterpillar of loops, then there are exactly Catalan-many tropical morphisms realizing the gonality bound. We also strengthen the deformation procedure. By analyzing 8 further cases we can deform past changes of combinatorial type. We show that with suitable multiplicity the count of tropical morphisms realizing the gonality bound remains constant thorough deformation. 
	
	To carry out this simple idea we introduce a technical framework to describe what is a full-dimensional cone and its properties, set up the deformation procedure, and support the case work required to prove the correctness of deformation. The road through Parts I and II is long, but it comes with two rewards: we achieve a combinatorial proof of the gonality bound; at the finish line we arrive at a tropical space that parametrizes tropical morphisms attaining the gonality bound, which surjects onto $\MTrop g$, is connected through codimension~1, and has fibres with Catalan-many points. This mirrors the beautiful classical story due Eisenbud and Harris~\cite{eh87}. It would be interesting to understand how
	our work relates to the tropicalization of the
	space of admissible covers from \cite{cmr16}, but this
	is beyond the scope of these combinatorial papers. 
	
	\subsection*{Organization of article}
	Section~\ref{sec-background} recalls several concepts on
	graphs, metric graphs, and $\MTrop g$.
	Section~\ref{sec-trop-glue} develops a formalism for
	tropical morphisms that highlights the combinatorics. Here we introduce full-dimensional cones of tropical morphisms.
	Section~\ref{sec-gluing-datum} introduces the main
	combinatorial gadget of this article, the gluing datum. We use it to study the properties of full-dimensional cones. Section~\ref{sec-examples} is a brief
	pause from developing the theory, with a fully worked out
	example suggesting the deformation procedure---in fact, we
	encourage the reader to go through this example before going
	through the more technical parts of the article! Section~\ref{sec-limits} resumes the theory with a study of the deformation operation, and a proof of the main result. This proof depends on many constructions that we carry out in Section~\ref{sec-constructions}.

	\section*{Acknowledgements} 
	We thank Erwan Brugall\'e, Melody Chan, Hannah Markwig, and Sam Payne for enlightening discussions, pointers to literature, and insights into future research directions. We thank the hospitality and the marvellous tropical environment at the Mittag-Leffler institute during the semester on ``Tropical Geometry, Amoebas and Polytopes'', where part of this research was done. We thank the general and specific remarks of three anonymous referees, which helped us improve greatly the exposition of the paper.
	
	\section{Background} \label{sec-background}
	
	\subsection{Graphs} \label{subsection-graphs}
	A graph $G$ is a pair $(V(G),E(G))$ of disjoint sets, the vertex and edge sets respectively, and a map $\iota_G$ defined on $E(G)$ encoding incidences,  whose values are multisets of two elements of~$V(G)$. 
	Given an edge $e$ with $\iota_G(e) = \{A, B\}$ we call $A, B$ the \emph{ends} of $e$, and say that $A$ and $e$ are \emph{incident}. A \emph{loop} is an edge $e$ such that its two ends are equal. A \emph{subgraph} of $G$ is a graph $G'$ such that $V(G') \subset V(G)$, $E(G') \subset E(G)$, and $\iota_{G'}$ equals the restriction $\iota_G|_{E(G')}$. A subset $S$ of $V(G)$ induces a subgraph by taking for edge set all the edges of $G$ with both ends in $S$. Likewise, a subset $S$ of $E(G)$ induces a subgraph by taking for vertex set the ends of all the edges in $S$. A graph $G$ is \emph{finite} if both $V(G)$ and $E(G)$ are finite sets; and \emph{loopless} if it has no loops.
	
	A \emph{path} of $G$ is a sequence $\langle A_0, e_1, A_1, \dots, e_\mu, A_\mu \rangle $ of alternately vertices and edges, where no element is repeated, such that consecutive elements are incident and $\mu \ge 1$. The \emph{ends} of a path are the vertices $A_0$, $A_\mu$. The remaining vertices $A_1, \dots, A_{\mu - 1}$ are \emph{interior}.  We call $\mu$ the \emph{length} of $P$. We write $x \in P$ to mean that $x$ is an element in the sequence~$P$. If $P$ is a path with ends $A$, $B$, and $e$ in $E(G) \setminus P$ is an edge that also has ends $A$, $B$, then we call the subgraph consisting of $e$ together with $P$ a \emph{cycle}. Given a vertex $A$, its \emph{connected component} is the subgraph induced by the set of ends of all paths with $A \in P$.  The vertex set $V(G)$ is partitioned into connected components. A graph is \emph{connected} if it has a single connected component.
	
	Let $G$ be a finite, connected graph. Set $g(G)=|E(G)|-|V(G)|+1$. Following a convention due to~\cite{bn07}, we call this number the {\em genus} of $G$. 
	If $G$ has genus 0, then we call $G$ a \emph{tree}.
	Write \emph{$x \in G$} for $x \in V(G) \cup E(G)$. Let $A$ be a vertex. Write $\NeighG G A$ for the subset of $E(G)$ incident to $A$. The \emph{valency} $\valeG G A$ of $A$ is the number of edges in $\NeighG G A$, with loops counting twice. The \emph{minimum valency} of $G$ is the number $\min_{A \in V(G)} \valeG G A$. We write $\Neigh A$ and $\vale A$ when $G$ is clear from the context. A vertex is \emph{monovalent}, \emph{divalent}, \emph{trivalent}, or \emph{$n$-valent} if its valency equals 1, 2, 3, or $n$, respectively. We call a monovalent vertex of a tree a \emph{leaf}. 
	
	Let $S \subset E(G)$. The \emph{deletion of $S$}, denoted by $G \setminus S$, is the graph with edge set $E(G) \setminus S$ and incidence map $\iota_G|_{E(G) \setminus S}$. The \emph{contraction of $S$}, denoted by $G / S$, is the graph $(V(G)/\sim, E(G) \setminus S)$, 
	where $\sim$ identifies the vertices $A$ and $B$ if they are the ends of some $e$ in $S$,  with incidence map $\pi \circ \iota_G|_{E(G) \setminus S}$ with $\pi : V(G) \to V(G)/ \sim$ the canonical projection.  
	The \emph{subdivision} of an edge $e$ yields the graph $(V(G) \cup \{C\}, (E(G) \setminus \{e\}) \cup \{e_1, e_2\})$, where the ends of $e_1$ are $A, C$, and of $e_2$ are $C, B$.
	
	Let $G$, $G'$ be two graphs. A \emph{graph morphism} is a map $\gamma: V(G) \cup E(G) \to V(G') \cup E(G')$ such that: $\gamma(V(G)) \subseteq V(G')$; if $\gamma(e) \in V(G')$ for an edge $e$ in $E(G)$ with ends $A$ and~$B$, then $\gamma(e) = \gamma(A) = \gamma(B)$; if $\gamma(e) \in E(G')$ for an edge $e$ in $E(G)$ with ends $A$ and~$B$, then the ends of $\gamma(e)$ are $\gamma(A)$ and $\gamma(B)$. In essence, a morphism is an incidence preserving map that can contract edges. If $\gamma(E(G)) \subseteq E(G')$, then we call $\gamma$ a \emph{homomorphism}. An \emph{isomorphism} is a bijective homomorphism; its inverse is then a homomorphism as well.
	
	An edge (resp. vertex) labelling is an injective map $\lambda$ from $E(G)$ (resp. $V(G)$) to a set~$S$. A total order $\le_S$ on $S$ induces a total order on $E(G)$ by letting $e \le_{E(G)} e'$ when~$\lambda(e) \le_S \lambda(e')$. We use $\mathbb N$ as labelling set, with its natural order. In the next section we consider elements from the vector space $\RR^{E(G)}$ (real valued functions on $E(G)$). By choosing an edge labelling $\lambda$ this space is identified with $\RR^{|E(G)|}$: we write $e_j \in E(G)$ for $\inv \lambda(j)$ (we use this notation whenever a $\lambda$ has been chosen); we take as ordered basis of $\mathbb R^{E(G)}$ the functions $\ell_i$ that map $e_j$ to $1$ if $i = j$, and zero otherwise.
	
	\subsection{Metric graphs} \label{section-metric-graphs} A \emph{length function} for $G$ is a map $\ell:E(G) \to \RRo$. The pair $(G, \ell)$ gives rise to a one-dimensional CW-complex $\mG$ by gluing the disjoint union of closed real intervals $\mG_e =[0,\ell(e)]$, one for each $e$ in $E(G)$, along their endpoints in the manner prescribed by~$G$.  
	We call the first Betti number of the CW-complex $\mG$ the \emph{genus} of~$\mG$, denoted by $g(\mG)$. It coincides with~$g(G)$.
	This CW-complex is equipped with the shortest-path metric in which the intervals $\mG_e$ have lengths $\ell(e)$. This metric space is called a {\em metric graph}. We call $G$ a \emph{model}, and the pair $(G, \ell)$ a \emph{realization}, of $\mG$.
	
	Let $\mG = (G, \ell_G)$. There are infinitely many realizations of~$\mG$. For example, we often consider the model $G'$ obtained by subdividing $e$ in $E(G)$ into $e_1, e_2$, and taking as length function some $\ell'$ with $\ell'(e_1) + \ell'(e_2) = \ell(e)$, so $(G', \ell')$ also realizes $\mG$. A \emph{vertex set} of $\mG$ is a finite subset $S \subset \mG$ of points such that $\mG \setminus S$ is homeomorphic to a disjoint union of open real intervals. Every realization $(G, \ell_G)$ of $\mG$ induces a vertex set: the set of endpoints of the intervals $\mG_{e}$. We refer to the intervals $\mG_e$ as the edges of $\mG$.  Every vertex set $S$ induces a realization $(G_S, \ell_S)$ of $\mG$: the vertices of $G_S$ are the points in $S$; the edges are the intervals $\mG \setminus S$; the length function $\ell_S$ is given by the lengths of $\mG \setminus S$. Thus, a choice of model $G$ for $\mG$ amounts to a choice of vertex set, and therefore edge set as well.
	
	
	
	
	We single out a particular realization for $\mG$. 
	Consider points $x$ in $\mG$ such that for all $\varepsilon > 0$ the open ball $B(x,\varepsilon)$ with centre $x$ and radius $\varepsilon$ is not isometric to the interval~$(-\varepsilon, \varepsilon)$. We call these \emph{essential vertices}. The naming is due to the fact that if $x$ is an essential vertex, then $x$ cannot be in the topological interior of an edge $\mG_e$. Thus, the set of essential vertices is contained in the set of endpoints of edges $\mG_e$, and so there are finitely many. On the other hand, we call the metric graphs with an empty set of essential vertices \emph{metric loops}. They have genus~1 and admit the graph with one vertex and one loop as a model.
	
	\begin{lm} \label{lemma-model-metric-graph}
		Let $\mG$ be a metric graph distinct from a metric loop, $\mathcal E$ the essential vertices of $\mG$, and $S \subset \mG$ a finite set of points. Then $S$ is a vertex set if and only if $\mathcal E \subset S$.
	\end{lm}
	
	The valency of a point $x$ in $\mG$ is the valency of $x$ in $G_S$ for $S$ a vertex set with $x$ in~$S$.
	If $\mG$ is not a metric loop, then we call the realization $(G_{\mathcal E}, \ell_{\mathcal E})$ the \emph{essential realization}. Thus, by Lemma \ref{lemma-model-metric-graph} the \emph{essential model} $G_{\mathcal E}$ is minimal in the sense that every other model arises from a sequence of edge subdivisions of $G_{\mathcal E}$. Hence, all the realizations are refinements of the essential realisation. This shows $\vale x$ to be well defined.

	\subsection{Tropical modification} \label{subsection-tropical-modification}
	In the literature, \emph{tropical modification} of a metric graph $\mG = (G, \ell)$ is an operation that deletes or attaches a segment ending in a monovalent point. For investigating tropical morphisms we consider metric graphs to be equivalent under a sequence of these operations. This allows to meet certain \emph{balancing conditions}, and does not alter $\mG$ in a significant manner. Our approach to tropical modification is based on the notion of dangling elements in the model $G$:
	
	\begin{de} \label{de-dangling-char}
		An edge $e$ of $G$ is \emph{dangling} if deleting $e$ produces two connected components and one of those is a tree. A vertex $A$ of $G$ is dangling if all $e$ in $E(A)$ are dangling.
	\end{de}
	
	This definition detects points contained in a segment that has been attached to $\mG$. Fix a model $G$ for $\mG$. Let $x$ be in $\mG$. If $x$ is in the vertex set induced by $G$, then we say that $x$ is dangling when its corresponding vertex in $G$ is dangling. Otherwise $x$ is in the interior of an edge $\mG_e$ and we say that $x$ is dangling if $e$ is dangling. It is straighforward to see that this definition is independent of the chosen realization $(G, \ell)$.
	
	Let $\tG$ be the graph obtained by deleting all the dangling elements of $G$. By Definition~\ref{de-dangling-char} we have that $g(G) = g(\tG)$. 
	Let $\mH = (\tG, \ell|_{\tG})$, where $\ell|_{\tG}$ is the restriction of $\ell$ to $E(\tG)$. We identify $G$ with $\tG$ and $\mG$ with $\mH$ under tropical modification. 
	Let $H$ be the essential model of $\mH$. The construction of $\mH$ and Lemma~\ref{lemma-model-metric-graph} together imply that:
	\begin{lm} \label{lemma-combinatorial-type}
		Let $\mG$ be a metric graph of genus at least~2, and $\mG'$ a tropical modification of $\mG$, such that $\mG'$ has a model $G'$ with minimum valency at least three. Then $\mG'$ is isometric to $\mH$ and $G'$ is isomorphic to $H$.  
	\end{lm}
	
	Thus, $\mH$ is a canonical representative of the equivalence class under isomorphism and tropical modification of $\mG$. We call a graph with minimum valency at least three a \emph{combinatorial type}. This is an important invariant under tropical modification, so we say that $H$ is the combinatorial type of $\mG$. 
	
	
	\subsection{Moduli space of metric graphs}  \label{section-moduli-space-of-metric-graphs}
	We give a barebones account of the moduli space of genus-$g$ metric graphs $\MTrop g$.  If all the vertices of a graph are trivalent, we call it a \emph{trivalent combinatorial type}, or simply \emph{trivalent}. We denote by $\mathbb G_g$ the family of genus-$g$ trivalent combinatorial types. For the construction of $\MTrop g$ we use the following straightforward lemma:
	
	\begin{lm} \label{lemma-trivalent}
		Let $g \ge 2$ and $H_0$ be a genus-$g$ combinatorial type. There exists $H$ in $\mathbb G_g$ such that there is a sequence of edge contractions of $H$ that yield a graph isomorphic to $H_0$.
	\end{lm}
	
	Given a graph $G$, we denote by $C_G$ the set of length functions defined on $G$. This set has a cone structure, as it equals the positive orthant $\RRo^{E(G)}$. We identify each point $\ell$ of $C_G$ with the metric graph~$(G, \ell)$ to obtain the (rational polyhedral) cone $C_G$ of metric graphs with model $G$.
	
	Motivated by Lemma~\ref{lemma-trivalent} we add some of the boundary points of $C_G$ with the following convention: 
	given a map $\ell: E(G) \to \RRgo$, contract all the edges of~$G$ for which $\ell(e) = 0$, to get $G'$;  
	the pair $(G, \ell)$ refers to the metric graph~$(G', \ell|_{G'})$. Let $\overline{C}_G \subset \RRgo^{E(G)}$ be the set of maps $\ell$ for which the metric graph $(G, \ell)$ has genus $g$, that is, all cycles of $G$ have positive length. gluing together these completed cones gives rise to a space where points bijectively correspond to classes under tropical modification of genus-$g$ metric graphs:
	
	\begin{de}The moduli space of genus-$g$ metric graphs is the space
		\[\MTrop g = \left(\bigsqcup_{\substack{H \in \mathbb G_g}} \overline{C}_H \right) \big / \cong, \]
		where $\cong$ identifies points $\ell_H$ and $\ell_{H'}$ for which $(H, \ell_H)$ and $(H', \ell_{H'})$ are isometric. It is given the quotient topology. 
	\end{de}
	
	Given a metric graph $\mG$, by Lemma~\ref{lemma-combinatorial-type} and \ref{lemma-trivalent} there is a point in $\MTrop g$ corresponding to a graph $\mH$ that is equivalent to $\mG$ under tropical modification. By construction this point is unique.
	
	\begin{re}
		The reader might find in the literature a different use for the symbol $\MTrop g$, albeit for constructions related to ours. They differ by imposing additional structure on the metric graph: marked points, legs of infinite length, weights on the vertices, and more. These are not required for our purposes, as they typically arise to encode more algebro-geometric information in a graph. See \cite{koz09} and \cite{acp15} for in-depth studies.
	\end{re}

	\begin{re}
		Identifying isometric points complicates the geometry of $\MTrop g$. We do not obtain a polyhedral cone complex, as one might expect. This issue is addressed in \cite{acp15} with the introduction of \emph{abstract polyhedral cone complexes}. See their Section~4.3 for a proof that $\MTrop g$ is an abstract polyhedral cone complex.
	\end{re}
	
	Let $H$ be a combinatorial type. From $2|E(H)| = \sum_{A \in V(H)} \vale A$ one can conclude that $H$ is trivalent if and only if $|E(H)| = 3g-3$. Hence, $\MTrop g$ is pure of dimension $3g-3$.
	
	
	\subsection{Notation} \label{sec-notation} 
	In the next four sections the following letters, and small variations thereof, are used to denote specific objects. Metric graphs: $\mG$ for metric graphs with $g(\mG) \ge 2$; $\mT$ for metric trees (due to tree being \textgreek{δέντρο} in Greek), 
	that is genus 0 metric graphs; and $\mH$~for the deletion of the dangling elements of $\mG$. Finite graphs: $G$ for a finite connected graph with $g(G) \ge 2$,  $A$ for its vertices and $e$ for its edges; $T$~for a finite tree, $v$ for its vertices and~$t$ for its edges; and $H$ for the combinatorial type of $G$. Morphisms: $\dtmor$ for maps between finite graphs; and $\tmor$ for maps between metric graphs. We write $[d]$ for the set  $\{1, 2, \dots, d \}$, and $x \in G$ for $x \in V(G) \cup E(G)$.

	
	
	\section{Tropical morphisms} \label{sec-trop-glue}

	\subsection{\DTmors} \label{sec-trop-morph}
	
	We begin with a map encapsulating the combinatorial information of tropical morphisms. These are a special class of graph morphisms, augmented with an index map. 
	
	\begin{de}[discrete tropical morphism] \label{def-discrete-tropical-morphism}
		Let $G$, $G'$ be loopless graphs, $\dtmor \!:\! G \to G'$ a morphism, and $m_\dtmor: G \to \ZZgo$ a map.
		\begin{enumerate}[(a)] 
			\item $m_\dtmor$ is an \emph{index map} for $\dtmor$ if for every $e$ in~$E(G)$ we have that $\dtmor(e) \in V(G')$ if and only if $m_\dtmor(e) = 0$. That is, only index zero edges are contracted by $\dtmor$.
			\item $\dtmor$ is \emph{harmonic} with index map $m_\dtmor$ if it satisfies the \emph{balancing condition}: for every $A$ in $V(G)$ and $e'$ incident to $\dtmor(A)$ we have that
			\[ m_\dtmor(A) = \sum_{e \in E(A) \cap \inv \varphi(e')} m_\varphi(e).\]
			Note that this makes the sum independent of the choice of $e'$.
			\item $\dtmor$ is \emph{non-degenerate} if $m_\dtmor(x) \ge 1$ for all $x$ in $G$.  
			\item $\varphi$ satisfies the \emph{Riemann-Hurwitz condition} (RH-condition) if for every $A$ in $V(G)$:
			\[r_\dtmor(A) = (\vale A - 2) - (\vale \varphi(A) - 2) \cdot m_\varphi (A) \ge 0. \]
			\item A \emph{discrete tropical morphism} is a pair $(\dtmor, m_\dtmor)$ consisting of a non-degenerate harmonic morphism $\varphi$ with index map $m_\dtmor$ that satisfies the RH-condition. We write \DTmor to shorten. 
		\end{enumerate}
	\end{de}
	
	\begin{re}The non-degeneracy condition ensures that no edge is collapsed. \end{re}

	\begin{re} \label{re-difference-caporaso}
		Definition~\ref{def-discrete-tropical-morphism} is similar to Definition 2.1 of \cite{cap14}, but there are differences:  condition (c) of Definition 2.1 in the cited work is required only for vertices of $G$ (see discussion in Section 1.3 of \cite{cd18}); an alternative formula for $r_\dtmor$ is given on  Equation~(10) of the cited work (see Lemma~\ref{lemma-formula-rphi}); and we do not have a vertex labelling representing extra~genus. 
	\end{re} 
	
	\begin{lm}[formula for $r_\varphi$] \label{lemma-formula-rphi} 
		Let $\dtmor$ be a \DTmor with index map $m_\dtmor$. Then
		\[r_\dtmor(A) = 2(m_\dtmor(A) - 1) - \sum_{e \in E(A)} (m_\dtmor(e) - 1).\]
	\end{lm}
	
	\begin{proof}
		By the balancing condition $m_\dtmor(A) \cdot \vale \dtmor(A) = \sum_{e \in \Neigh A} m_\dtmor(e)$. Thus,
		\begin{align*} 
		r_\varphi(A) &= \vale A - 2 - m_\varphi (A) \cdot (\vale \dtmor(A) - 2)\\
		&= 2(m_\dtmor(A) - 1) + \vale A - \sum_{e \in \Neigh A} m_\dtmor(e)\\
		&= 2(m_\dtmor(A) - 1) - \sum_{e \in \Neigh A} (m_\dtmor(e) - 1). \qedhere
		\end{align*}
		
	\end{proof}
	
	We always denote the index map of $\dtmor$ by $m_\dtmor$. So we write ``\emph{$\dtmor$ is a \DTmorp}'' instead of ``\emph{$\varphi$ is a \DTmor with index map $m_\dtmor$}''. We call $G$ the \emph{source}, and $G'$ the \emph{target}, of $\dtmor$. The index $m_\dtmor$ can be understood as a local degree: for $A$ in $V(G)$ restricting $\dtmor$ to the subgraph $G_A$ induced by $E(A)$ gives a \DTmor onto the subgraph induced by $\dtmor(E(A))$; any fibre of $\dtmor |_ {G_A}$ has $m_\dtmor(A)$ elements, each element counted with multiplicity~$m_\dtmor$. Since $G$ is connected this local degree gives rise to a global degree, where the constant obtained in the count of preimages with multiplicities is called the \emph{degree} of~$\dtmor$: 
	\[\deg \dtmor = \sum_{\dtmor(A) = A'} m_\dtmor(A);\]
	this is independent of $A'$ by the connectedness of $G$ (see proof of Lemma 2.4 of \cite{bn09}).
	
	\begin{ex} \label{example-discrete-tropical-morphism}	
		Figure~\ref{figure-1} illustrates two \DTmors $\dtmor_A$, $\dtmor_B$ of degrees 3 and~4, respectively. The targets are drawn below, the sources above, and the fibres with dotted lines. The edges have index~1, unless noted otherwise with a number above the edge.
		\vspace{1em}
		
		\noindent	
		\begin{minipage}{0.95\textwidth}
			\begin{minipage}[t]{0.6\textwidth}
				\centering
				\begin{overpic}[scale=1.1]{\figsdir/271.pdf}    
					\put (66,47) {\scalebox{0.6}{$2$}}
				\end{overpic} 
			\end{minipage}
			\begin{minipage}[b]{.35\textwidth} 
				
				\begin{overpic}[scale=1.1]{\figsdir/278.pdf}    
					\put (46,47) {\scalebox{0.6}{$2$}}
					\put (65,47) {\scalebox{0.6}{$2$}}
				\end{overpic} 
			\end{minipage}
			\\
			\noindent 
			\begin{minipage}[t]{.6\textwidth} 
				\hspace{11em}
				$\dtmor_A$			
			\end{minipage}\hspace{1em}
			\begin{minipage}[t]{.35\textwidth} 
				\hspace{5.5em}
				$\dtmor_B$
			\end{minipage}
			
			\captionof{figure}{} \label{figure-1}
		\end{minipage}
	\end{ex}

	This setup is parallel to the classical theory of maps between Riemann surfaces. There is a Riemann-Hurwitz formula relating the genera of the source and the target.
	
	\begin{lm}[Riemann-Hurwitz formula] \label{lemma-riemann-hurwitz} 
		Let $\dtmor \!:\! G \to G'$ be a \DTmorp. Then
		\[
		2g(G) - 2 = \deg \dtmor \cdot (2g(G')-2) + \sum_{A \in V(G)} r_\dtmor(A).
		\] 
	\end{lm}
	
	\begin{proof}
		Recall that in a graph the sum of all the valencies is twice the number of edges:
		\begin{align*} 
		\sum_{A \in V(G)} r_\dtmor(A) &= \sum_{A \in V(G)} (\vale A - 2 - m_\varphi (A) \cdot (\vale \dtmor(A) - 2) )\\
		&= 2|E(G)|-2|V(G)|- \deg \dtmor \cdot (2|E(G')|-2|V(G')|)\\
		&= 2g(G) - 2 - \deg \dtmor \cdot (2\cdot g(G')-2). 
		\end{align*}
		Rearrange terms to obtain the result.
	\end{proof}
	The RH-condition implies that the sum $\sum_{A \in V(G)} r_\dtmor(A)$ is positive, thus $g(G) \ge g(G')$ by Lemma~\ref{lemma-riemann-hurwitz}.  As a side remark, consider Equation~(15) of \cite{cap14}. By taking degrees of the divisors we recover Lemma~\ref{lemma-riemann-hurwitz}. Thus, one of the motivations for the RH-condition is a necessary and sufficient condition to have the ramification divisor be effective, as in the classical setting.
	
	\subsection{Tropical morphisms} \label{subsection-tropical-morphisms} By choosing a length function for the target $G'$ we obtain a \emph{tropical morphism} of metric graphs, in the sense of \cite{cd18}.
	
	\begin{de}[tropical morphism] \label{definition-tropical-morphism}  Let $\dtmor  \!:\! G \to G'$ be a \DTmor and $\ell_{G'}$ in~$C_{G'}$. The corresponding \emph{tropical morphism} is the unique map $\tmor$ from the source metric graph $\mG = (G, \,\, e \mapsto \ell_{G'} (\varphi(e))\,/\, m_\varphi(e) \, )$ to the target $\mG' = (G', \ell_{G'})$, whose restriction to $\mG_e$ is a bijective linear function to $\mG'_{\dtmor(e)}$, and that agrees with $\dtmor$ on vertex points.
	\end{de}
	
	Call $\dtmor$ and $(\dtmor, \ell_{G'})$ a \emph{model} and a \emph{realization} of $\tmor$, respectively. From the connectedness of $G$ and $G'$ we get that $\tmor$ is continuous. The length function for the source is constructed so the linear map $\tmor|_{\mG_e}$ has integral slope $m_\dtmor(e)$. For a point $x$ in the interior of an edge the slope is the multiplicity, and if $x$ is a vertex then the multiplicity is computed with the balancing condition. With this, the count with multiplicities of points in the fibre $\inv \tmor (x')$ is independent of  $x'$ in $\mG'$, and moreover agrees with the degree of any model of $\tmor$. 
	
	Definition \ref{definition-tropical-morphism} differs from \cite{cd18} because here our focus is on combinatorial constructions. In the previous paragraph we have argued that the map $\tmor = (\dtmor, \ell_{G'})$ is a tropical morphism in the sense of \cite{cd18}. Conversely, given a tropical morphism $\tmor \!:\! \mG \to \mG'$, we choose vertex sets $\mathcal G$, $\mathcal T$ on $\mG, \mG'$ such that $\tmor$ induces a \DTmor $\dtmor_{\text{ess}}$. 
	
	\begin{cons}[essential model] \label{construction-canonical-model}
		Let $\tmor \!:\! \mG \to \mG' $ be a tropical morphism, and $\mathcal E$, $\mathcal E'$ the essential vertices of $\mG$, $\mG'$ respectively. Set $\mathcal T = \mathcal E' \cup \tmor(\mathcal E)$ and $\mathcal G =  \inv \tmor (\mathcal T)$. By Lemma~\ref{lemma-model-metric-graph} these are vertex sets, so let $(G, \ell), (G', \ell')$ be the induced realizations of $\mG$, $\mG'$. Define a map 
		$\dtmor_{\text {ess	}} : G \to G'$ by sending a vertex of $\mG$ to the corresponding vertex in $\mG'$, 
		and for each edge $e$ in $E(G)$ choose a point $x$ in $\mG_e$ and map $e \mapsto e'$, where $e'$ is such that $\tmor(x)$ is in~$\mG'_{e'}$. As index map take $m_{\dtmor_{\text {ess}}} = \ell' \circ \dtmor(e)/\ell(e)$.
	\end{cons}
	
	\begin{lm} \label{lemma-dtmorph-minimal-model}
		The pair $(\dtmor_{\text {ess}}, \ell')$ realizes $\tmor$, and is minimal in the sense that for any other model of $\tmor$, the induced vertex sets on $\mG, \mG'$ arise as edge subdivisions of $\mathcal G$, $\mathcal T$.
	\end{lm}
	
	\begin{proof}
		We claim that $\dtmor_{\text {ess}}$ is well defined and is a \DTmorp. Let $x$ in $\mG_e$ and $e'$ in $E(G')$ such that $\tmor(x)$ is in $\mG'_{e'}$. We show that $e'$ only depends on $e$. To see this, take $x_1$, $x_2$ in $\mG_e$. There is a topological path $P$ (the image of a continuous map $\omega: [0,1] \to \mG$) from $x_1$ to $x_2$ contained in the interior of $\mG_e$, so in particular it does not pass through any vertex of~$\mathcal G$. Thus, $\tmor(P)$ does not pass through any vertex $x'$ of $\mathcal T$ (otherwise $P$ would pass through some vertex in $\inv \tmor(x') \subset \mathcal G$). Hence, $\tmor(x_1)$ and $\tmor(x_2)$ are in the same $\mG'_{e'}$. The index map $m_{\dtmor_{\text {ess}}}$ records the slope of $\tmor|_{\mG_{e}}$, thus the balancing condition, non-degeneracy, and the RH-condition are satisfied. It is then clear that $(\dtmor_{\text {ess}}, \ell')$ realizes $\tmor$.
		
		Regarding minimality, as \DTmors are graph morphisms we get that $\dtmor(V(G)) \subset V(G')$. Thus, in every model $\dtmor$ of $\tmor$ the vertex set of $\mG'$ has to contain~$\tmor(\mathcal E)$ (and $\mathcal E'$). Furthermore, the non-degeneracy condition of $\dtmor$ implies that the fibre of a vertex of $G'$ is a subset of $V(G)$. So a vertex set of $\mG$ has to contain~$\inv \tmor(\mathcal T)$. 
	\end{proof}

	\subsection{Isomorphisms, tropical modification, combinatorial type}  \label{subsection-isomorphisms-tropicalmodification-combinatorialtype} Recall that in the space $\MTrop g$ we consider metric graphs in an equivalence class under isometry and tropical modification. This setup extends naturally to tropical morphisms.

	\begin{de}[isomorphism of graph morphisms] \label{definition-tropical-iso} 
		
		Let $\dtmor_1: G_1 \to G'_1$ and $\dtmor_2: G_2 \to G'_2$ be graph morphisms. An isomorphism from $\dtmor_1$ to $\dtmor_2$ is given by graph isomorphisms $(\gamma, \gamma')$ for which Diagram~\ref{diagram-iso}.a commutes. If additionally $\dtmor_1$ and $\dtmor_2$ are \DTmorsp, then $(\gamma,\gamma')$ is an isomorphism of \DTmors if the index map $m_{\dtmor_2}$ pulls back to~$m_{\dtmor_1}$, namely Diagram~\ref{diagram-iso}.b commutes. Let $\tmor_1$ and $\tmor_2$ be tropical morphisms. An isomorphism from $\tmor_1$ to $\tmor_2$ is given by isometries $\Psi$ and $\Psi'$ for which Diagram~\ref{diagram-iso}.c commutes.

		\begin{minipage}[b]{0.9\textwidth}
			\centering
			\begin{minipage}{0.3\textwidth}
				\centering
				\[
				\begin{tikzcd}
				G_1 \arrow[r,"\dtmor_1"] \arrow[d,swap,"\gamma"] &
				G'_1 \arrow[d,"\gamma'"] \\
				G_2 \arrow[r,"\dtmor_2"] & G_2'
				\end{tikzcd}	
				\]
				
				(a)
			\end{minipage}
			\begin{minipage}{0.3\textwidth}
				\centering
				\[ 
				\begin{tikzcd}
				G_1 \arrow[d,swap,"\gamma"] \arrow{dr}{m_{\dtmor_1}} \\
				G_2 \arrow[r,swap,"m_{\dtmor_2}"] & \mathbb Z_{\ge 0}
				\end{tikzcd} 
				\]
				
				(b)
			\end{minipage}
			\begin{minipage}{0.3\textwidth}
				\centering
				\[ 
				\begin{tikzcd}
				\mG_1 \arrow[r,"\tmor_1"] \arrow[d,swap,"\Psi"] &
				\mG'_1 \arrow[d,"\Psi'"] \\
				\mG_2 \arrow[r,"\tmor_2"] & \mG_2'
				\end{tikzcd} 
				\]
				
				(c)
			\end{minipage}
			
			\newcounter{diagram}
			\refstepcounter{thm}
			\label{diagram-iso}
			Diagram \ref{diagram-iso}
		\end{minipage}
	\end{de}

	We now define equivalence under tropical modification for a \DTmorp{} $\dtmor \!:\! G \to G'$. We say that the fibre $\inv \dtmor(x')$, for $x'$ in $G'$, is \emph{dangling} if all its elements are dangling in~$G$. Let $D \subset E(G')$ be the set of edges with dangling fibres. Consider the subgraph $\hat G'$  of $G'$ induced by $E(G') \setminus D$. The preimage $\inv \dtmor(\hat G')$ is not necessarily a connected graph (see Example~\ref{example-combinatorial-type-of-DTmors}), but the definition of dangling implies that there is only one connected component with non-zero genus. Call it $\hat G$. Restricting $\dtmor$ to $\hat G$ gives a \DTmor $\hat \dtmor: \hat G \to \hat G'$ that we call the \emph{deletion of dangling fibres} of $\dtmor$. See the Section \emph{Modifications} in \cite{cd18} for some examples. Tropical modification identifies $\dtmor$ with $\hat \dtmor$.
	
	The same definitions and constructions can be carried out on the metric side for a tropical morphism $\tmor = (\dtmor, \ell)$. A fibre $\inv \tmor(x)$ is dangling if all its points are dangling, and the deletion of dangling fibres $\hat \tmor$ is the restriction of $\tmor$ to the connected component with non-zero genus which remains after deleting all points belonging to dangling fibres. One can check that $(\hat \dtmor, \ell|_{\hat G'})$ is a realization of $\hat \tmor$. Tropical modification identifies $\tmor$ with $\hat \tmor$. It is straightforward to see that if $\tmor_1$ and $\tmor_2$ are equivalent under tropical modification, then $\hat \tmor_1$ and $\hat \tmor_2$ are isomorphic, thus $\hat \tmor$ is a canonical representative for the equivalence class of $\tmor$ under isomorphism and tropical modification. Thus, it is natural to use a model of $\hat \tmor$ as the \emph{combinatorial type} of the class of $\tmor$:
	
	\begin{de} \label{definition-combinatorial-type-tmor}
		Let $\tmor$ be a tropical morphism and $\hat \tmor$ its deletion of dangling fibres. The \emph{combinatorial type} of $\tmor$ is the model $\hat \dtmor_{\text {ess}}$ of $\hat \tmor$ constructed using Construction~\ref{construction-canonical-model}.
	\end{de}
	
	Alternatively, one can use Construction~\ref{construction-canonical-model} on $\tmor$ to get $(\dtmor_{\text {ess} }, \ell_{\text {ess} })$ and delete all dangling fibres to get $\widehat{\dtmor_{\text {ess} }}$. It is straightforward to see that $\widehat{\dtmor_{\text {ess} }}$ is isomorphic to $\hat \dtmor_{\text {ess}}$. So $(\hat \dtmor_{\text {ess} }, \ell_{\text {ess} }|_{\hat G_{ess}'})$ realizes $\hat \tmor$. We call the values of $\ell_{\text {ess} }|_{\hat G_{ess}'}$ the \emph{parameters} of $\tmor$. Thus, we say that $\tmor$ moves in the cone $C_\dtmor$ with dimension $\dim C_\dtmor$ and depends on $|E(\hat G_{ess}')|$ many parameters.

	\subsection{Cone of sources} 
	We associate to a \DTmor $\dtmor$ the set $C_\dtmor$ of sources (modulo tropical modification) obtained by varying the parameters. By Section~\ref{subsection-isomorphisms-tropicalmodification-combinatorialtype} this is an invariant under isomorphism and tropical modification.  We say that $\dtmor$ \emph{realizes} $\mG$ if $\mH \in C_\dtmor$ (recall that $\mH$ is obtained by deletion of dangling points). We introduce an important linear map to see that $C_\dtmor$ has the structure of a polyhedral cone. Another important invariant under tropical modification of $\dtmor$  is the combinatorial type of its source, which we denote by~$H(\dtmor)$.

	Let $\dtmor \!:\! G \to G'$ be a \DTmorp, $H = H(\dtmor)$, and $g = g(H)$. 
	Consider a length function $\ell_{G'}$ in~$C_{G'}$; the tropical morphism $\tmor = (\dtmor,\, \ell_{G'})$; and $\mH = (H, \ell_H)$ where $(H, \ell_H)$ is the deletion of dangling elements of the source of $\tmor$. Extend the map sending $\ell_{G'}$ to $\ell_H$ (defined from $\RRo^{E(G')}$ to $\RRo^{E(H)}$) to a linear map $A_\dtmor$ from 
	$\RR^{E(G')}$ to $\RR^{E(H)}$. We call $A_\dtmor$ the \emph{edge-length map}. Recall from Section~\ref{subsection-graphs} that by choosing labellings $\lambda_{H}$ and $\lambda_{G'}$ we identify $\RR^{E(H)}$ and $\RR^{E(G')}$ with $\RR^{|E(H)|}$ and $\RR^{|E(G')|}$. This gives an $|E(H)| \times |E(G')|$-matrix for $A_\dtmor$.
	
	We define the \emph{cone of sources} $C_\dtmor$ to be $A_\dtmor(\RRo^{E(G')})$. It
	parametrizes, up to tropical modification, the sources of tropical morphisms with model~$\dtmor$. Let $\tmor$ be a tropical morphism with model $\dtmor$. We say that $\tmor$ \emph{moves} in the cone $C_\dtmor$. 
	
	Note that $\dim C_\dtmor = \rk A_\dtmor \le 3g - 3$. We are interested in tropical morphisms $\tmor$ that move in a cone whose dimension equals the number of parameters defining $\tmor$. Namely, those for which $\rk A_\dtmor = |E(G')|$. We say that these cones, and \DTmorsp, have \emph{full-rank}.
	If $\dtmor$ has full-rank then there is a unique preimage $\inv A_\dtmor(\ell_H)$ for $\ell_H$. In other words, a unique tropical morphism $(\dtmor, \inv A_\dtmor(\ell_H))$ whose source retracts to $\mH = (H, \ell_H)$. 
	
	\subsection{Combinatorial types} \label{subsection-combinatorial-type-of-dtmors} Let $\dtmor \!:\! G \to G'$ be a \DTmorp, and $\tmor$ a tropical morphism with model $\dtmor$. Recall that the combinatorial type of $\tmor$ is the essential model $\hat \dtmor_{\text{ess}}$ of the deletion of dangling points $\hat \tmor$. Now we give an analogue of Lemma~\ref{lemma-combinatorial-type} for tropical morphisms; namely we give combinatorial conditions that characterize the \DTmors that show up as combinatorial types. We also show that \DTmors with full-rank satisfy these conditions.
	
	\begin{de} \label{definition-combinatorial-type}
		A \emph{combinatorial type of \DTmorsp} is a \DTmorp{} $\dtmor: G \to G'$ without dangling fibres and such that $\sum_{A \in \inv \dtmor(v)} r_\dtmor(A) \ge 1$ for every divalent $v$ in $V(G')$.
	\end{de}
	
	\begin{ex} \label{example-combinatorial-type-of-DTmors} Figure~\ref{figure-2} illustrates two \DTmors $\dtmor_A$, $\dtmor_B$. They are equivalent under tropical modification, and $\dtmor_B$ is a combinatorial type of \DTmorsp. Note that the degree is not invariant.

		\vspace{1em}
		
		\noindent	
		\begin{minipage}{0.95\textwidth}
			\begin{minipage}{.45\textwidth}
				\flushright
				\begin{overpic}[scale=1.3]{\figsdir/274.pdf}    
					
				\end{overpic} 
			\end{minipage}\hspace{1em}
			\begin{minipage}{.45\textwidth} 
				\centering
				\begin{overpic}[scale=1.3]{\figsdir/275.pdf}    
				\end{overpic} 
			\end{minipage} 
			\\ ~ \\
			\noindent 
			\begin{minipage}[t]{.45\textwidth} 
				\hspace{11.5em}
				$\dtmor_A$			
			\end{minipage}\hspace{1em}
			\begin{minipage}[t]{.45\textwidth} 
				\hspace{10em}
				$\dtmor_B$
			\end{minipage}
			
			\captionof{figure}{} \label{figure-2}
		\end{minipage}
	\end{ex}

	\begin{lm} \label{lemma-combinatorial-type-tmor}
		Let $\tmor$ be a tropical morphism, and $\bar \tmor: \bar \mG \to \bar \mG '$ be a tropical modification of $\tmor$, such that $\bar \tmor$ has a model $\bar \dtmor: \bar G \to \bar G'$ that is a combinatorial type of \DTmorsp. Then $\bar \tmor$ is isomorphic to $\hat \tmor$ and $\bar \dtmor$ is isomorphic to $\hat \dtmor_{\text{ess}}$.  
	\end{lm}
	
	\begin{proof}
		By definition $\bar \dtmor$ has no dangling fibres. So by Lemma~\ref{lemma-dtmorph-minimal-model} we only need to show that the points of $\bar \mG'$ corresponding to vertices of $\bar G'$ are in the set $\mathcal T$, with $\mathcal T$ as in Construction~\ref{construction-canonical-model} (the union of the set of essential vertices of $\bar \mG'$ with the image of the set of essential vertices of $\bar \mG$). This means proving that for every divalent $v$ in $V(\bar G')$ there is a vertex $A$ in $\inv {\bar \dtmor}(v)$ such that $\vale A \ge 3$. 
		Note that for $A$ in $\inv {\bar \dtmor}(v)$ we have $r_{\bar \dtmor}(A) = \vale A - 2 - m_{\bar \dtmor}(A)(\vale v - 2) = \vale A - 2$. By the balancing condition there are no monovalent vertices in $\inv {\bar \dtmor}(v)$.
		Thus, $\sum_{A \in \inv {\bar \dtmor}(v)} r_{\bar \dtmor}(A) = \sum_{A \in \inv {\bar \dtmor}(v)} \vale A - 2 \ge 1$ if and only if there is at least one vertex $A$ in $\inv {\bar \dtmor}(v)$ with $\vale A \ge 3$.
	\end{proof}

	Combinatorial types of \DTmors are canonical representatives in the equivalence class under tropical modification: the discussion after Definition~\ref{definition-combinatorial-type-tmor} tells us how to construct them, and Lemma~\ref{lemma-combinatorial-type-tmor} ensures their uniqueness.
	
	We call the sum $\sum_{A \in \inv \dtmor(v)} r_\dtmor(A)$ the \emph{change} at $v$, and denote it by $\ch v$. We have seen in the proof of Lemma~\ref{lemma-combinatorial-type-tmor} that divalent $v$ with $\ch v = 0$ arise as edge-subdivisions of the essential model. Thus, for divalent $v$ if $t, t'$ are the edges incident to $v$, then morally the fibres of $t$, $t'$ are equal (each $A$ in $\inv \dtmor(v)$ has valency 2, and the two edges have the same index $m_\dtmor$). This implies the following result, which is our motivation for calling this quantity ``change''.
	
	\begin{lm} \label{lemma-equal-columns}
		Let $v$ in $V(G')$ be divalent, and $\Neigh A = \{t,t'\}$. If $\ch v = 0$, then the columns of $A_\dtmor$ corresponding to $t$, $t'$ are equal.
	\end{lm}
	
	\begin{lm}
		If $\dtmor : G \to G' $ has full-rank, then $\dtmor$ is a combinatorial type of \DTmorsp.
	\end{lm}
	
	\begin{proof}
		Let $H$ be the combinatorial type of $G$, and $A_\dtmor: C_{G'} \to C_H$ be the edge-length map. Suppose $t$ in $E(G')$ has a dangling fibre. Then varying $\ell_{G'}(t)$ does not affect $A_\dtmor(\ell_{G'})$, contradicting that $A_\dtmor$ has full-rank. So $\dtmor$ has no dangling fibres. Moreover, by Lemma~\ref{lemma-equal-columns} we have that $\dtmor$ has no divalent vertices with $\ch v = 0$.
	\end{proof}
	
	Finally, we prove a combinatorial property that puts the condition $\ch v \ge 1$, for divalent vertices, in a wider context.
	
	\begin{lm} \label{lemma-combi-type-greater-than3}
		If $\dtmor$ is a combinatorial type, then $\ch v + \vale v \ge 3$ for all $v$ in $V(G')$.
	\end{lm}
	
	\begin{proof}
		We only need to show that $\ch v \ge 2$ for monovalent vertices $v$ of $G'$. Let $v$ be monovalent. Note that a monovalent vertex of $G$ is dangling. Since $G$ does not have dangling fibres, pick $A$ in $\inv \dtmor (v)$ with $\vale A \ge 2$. For such $A$ we have that $r_\dtmor(A) = \vale A - 2 - m_\dtmor(A) (\vale v - 2) = \vale A - 2 + m_\dtmor(A)$. Since $v$ is monovalent we have that $m_\dtmor(A) \ge \vale A$, thus $r_\dtmor(A) \ge 2$ and by the RH-condition this implies that $\ch v \ge 2$.
	\end{proof}

	\subsection{Dimension Formula}  \label{subsection-dimension-formula} Let $\dtmor: G \to G'$ be a \DTmorp. We prove a formula relating the number $|E(G')|$, the degree $d = \deg \dtmor$, and the genera of the source and the target of $\dtmor$. Note that~$|E(G')|$ is an upper bound for $\dim C_\dtmor$. Hence, we call this result \emph{the dimension formula}.
	
	\begin{lm}[total change] \label{lemma-total-change} 
		Let $\dtmor \!:\! G \to G'$ be a \DTmorp. Then
		\[
		\sum_{v \in V(G')} \ch v = 2g(G) - 2g(G') \cdot d + 2d - 2.
		\] 
	\end{lm}
	
	\begin{proof}
		This is a consequence of Lemma~\ref{lemma-riemann-hurwitz} (Riemann-Hurwitz formula).
	\end{proof}
	
	\begin{prop}[dimension formula] \label{prop-dim-formula} 
		Let $\dtmor \!:\! G \to G'$ be a \DTmorp. Then
		\[|E(G')| + \sum_{v\in V(G')} (\ch v + \vale v - 3) = 2g(G) - g(G')\cdot(2d-3) + 2d - 5. \] 
	\end{prop}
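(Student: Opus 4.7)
The plan is a direct computation that expands the left-hand side using three elementary identities and shows the $|E(G')|$ terms cancel. The three ingredients I would use are: the handshake identity $\sum_{v \in V(G')} \vale v = 2|E(G')|$; the Euler characteristic relation $|V(G')| = |E(G')| - g(G') + 1$, which follows from the definition of genus in Subsection~\ref{subsection-graphs}; and Lemma~\ref{lemma-total-change} (total change), which evaluates $\sum_v \ch v$ in terms of $g(G)$, $g(G')$ and $d$.

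More concretely, I would first split the sum on the left-hand side as
\[
|E(G')| + \sum_{v \in V(G')} \ch v \;+\; \sum_{v \in V(G')} \vale v \;-\; 3|V(G')|.
\]
Then I would substitute: the second term becomes $2g(G) - 2g(G')d + 2d - 2$ by Lemma~\ref{lemma-total-change}; the third term becomes $2|E(G')|$ by the handshake identity; and $3|V(G')|$ becomes $3|E(G')| - 3g(G') + 3$. After substitution the expression reads
\[
|E(G')| + 2g(G) - 2g(G')d + 2d - 2 + 2|E(G')| - 3|E(G')| + 3g(G') - 3,
\]
and the three $|E(G')|$ contributions cancel, leaving $2g(G) - 2g(G')d + 3g(G') + 2d - 5$, which factors as $2g(G) - g(G')(2d-3) + 2d - 5$, as desired.

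There is no real obstacle: the entire statement is an algebraic rearrangement, and the only nontrivial input is Lemma~\ref{lemma-total-change}, which has already been established via the Riemann-Hurwitz formula. The formula is phrased this way precisely because the $|E(G')|$ on the left is the natural upper bound for $\dim C_\dtmor$, and the sum over $v$ measures how many ``constraints'' $\dtmor$ puts on its target at each vertex; the point of the identity is to express this upper bound in terms of invariants ($g(G)$, $g(G')$, $d$) that are preserved under the deformations considered later in the paper.
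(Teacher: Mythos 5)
Your computation is correct and follows essentially the same route as the paper: both expand the correction term via the handshake identity $\sum_v \vale v = 2|E(G')|$, rewrite $3|E(G')|-3|V(G')|$ as $3g(G')-3$ using the definition of genus, and then invoke Lemma~\ref{lemma-total-change} for $\sum_v \ch v$. The paper's proof is just a more compressed version of the same algebra.
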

	\begin{proof}
		The ingredients are the fact that the sum of valencies gives twice the number of edges, and Lemma~\ref{lemma-total-change} (total change). We compute:
		\begin{align*}
		|E(G')| + \sum_{v\in V(G')} (\ch v + \vale v - 3) &= 3|E(G')| - 3|V(G')| + \sum_{v\in V(G')} \ch v\\
		&= 2g(G) - g(G') \cdot (2d-3) + 2d - 5. \qedhere
		\end{align*}
		
	\end{proof}
	
	We regard the second term on the left hand side in Proposition~\ref{prop-dim-formula} as a \emph{correction term}. Lemma~\ref{lemma-combi-type-greater-than3} gives us a choice in the equivalence class under tropical modification of $\dtmor$ for which the correction term is non-negative. With this we get an upper bound:
	
	\begin{cor} \label{corollary-upperbound-dim-cone}
		Let $\dtmor \!:\! G \to G'$ be a \DTmorp. Then \[\dim C_\dtmor \le 2g(G) - g(G')\cdot(2d-3) + 2d - 5.\]
	\end{cor}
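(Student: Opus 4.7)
The plan is to reduce to a combinatorial type of \DTmors and then read the bound directly off the dimension formula.

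First I would replace $\dtmor$ by its combinatorial type. The cone $C_\dtmor$, the degree $d$, and the genera $g(G)$ and $g(G')$ are all invariants under tropical modification (the cone $C_\dtmor$ is defined as an invariant of the equivalence class in Subsection~3.4, the degree is preserved because deletion of dangling fibres only affects a connected component of genus~$0$ on the source side, and target genus is preserved since tropical modification only removes dangling pieces which are trees). Hence we may assume without loss of generality that $\dtmor$ itself is a combinatorial type of \DTmors in the sense of Definition~3.11. (Strictly speaking, one should check that the combinatorial type obtained by applying Construction~3.7 after deleting dangling fibres from $\dtmor$ has the same cone of sources and genera; this is routine from the invariance statements already set up.)

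Second, I would invoke the upper bound $\dim C_\dtmor \le |E(G')|$. This is immediate from the definition $C_\dtmor = A_\dtmor(\RRo^{E(G')})$, since the linear map $A_\dtmor$ has source $\RR^{E(G')}$, so $\dim C_\dtmor = \rk A_\dtmor \le |E(G')|$.

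Third, I would combine this with Proposition~3.15 (the dimension formula) and Lemma~3.13. The dimension formula reads
\[
|E(G')| + \sum_{v \in V(G')} (\ch v + \vale v - 3) = 2g(G) - g(G')\cdot(2d-3) + 2d - 5,
\]
and Lemma~3.13 says exactly that each summand $\ch v + \vale v - 3$ is nonnegative for a combinatorial type. Therefore
\[
|E(G')| \le 2g(G) - g(G')\cdot(2d-3) + 2d - 5,
\]
and combined with step two this gives the desired bound.

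The only substantive point is the reduction in the first step: one must justify that passing to the combinatorial type leaves the cone of sources and the relevant numerical invariants unchanged. Everything else is a one-line assembly of already established results, so there is no real obstacle.
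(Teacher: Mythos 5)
Your proposal follows essentially the same route as the paper's proof: pass to a combinatorial type $\hat \dtmor : \hat G \to \hat G'$ equivalent to $\dtmor$ under tropical modification, bound $\dim C_{\hat \dtmor} \le |E(\hat G')|$, apply the dimension formula (Proposition~\ref{prop-dim-formula}) together with the nonnegativity of the correction term from Lemma~\ref{lemma-combi-type-greater-than3}, and transfer back using the invariance of $\dim C_\dtmor$ and of the genera.

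One claim in your reduction step is false as stated: the degree is \emph{not} preserved under deletion of dangling fibres --- the paper points this out explicitly in Example~\ref{example-combinatorial-type-of-DTmors}. The discarded components of $\inv\dtmor(\hat G')$ are trees that still surject onto $\hat G'$ and hence carry positive local degree, so in general one only has $\deg \hat\dtmor \le \deg\dtmor$ (vertex indices are unchanged on $\hat G$, but the fibre over a vertex of $\hat G'$ may lose elements). This does not sink the argument in the situations the corollary is used for: when $g(G') \le 1$, in particular for tree targets, the right-hand side $2g(G) - g(G')\cdot(2d-3) + 2d - 5$ is nondecreasing in $d$, so the bound for $\hat\dtmor$ (with $\hat d$) implies the bound for $\dtmor$ (with $d$). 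For $g(G')\ge 2$ the dependence on $d$ reverses sign and the transfer needs a separate word; the paper's own proof is equally silent on this point, so this is a shared elision rather than a defect unique to your write-up.
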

	
	\begin{proof}
		Let $\hat \dtmor \!:\! \hat G \to \hat G'$ be a combinatorial type of \DTmors that is equivalent under tropical modification to~$\dtmor$. By Lemma~\ref{lemma-combi-type-greater-than3} and the dimension formula we get
		\[\dim C_{\hat \dtmor} \le |E(\hat G')| \le 2g(\hat G) - g(\hat G')\cdot(2d-3) + 2d - 5.\]   
		By the properties of tropical modification we have that $\dim C_{\hat \dtmor} = \dim C_{\dtmor}$, $g(\hat G) = g(G)$, and $g(\hat G') = g(G')$, proving the desired bound. 
	\end{proof}
	
	\subsection{Tree gonality} \label{subsection-gonality} To close this section we use the bound given by the dimension formula to reprove a result on tree gonality that appeared in \cite{cd18}. 
	We go on to outline what is the central problem for the rest of this article, and our strategy to tackle it.
	
	\begin{de}
		The \emph{tree gonality} of a metric graph $\mG$ is the minimum degree of a tropical morphism from some tropical modification of $\mG$ to some metric tree. 
	\end{de}
	
	Let $\dtmor \!:\! G \to T$ be a \DTmor to a tree. Every metric graph $\mG$ in $C_\dtmor$ has tree gonality at most $\deg \dtmor$. From now on we only work with \DTmors $\dtmor : G \to T$ to trees. For such maps Corollary~\ref{corollary-upperbound-dim-cone} gives that $\dim C_\dtmor \le 2g + 2d - 5$. Thus:
	
	\begin{cor} \label{corollary-gonality-bound}
		Let $g$, $d$ be positive integers. The locus of metric graphs in $\MTrop g$ with tree gonality at most $d$ has dimension at most $\min(2g + 2d - 5, 3g-3)$. 
	\end{cor}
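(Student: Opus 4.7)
The plan is to express the locus in $\MTrop g$ of metric graphs with tree gonality at most $d$ as a finite union of images of cones $C_\dtmor$, where each $\dtmor$ is a combinatorial type of discrete tropical morphism from a genus-$g$ graph to a tree with degree at most $d$, and then bound the dimension of each such cone by $2g+2d-5$ using Corollary~\ref{corollary-upperbound-dim-cone}, while noting the ambient bound $3g-3$ from $\dim \MTrop g$.

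More concretely, suppose $\mG$ has tree gonality at most $d$. By definition there is a tropical modification $\mG'$ of $\mG$ admitting a tropical morphism $\tmor: \mG' \to \mT$ to a metric tree with $\deg \tmor \le d$. Passing to the combinatorial type $\hat \dtmor : \hat G \to \hat T$ of $\tmor$ from Definition~\ref{definition-combinatorial-type-tmor}, we get a combinatorial type of \DTmors whose source has genus $g$ and target is a tree, with $\deg \hat \dtmor \le d$. The point in $\MTrop g$ representing $\mG$ then lies in the image of $C_{\hat \dtmor}$. Applying Corollary~\ref{corollary-upperbound-dim-cone} with $g(\hat T) = 0$ yields
\[ \dim C_{\hat \dtmor} \;\le\; 2g(\hat G) - 0\cdot(2d-3) + 2d - 5 \;=\; 2g + 2d - 5. \]

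Next I would argue that the collection of combinatorial types $\hat \dtmor : \hat G \to \hat T$ arising this way is finite. By the dimension formula (Proposition~\ref{prop-dim-formula}) applied to a combinatorial type and Lemma~\ref{lemma-combi-type-greater-than3}, the target tree $\hat T$ has at most $2g + 2d - 5$ edges, so there are only finitely many choices for $\hat T$ up to isomorphism. For each such tree, the number of \DTmors onto it of degree at most $d$ with source of genus $g$ is finite (there are finitely many ways to assign preimages and indices satisfying balancing, non-degeneracy and the RH-condition, given that every fibre has size at most $d$ and every edge carries index at most $d$). Hence the locus is a finite union of images of cones of dimension at most $2g+2d-5$.

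The final step is to combine this with the fact that the locus is contained in $\MTrop g$, which is pure of dimension $3g-3$ (established at the end of Section~\ref{section-moduli-space-of-metric-graphs}). Therefore the locus has dimension at most $\min(2g+2d-5,\, 3g-3)$. The only mildly delicate point in the argument is the finiteness statement for combinatorial types, which requires one to use the dimension formula to bound $|E(\hat T)|$ before combining with the degree bound; the rest is an immediate application of Corollary~\ref{corollary-upperbound-dim-cone}.
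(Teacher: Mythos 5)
Your proposal is correct and follows essentially the same route as the paper's proof: pass to the combinatorial type of the witnessing tropical morphism, bound $\dim C_{\hat\dtmor}$ by $2g+2d-5$ via Corollary~\ref{corollary-upperbound-dim-cone} with $g(\hat T)=0$, establish finiteness of the relevant combinatorial types by bounding $|E(\hat T)|$ (and hence $|E(\hat G)|\le d\cdot|E(\hat T)|$), and intersect with the ambient bound $3g-3$. The only cosmetic difference is that you phrase the finiteness of morphisms onto a fixed tree via counting index assignments, where the paper bounds $|E(G)|$ directly, but these amount to the same observation.
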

	
	\begin{proof}
		Let $\mH$ be in $\MTrop g$, $\mG$ a tropical modification of $\mH$, and $\tmor : \mG \to \mT$ a tropical morphism to a tree with $\deg \tmor \le d$. Then $\mH \in C_\dtmor$, where $\dtmor : G \to T$ is the combinatorial type of $\tmor$. By the preceding discussion and since $\dim \MTrop g = 3g-3$ we get $\dim C_\dtmor \le \min(2g + 2d - 5, 3g-3)$. Note also that $|E(T)| \le \min(2g + 2d - 5, 3g-3)$, and that $|E(G)| \le  \deg \dtmor \cdot |E(T)| \le d \cdot |E(T)|$, so there are finitely many possibilities for $G$, $T$, and consequently also for $\dtmor$. This gives the result.
	\end{proof}
	
	Since $3g-3 \le 2g + 2d - 5$ implies that $\lceil g/2+1 \rceil \le d$, we get that most metric graphs in $\MTrop g$ have tree gonality at least $\lceil g/2+1 \rceil$. This is one of the two inequalities that comprises Theorem~1 in \cite{cd18}. The other inequality, namely that metric graphs have gonality at most $\lceil g/2+1 \rceil$, was shown constructively for certain families of graphs, one family for each combinatorial type:
	
	\begin{thm} \label{theorem-cd}
		Let $H$ be a trivalent combinatorial type and $g = g(H)$. Then there exists a \DTmor $\dtmor$ such that $\deg \dtmor =  \lceil g/2+1 \rceil$, $H(\dtmor) = H$ and $\dim C_\dtmor = 3g - 3$.
	\end{thm}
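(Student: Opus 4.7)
The plan is to prove the theorem constructively by induction on the genus $g$, producing for each trivalent combinatorial type $H$ both the target tree $T$ and the \DTmor $\dtmor \colon G \to T$ with $H(\dtmor) = H$ explicitly, and then verifying that the edge-length map $A_\dtmor$ has rank $|E(T)|$ equal to $3g-3$. By the dimension formula (Proposition \ref{prop-dim-formula}) applied with $g(T) = 0$ and $d = \lceil g/2 \rceil + 1$, one computes that a combinatorial type of \DTmor{} with these invariants has exactly $|E(T)| = 3g-3$ edges with all correction terms $\ch v + \vale v - 3$ equal to zero. So the strategy is really to produce a $\dtmor$ that is already a combinatorial type in the sense of Definition \ref{definition-combinatorial-type}, hits the degree bound, and whose edge-length map is surjective.

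For the base case $g=2$, the unique trivalent combinatorial type is the theta graph, and one uses the well-known ``hyperelliptic'' folding onto a tripod with degree $2$; a direct check verifies harmonicity, the RH-condition, and full-rank of $A_\dtmor$. For the inductive step, I would take a trivalent $H$ of genus $g$ and locate a suitable local configuration (e.g.\ a pair of parallel edges or a theta-subgraph, whose existence for trivalent graphs is guaranteed combinatorially) whose ``contraction'' or ``pruning'' yields a trivalent $H'$ of genus $g-1$ or $g-2$. By induction, we have a \DTmor{} $\dtmor' \colon G' \to T'$ of degree $\lceil g'/2 \rceil + 1$ with full-rank edge-length map. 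The plan is to glue onto $\dtmor'$ a small gadget realizing the removed configuration, obtaining $\dtmor \colon G \to T$, while bumping the degree by $0$ or $1$ exactly when parity requires, so that $\deg \dtmor = \lceil g/2 \rceil + 1$.

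The verification then splits into three checks. Harmonicity and RH at the newly introduced vertices follow from choosing the index map of the gadget to mesh with $\dtmor'$ at the gluing locus; this is a local computation at finitely many vertices. Non-degeneracy follows because the gadget is designed to avoid contracted edges. The dimension claim requires showing that the new edges of $T$ added by the gadget correspond to independent new rows/columns of $A_\dtmor$: here one expresses $A_\dtmor$ in block form and argues that the block corresponding to the gadget has full rank, so that inductively $\rk A_\dtmor = \rk A_{\dtmor'} + (\text{new edges of } T)$.

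The main obstacle is not the algebra but the combinatorics: one must exhibit, for \emph{every} trivalent $H$, a local configuration whose removal leaves a trivalent graph to which induction applies, and then describe the gluing gadget in enough generality to cover all ways $H$ can be extended back from $H'$. Because trivalent graphs come in many isomorphism types, this forces a case analysis over the possible local configurations at the gluing locus (which explains the ``16 cases'' mentioned in the introduction). Once the case analysis is set up, both the degree count and the rank check are uniform, and full-rank of the edge-length map for $\dtmor$ follows from the inductive hypothesis plus the explicit description of the gadget. Since the essential reliance is on the constructions of \cite{cd18}, in practice I would invoke their result rather than redo this analysis from scratch.
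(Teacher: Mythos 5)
The paper does not actually prove Theorem~\ref{theorem-cd}: immediately after the statement it says the result ``is proven constructively in Section~4 of \cite{cd18}'', and that citation is the entirety of the paper's own argument. Your closing move --- invoking \cite{cd18} rather than redoing the construction --- is therefore exactly what the paper does, and to that extent the proposal matches.

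The inductive sketch you give before that, however, has two concrete problems. First, your opening computation is wrong for odd genus: with $d=\lceil g/2\rceil+1$ and $g(T)=0$, the dimension formula gives $|E(T)|+\sum_v(\ch v+\vale v-3)=2g+2d-5$, which equals $3g-3$ only when $g$ is even; for odd $g$ it equals $3g-2$, so the target tree may have $3g-2$ edges, the correction terms need not all vanish, and $A_\dtmor$ is then a $(3g-3)\times(3g-2)$ matrix that must merely be surjective rather than square and nonsingular. The paper is careful about exactly this distinction: Definition~\ref{def-full-dim} forces even genus, and the text notes that the morphisms of Theorem~\ref{theorem-cd} are full-dimensional only ``when $g$ is even''. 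Any inductive verification of the rank claim would have to track this parity asymmetry, which your block-matrix argument as stated does not. Second, the ``16 cases'' you cite as the source of the case analysis belong to the deformation procedure of this paper (the regrowing of $w_0$ in Section~\ref{sec-constructions}), not to the construction of the initial families in \cite{cd18}; attributing them to the gadget-gluing step conflates two different parts of the overall strategy. Neither issue is fatal to the theorem --- the external reference carries the load --- but the sketch as written would not compile into a self-contained proof without repairing the odd-genus bookkeeping.
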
		
	
	Theorem~\ref{theorem-cd} is proven constructively in Section~4 of \cite{cd18}. When $g$ is even, these families have the following property: 
	
	\begin{de} \label{def-full-dim} A \DTmor $\dtmor$, with  $g = g(H(\dtmor))$, is \emph{full dimensional} if \[\dim C_\dtmor = 3g - 3 = 2g + 2d - 5.\]
	\end{de}
	
	The pairwise equalities in the definition of full-dimensional have the following consequences:
	\begin{itemize}
		\item $\dim C_\dtmor = 3g-3$ implies that $H(\dtmor)$ is trivalent and the tropical morphism moves in a space of the right dimension, a necessary condition to hope to realize all sources in $C_{H(\dtmor)}$.
		\item $\dim C_\dtmor = 2g + 2d - 5$, $\dim C_\dtmor = 3g-3$, and the dimension formula together imply that $|E(T)| = 3g-3$, that $\dtmor$ has full-rank, and $\vale v + \ch v = 3$ for all $v$ in $V(T)$.
		\item $3g - 3 = 2g+2d - 5$ implies that $g$ is even and $d = g/2 + 1$.
	\end{itemize}
	
	These observations give a practical criteria to check whether $\dtmor$ is full-dimensional:
	
	\begin{lm} \label{lemma-practical-criteria}
		A \DTmor $\dtmor$ is full-dimensional if and only if $A_\dtmor$ is a nonsingular $(3g-3) \times (3g-3)$ matrix.
	\end{lm}
	
	\begin{ex}
		Lemma~\ref{lemma-practical-criteria} and the calculation done in the fifth case of Example~\ref{ex-three-cycles-loop} shows that the map $\dtmor_A$ in Example~\ref{example-discrete-tropical-morphism} is full-dimensional.
	\end{ex}
	
	Let $H$ be a trivalent combinatorial type with even genus, and $\dtmor$ the full-dimensional \DTmor given by Theorem~\ref{theorem-cd}. Our aim here, in Part~I of this series of two articles is to introduce a deformation procedure that, when applied to $\dtmor$, yields full-dimensional \DTmors $\Aq \dtmor 1$, $\Aq \dtmor 2$, and so on, with cones $\Aq C 1$, $\Aq C 2$, and so on, which together cover $C_H$, showing that all metric graphs with combinatorial type $H$ have gonality at most $g/2 +1$. For the remaining we direct our efforts to the construction of this deformation procedure. We are not limited by the condition that $g$ is even, since the odd genus case follows from the even genus case, see Subsection~\ref{section-proof-main-result}.

	As a sneak peek, we mention that in Part~II of this series of two articles the main problem is the study of the set $\mathbb{FD}_g$ of full-dimensional \DTmors whose source has genus~$g$. A full-dimensional \DTmor has full-rank, so it is a combinatorial type. By the argument given at the end of the proof of Corollary~\ref{corollary-gonality-bound} we have that $\mathbb{FD}_g$ is finite. 
	
	We glue together the cones corresponding to elements in $\mathbb{FD}_g$ in a construction akin to that of $\MTrop g$ and obtain a space $\TM d g$. There is a projection $\Pi$ from $\TM d g$ to $\MTrop g$ where a tropical morphism $\tmor$ is mapped to its source. We show that $\Pi$ is a surjection,  that $\TM d g$ is connected in codimension-1, and that the fibres of $\Pi$ have Catalan-many points when counted with certain multiplicity. This opens the possibility of proving the upper bound on gonality without using Theorem~\ref{theorem-cd}: instead, we only need to study the fibre $\inv \Pi(\mH)$ for one particular kind of metric graph $\mH$, and the rest is deformation.

	\section{gluing datums} \label{sec-gluing-datum} 
	Let $\dtmor \!:\! G \to T$ be a \DTmor to a tree. That the fibre $\inv \dtmor (x)$ has $\deg \dtmor$ elements counted with multiplicity, and that $\dtmor$ is a graph homomorphism, leads us to regard $G$ as the graph resulting from taking $\deg \dtmor$ copies of $T$, and identifying together certain vertices and edges between copies. In this view the index map $m_\dtmor$ records how many copies of $T$ were glued together in a particular place. We make this notion precise with the combinatorial gadget of a \emph{gluing datum}. 
	
	
	\subsection{The gluing datum} \label{subsection-gluing-datum}  gluing datums are a tool to ease the visualization of \DTmorsp; to do book-keeping on edges of $G$ in the process of deformation of \DTmorsp{} (see Sections~\ref{sec-limits} and~\ref{sec-constructions}); and to write computer programs (see \cite{dra} for code). 

	\begin{de}[gluing datum] Let $T$ be a finite tree, $d$ be a positive
		integer, and $\sim$ be an equivalence relation on $ W = T \times [d]$, where $[d] = \{1,\dots,d\}$. Write $\class x k$ for the classes of $W/\sim$, where $x \in V(T) \cup E(T)$. The triple $(T, d, \sim)$ is a  \emph{gluing datum} when~$\sim$ satisfies these properties:
		\begin{enumerate}
			\item Verticality: If $(x, i) \sim (x', i')$, then $x = x'$. Each $x$ in $T$ defines a relation $\sim_x$ on $[d]$ with $i \sim_x j$ if $(x, i) \sim (x, j)$. We say that $\sim_x$ is the \emph{gluing relation above} $x$. By verticality these relations determine~$\sim$.
			
			\item Refinement: If $v$ is in $V(T)$, and $t$ is incident to $v$, then $\sim_t$ is a refinement of $\sim_v$.
			\item Connectedness: For any two classes $\class {x} {k}$ and $\class {x'} {k'}$ there is a sequence $\class {x_0} {i_0} = \class {x} {k}, \class {x_1} {i_1}, \dots, \class {x_r} {i_r} = \class {x'} {k'}$ such that for each $q$ the elements $x_q$, $x_{q+1}$ are incident (so one of them is a vertex and the other an edge), and there is at least one $j$ such that $(x_q, j) \in \class {x_q} {i_q}$ and $(x_{q+1}, j) \in \class {x_{q+1}} {i_{q+1}}$.  
			\item Riemann-Hurwitz condition: For $v$ in $V(T)$ and $i$ in $[d]$ let $A \subset [d]$ be the class of~$i$ under the relation $\sim_v$. Let $l = \vale v$ and $\Neigh v = \{t_1, \ldots, t_l\}$.
			By the refinement property the relation $\sim_{t_q}$ above $t_q$ partitions $A$ into $k_q$ sets.
			Then
			\[(k_1 + k_2 + \dots + k_l) - 2 \ge |A| \cdot (l-2). \]
		\end{enumerate}
	\end{de}

	\begin{figure}
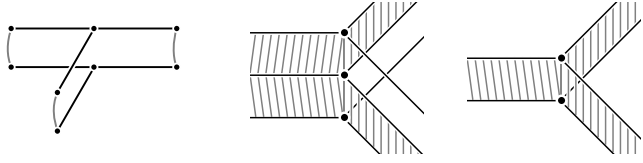

		\includegraphics[scale=1.3]{\figsdir/081.pdf} \hspace{1em}
		\includegraphics[scale=1]{\figsdir/083.pdf} \hspace{1em}
		\includegraphics[scale=1]{\figsdir/084.pdf} 
		\caption{\label{fig-gluing} On the left, a gluing datum giving rise to the theta graph. In the centre, an allowed set of gluing relations. On the right, a set of gluing relations not allowed because of the RH-condition.}
	\end{figure}
	
	Let $M = (T, d, \sim)$ be a gluing datum. Write $|\class x k|$ for the cardinality of the equivalence class $\class x k$. We abuse notation and say $i$ in $\class x k$ to mean $(x,i)$ in $\class x k$. By verticality this causes no confusion. A consequence is to regard $\class x k$ as a subset of~$[d]$, for the purpose of comparing classes. We do, but to avoid confusion it is pointed out in every such instance of this use that $\class x k$ is momentarily being regarded as a subset of~$[d]$, instead of a class of $W/\sim$.   
	
	We call $T$ the \emph{base tree} of $M$. We visualize gluing datums by drawing $d$ copies of $T$ on top of each other, and imagining all of them above $T$. Curved lines and shaded regions indicate where vertices or edges are identified. A class $\class x k$ is said to be \emph{above} $x$. By the verticality condition, one can only glue elements of $W$ that are above the same element of the base tree, see Figure~\ref{fig-gluing}.

	The quotient $G = W/\sim$ has the following natural structure of a loopless graph: the classes $\class x k$ in $W/\sim$ with $x$ in $V(T)$ are the vertices of $G$; those with $x$ in $E(T)$ are the edges of $G$; an edge $\class t i$ in $E(G)$ is incident to a vertex $\class v k$ in $V(G)$ if and only if $t$ and $v$ are incident in $T$ and $i \in \class v k$. This is a well defined graph, by the verticality and the refinement properties. It is connected, by the connectedness property. It has no loops by the verticality property and the fact that $T$ is a tree. Consider the natural map $\varphi_M: G \to T$ given by $\varphi_M \class x k = x$, and the index map $m_{\dtmor_M} \class x k = |\class x k|$. 
	
	\begin{lm}[Prop. 5, \cite{cd18}] For a gluing datum $M = (T, d, \sim)$, the map $\varphi_M \class x k = x$ with index map $m_{\dtmor_M} \class x k = |\class x k|$ is a \DTmor of degree $d$.
	\end{lm} 
	
	\begin{proof}	
		We prove that $\dtmor$ is a \DTmor by showing the balancing condition; the other requirements are immediate.
		Let $A$ be a vertex of $G$. Set $v = \dtmor(A)$. Fix $t$ in $\Neigh v$. Recall that by the refinement property the relation $\sim_t$ above $t$ partitions $A$. Thus, if $e_1, \dots, e_r$ are the classes in $E(A)$ above $t$, then $|A| = |e_1| + \dots + |e_r|$. Since $m_\dtmor(e) = |e|$ we get that
		\[m_\varphi(A) = |A| =  \sum_{\substack{e \in \Neigh A \\ \varphi(e) = t}} m_\varphi(e). \qedhere \] 
	\end{proof}	
	
	We illustrate the previous concepts in the following example.

	\begin{ex} \label{ex-running} 
		Let $M = (T,3,\sim)$ be a gluing datum with $T$ and $\sim$ as in Figure~\ref{figure-3}.
		
		\noindent	
		\begin{minipage}{1\textwidth}
			\begin{minipage}{.32\textwidth}
				\centering
				\begin{overpic}{\figsdir/056.pdf}    
					\put (-3,35) {\scalebox{0.8}{$v_1$}}
					\put (-3,-2) {\scalebox{0.8}{$v_2$}}
					\put (14,21) {\scalebox{0.8}{$v_3$}} 
					\put (22,4) {\scalebox{0.8}{$v_4$}} 
					\put (40,21) {\scalebox{0.8}{$v_5$}}
					\put (56,21) {\scalebox{0.8}{$v_6$}}
					\put (71,21) {\scalebox{0.8}{$v_7$}} 
					\put (97,35) {\scalebox{0.8}{$v_8$}} 
					\put (97,-2) {\scalebox{0.8}{$v_9$}}
					
					\put (64,14) {\scalebox{0.8}{$e$}}
				\end{overpic} 
			\end{minipage}\hspace{1em}
			\begin{minipage}{.24\textwidth} 
				\begin{align*}
				&\sim_{v_1} = \sim_{v_2} = \{ \{1,2\},\{3\} \}\\
				&\sim_{v_4} = \sim_e = \{\{1\},\{2,3\}\}, \\
				&\sim_{v_1} = \sim_{v_2} = \{ \{1,3\},\{2\} \},\\
				&\text{else } \sim_x = \{ \{1\},\{2\},\{3\}\}.
				\end{align*}
			\end{minipage} \hspace{1em}
			\begin{minipage}{.30\textwidth} 
				
				\begin{overpic}{\figsdir/263.pdf}    
					\put (15,33) {\scalebox{0.8}{$A$}}
					\put (40,25) {\scalebox{0.7}{$B$}}
					\put (56,11) {\scalebox{0.7}{$C$}}
					\put (80,13) {\scalebox{0.7}{$D$}}
					\put (80,46) {\scalebox{0.7}{$E$}}
					\put (15,46) {\scalebox{0.7}{$F$}} 
				\end{overpic} 
			\end{minipage}
			\\
			\noindent \begin{minipage}[t]{.32\textwidth}
				\centering
				$T$			
			\end{minipage}\hspace{2em}
			\begin{minipage}[t]{.25\textwidth} 
				\centering
				$\sim$			
			\end{minipage}\hspace{1em}
			\begin{minipage}[t]{.32\textwidth} 
				\begin{center} 
					\hspace{1em} A visualization of $M$\end{center}
			\end{minipage}
			
			\captionof{figure}{} \label{figure-3}
		\end{minipage}
		
		\vspace{1em}
		
		In the visualization we have rendered dangling edges with dotted lines and lighter colour. The copies of $T$ are numbered from top to bottom. We also have labelled with $A, B, \dots, F$ the vertices of $G$ that have non-dangling valency equal to 3. The other non-dangling vertices have non-dangling valency equal to 2. Note that the map $\dtmor_M$ is isomorphic as a \DTmor to the map $\dtmor_A$ of Example~\ref{example-discrete-tropical-morphism} (see also Example~\ref{example-isomorphic-gluing-datums}).
	\end{ex}

	We denote by $\Neighnd{A}$ the set of non-dangling edges of $G$ incident to a vertex $A$ and define the \emph{non-dangling valency} $\nddeg A$ as $|\Neighnd{A}|$. 
	Observe that $\nddeg A = 0$ if and only if $A$ is dangling, otherwise $\nddeg A \ge 2$. In the visualization of $M$ in Example~\ref	{ex-running} the darker coloured edges make up the following graph:
	
	\begin{cons}
		Let $H(M)$ be the following graph: $V(H(M))$ is the set of vertices $A$ of $G$ such that $\nddeg A \ge 3$; and $E(H(M))$ is the set of paths of $G$ whose ends are in $V(H(M))$ and interior vertices have non-dangling valency equal to~2.
	\end{cons}
	
	Recall that $H(\dtmor_M)$ is the combinatorial type of the source of the \DTmor $\dtmor_M$ associated to $M$. It is straightforward to see that $H(M)$ and $H(\dtmor_M)$ are isomorphic. Observe also that the edge $e$ of $G$ is either dangling, or there is a unique $h(e)$ in $E(H(M))$ such that $e \in h(e)$. We say that $h$ in $E(H(M))$ \emph{passes through} a class $\class x k$ of $G$ if $\class x k \in h$ and $\class x k$ is not and end of~$h$. If $h$ passes through $\class x k$, then we say that $h$ \emph{passes above} $x$. This gives a natural way to also visualize $H(M)$ above~$T$, as in Example~\ref{ex-running}.
	
	\begin{ex}	\label{example-gluing-datum}	
		Figure~\ref{figure-5} illustrates $G$ and $H(M)$ for the gluing datum $M$ of Example~\ref{ex-running}. We kept the labelling for vertices with non-dangling valency greater than~2 from the visualization of $M$ in Example~\ref{ex-running}, for ease of comparison. 
		
		\vspace{1em}
		
		\noindent	
		\begin{minipage}{0.95\textwidth}
			\begin{minipage}[t]{0.6\textwidth}
				\centering
				\begin{overpic}[scale=0.6]{\figsdir/264.pdf}    
					\put (50,82) {\scalebox{0.8}{$A$}}
					\put (68,49) {\scalebox{0.8}{$B$}}
					\put (52,23) {\scalebox{0.8}{$C$}}
					\put (17,23) {\scalebox{0.8}{$D$}}
					\put (0,49) {\scalebox{0.8}{$E$}}
					\put (19,82) {\scalebox{0.8}{$F$}}
				\end{overpic} 
			\end{minipage}
			\begin{minipage}[b]{.35\textwidth} 
				\begin{overpic}[scale=0.6]{\figsdir/266.pdf}    
					\put (53,84) {\scalebox{0.8}{$A$}}
					\put (71,51) {\scalebox{0.8}{$B$}}
					\put (54,22) {\scalebox{0.8}{$C$}}
					\put (18,22) {\scalebox{0.8}{$D$}}
					\put (0,51) {\scalebox{0.8}{$E$}}
					\put (19,84) {\scalebox{0.8}{$F$}}
				\end{overpic} 
			\end{minipage}
			\\
			\noindent 
			\begin{minipage}[t]{.6\textwidth} 
				\hspace{9.2em}
				$G$			
			\end{minipage}\hspace{1em}
			\begin{minipage}[t]{.35\textwidth} 
				\hspace{3.5em}
				$H(M)$
			\end{minipage}
			\captionof{figure}{} \label{figure-5}
			
		\end{minipage}
		
	\end{ex}
	
	\begin{re} \label{rem-cd-glue}
		Unlike our approach here where a gluing datum gives rise to a \DTmor $\dtmor$, the definition of gluing datum in \cite{cd18} gives rise to a tropical morphism $\tmor : \mG \to \mT$. To go from a gluing datum as in \cite{cd18} to one as in our setting we need to remove the metric information by taking a model $T$ of $\mT$ where for every edge $t$ of $T$ the relation above each interior point $x$ in $\mT_t$ equals some fixed relation $\sim_t$. In the terminology of the Definition~4 of \cite{cd18} a possibility for such a model is the set of points from the essential model of $\mT$ together with the image under $\tmor$ of the set of monovalent points in the metric forest  \[\{u \in \mG \, | \, \exists x \in \mT \text{ with } \psi_i(x) \sim \psi_j (x)  \text{ for distinct } i, j \text{; and } u = \psi_i(x)/\sim \},\] where the map $\psi_i$ sents $\mT$ to its $i$-th copy.
	\end{re}
	
	\begin{cons}  \label{cons-dtmor-to-gd}  Given a degree-$d$ \DTmor $\dtmor:G \to T$ we construct a gluing datum $M$ with base tree $T$ such that $\dtmor$ is isomorphic to $\dtmor_M$. We use four steps:
		\begin{enumerate}[(1)]
			\item Choose a leaf $v$ of $T$ and a map $\chi_v : \inv \dtmor(v) \to \mathcal P([d])$, where $\mathcal P([d])$ is the power set of $\{1,\dots,d\}$, such that $|\chi_v(A)| = m_\varphi(A)$ for $A \in \inv \dtmor(v)$ and $\im(\chi_v)$ is a partition of $[d]$.
			\item Choose incident $x$, $x'$ in $T$ such that a map $\chi_{x}$ has been chosen for $x$ and no map has been chosen for $x'$ yet.
			\item Choose a map $\chi_{x'} : \inv \dtmor(x') \to \mathcal P([d])$ such that $|\chi_{x'}(X')| = m_\varphi(X')$ for $X' \in \inv \dtmor(x')$, $\im(\chi_{x'})$ is a partition of $[d]$; and for every pair of incident elements $(X',X)$ in $\inv \dtmor(x') \times \inv \dtmor(x)$ we have that $\chi_{x'}(X') \subset \chi_{x}(X)$ if $x'$ is an edge, or $\chi_{x'}(X') \supset \chi_{x}(X)$ otherwise.
			\item Repeat step 2 and 3 until every $x$ in $T$ has a $\chi_x$.
		\end{enumerate}
		Take $M = (T,d,\sim)$, where $\sim_x$ is defined by the partition $\im(\chi_{x})$.
	\end{cons}
	
	In Step~(3) of Construction~\ref{cons-dtmor-to-gd} the choice is possible since $\dtmor$ satisfies the balancing condition. It is straightforward to check that the gluing datum $(T,d,\sim)$ has the properties we desire. This construction is essentially the one given in  Proposition 6 of~\cite{cd18}.

	\subsection{Isomorphism classes of \DTmors and gluing datums} \label{subsection-isomorphism-classes-of-dtmors-and-gd} 
	The choices that are crucial in Construction~\ref{cons-dtmor-to-gd} happen at Steps~(1) and (3). We identify all the possibilities using a notion of isomorphism of gluing datums. It turns out that with the right notion of isomorphism there is a bijection between isomorphism classes of \DTmors and isomorphism classes of gluing datums. This completes the setup to study \DTmors via gluing datums. 
	
	Observe that applying a fixed permutation $\pi$ of $[d]$ on all relations $\sim_x$ gives a different gluing datum, but does not change the isomorphism type of~$G$, nor the index map for the edges of~$G$, nor which edges of $G$ lie above a particular edge of $T$. We call this operation a \emph{tree-swap}. It enables us to relate two distinct choices done at Step~(1) of Construction~\ref{cons-dtmor-to-gd}.
	
	Choose a vertex $v$ of $T$, and let $S$ be the vertex set of one of the connected components of $T \setminus \{v\}$. Let $T_{\text{br}}$ be the graph induced by the vertex set $S \cup \{v\}$. We call it a \emph{branch} of~$T$. Choose $i$ and $j$ such that $i \sim_v j$ and swap $i$ with $j$ in all relations above $T_{\text{br}}$. We call this a \emph{branch-swap}. A series of branch-swaps enables us to relate two distinct choices done at Step~(3) of Construction~\ref{cons-dtmor-to-gd}.
	This motivates the notion of isomorphism of gluing datums. In the following definition we write $\class x i_M$ for a class of $M$, to avoid confusions, and in general we regard $\class x i_M$ as a subset of $[d]$.
	
	\begin{de}[gluing datum isomorphism] \label{definition-gd-iso} An isomorphism from $M = (T, d, \sim)$ to $M' = (T', d, \sim')$ consists of a graph isomorphism $\tau:T \to T'$, and for each $x$ in $T$ a permutation $\pi_x$ of $[d]$ satisfying two properties:
		\begin{enumerate}
			\item class-preserving: $\pi_x \left( \class  x i_M \right)$ and $\class {\tau(x)} {\pi_x(i)}_{M'}$ are equal for all $x$ in $T$, $i$ in $[d]$.
			\item incidence-preserving: for all $v$ in~$V(T)$, $t$ in $E(v)$, and $\ i$ in $[d]$ we have that $\class t j_M \subset \class v i_M$ if and only if $\class {\tau(t)} {\pi_t(j)}_{M'} \subset \class {\tau(v)} {\pi_v(i)}_{M'}$.  
		\end{enumerate}	
	\end{de}

	\begin{ex} \label{example-isomorphic-gluing-datums} 
		Figure~\ref{figure-6} presents three isomorphic gluing datums. On the left, $M$ from Example~\ref{ex-running}. Next, in $M'$ we swapped the middle and the bottom tree. Finally, in $M''$ we also swapped the middle and the bottom tree, but only above the branch that is right to the vertex $C$. They all give rise to isomorphic \DTmorsp.
		
		\vspace{0.7em}
		
		\noindent	
		\begin{minipage}{\textwidth}
			\begin{minipage}{.3\textwidth}
				\centering
				\begin{overpic}{\figsdir/263.pdf}    
					\put (15,33) {\scalebox{0.8}{$A$}}
					\put (40,25) {\scalebox{0.7}{$B$}}
					\put (56,11) {\scalebox{0.7}{$C$}}
					\put (80,13) {\scalebox{0.7}{$D$}}
					\put (80,46) {\scalebox{0.7}{$E$}}
					\put (15,46) {\scalebox{0.7}{$F$}} 
				\end{overpic} 
			\end{minipage}\hspace{1em}
			\begin{minipage}{.3\textwidth} 
				\centering
				\begin{overpic}{\figsdir/268.pdf}    
					\put (13,11) {\scalebox{0.8}{$A$}}
					\put (40,11) {\scalebox{0.7}{$B$}}
					\put (56,11) {\scalebox{0.7}{$C$}}
					\put (80,25) {\scalebox{0.7}{$D$}}
					\put (80,46) {\scalebox{0.7}{$E$}}
					\put (15,46) {\scalebox{0.7}{$F$}} 
				\end{overpic} 
			\end{minipage} \hspace{1em}
			\begin{minipage}{.3\textwidth} 
				\centering
				\begin{overpic}{\figsdir/267.pdf}    
					\put (15,33) {\scalebox{0.8}{$A$}}
					\put (40,25) {\scalebox{0.7}{$B$}}
					\put (56,11) {\scalebox{0.7}{$C$}}
					\put (80,25) {\scalebox{0.7}{$D$}}
					\put (80,46) {\scalebox{0.7}{$E$}}
					\put (15,46) {\scalebox{0.7}{$F$}} 
				\end{overpic} 
			\end{minipage}
			\\
			\noindent \begin{minipage}[t]{.3\textwidth}
				\centering
				$M$			
			\end{minipage}\hspace{1em}
			\begin{minipage}[t]{.3\textwidth} 
				\centering
				$M'$			
			\end{minipage}\hspace{1em}
			\begin{minipage}[t]{.3\textwidth} 
				\centering
				$M''$
			\end{minipage}
			
			\captionof{figure}{} \label{figure-6}
		\end{minipage}

	\end{ex}
	\begin{lm} \label{lemma-iso-classes-dtmor-gd} The  isomorphism classes of gluing datums are in bijection with the isomorphism classes of \DTmors from a graph to a tree.
	\end{lm}
	
	\begin{proof}
		Let $M$, $M'$ be gluing datums giving rise to graphs $G$, $G'$ respectively, and $\dtmor = \dtmor_M$, $\dtmor' = \dtmor_{M'}$. First suppose that $\tau$, $\{\pi_x\}_{x \in T}$ is an isomorphism from $M$ to $M'$. Consider the natural map $\gamma \!:\! G \to G'$ given by $\gamma ( \class x k_M ) = \class {\tau(x)} {\pi_x(k)}_{M'} $. By the class-preserving property we have that $\gamma$ is bijective. By the incidence-preserving property we have that both $\gamma$, $\inv \gamma$ are graph homomorphisms. We claim that $(\gamma, \tau)$ is an isomorphism from $\dtmor$ to $\dtmor'$. Diagram~\ref{diagram-iso}.a commutes because
		\begin{align*}
		\dtmor' \circ \gamma \left( \class x k_M \right) &= \dtmor' \left(\class {\tau(x)} {\pi_x(k)}_{M'}\right)  = \tau(x),\\
		\tau \circ \dtmor \class x k &= \tau(x).
		\end{align*}
		Diagram~\ref{diagram-iso}.b commutes because by class-preserving we get that
		\begin{align*}
		m_{\dtmor} \left( \class x k_M \right) &= |\class x k_M | = |\class {\tau(x)} {\pi_x(k)}_{M'}|, \\
		m_{\dtmor'} \circ  \gamma \left( \class x k_M \right) &= m_{\dtmor'} \left( \class {\tau(x)} {\pi_x(k)}_{M'} \right)= |\class {\tau(x)} {\pi_x(k)}_{M'}|. 
		\end{align*}

		Now suppose that $\gamma$, $\tau$ is an isomorphism from $\dtmor$ to $\dtmor'$. Since $m_\dtmor = m_{\dtmor'} \circ \gamma$, we get that $|\class x k_M | = |\gamma ( \class x k_M )|$. Thus, for each $x$ in $T$ we can choose a permutation $\pi_x$ of~$[d]$ such that $\pi_x ( \class x k_M ) = \gamma ( \class x k_M )$. We claim that $\tau$, $\{\pi_x\}_{x \in T}$ is an isomorphism from $M$ to $M'$. We have the class-preserving property by construction. Since $\gamma$, $\inv \gamma$ are graph homomorphisms, we have that $\class t j_M$, $\class v i_M$ are incident if and only if $\gamma (  \class t j_M  )$, $\gamma (  \class v i_M  )$ are incident. Since $\dtmor' \circ \gamma = \tau \circ \dtmor$, 
		we get that $\gamma ( \class x i ) = \class {\tau(x)} {\pi_x(i)}$. The previous two facts and the construction of $G$ and $G'$ together imply that $\class t j_M \subset \class v i_M$ 
		if and only if $\class {\tau(t)} {\pi_t(j)}_{M'} \subset \class {\tau(v)} {\pi_v(i)}_{M'}$.
		
		Thus, we have shown that the map that sends the isomorphism class of $M$ to the isomorphism class of $\dtmor_M$ is well defined and injective. Construction~\ref{cons-dtmor-to-gd} shows that the map is also surjective.
	\end{proof} 
	
	Now that the correspondence between \DTmors and gluing datums is set up, we use the following notation for convenience:
	
	\begin{itemize}
		\item $C_M = C_{\dtmor_M}$, the cone of sources.
		\item $A_M = A_{\dtmor_M}$, the edge-length map.
		\item $(M, \ell_T) = (\dtmor_M, \ell_T)$, a tropical morphism. 
		\item We say that $M$ has full-rank, or is full-dimensional, or is a combinatorial type if $\dtmor_M$ has full-rank, or is full-dimensional, or is a combinatorial type, respectively.  
	\end{itemize}
	
	We also define the \emph{genus of a gluing datum} to be $g(H(M))$. Likewise, the genus of a \DTmor is the genus of its source.

	\subsection{Local properties} \label{sec-local} 
	Let $M = (T, d, \sim)$ be a genus-$g$ gluing datum and $\dtmor$ be $\dtmor_M \!:\! G \to T$. Ideally, a deformation procedure would modify a local part of $M$, namely the gluing relations above a particular edge of $T$ and its endpoints. The resulting $M'$ should be full-dimensional if we began with a full-dimensional $M$. The obstruction to this idea is that a local change in $M$ leads to a change in $A_M$ that may make the rank drop. Controlling the rank of $A_M$ is a global condition, which is harder to check than local conditions.
	
	We defer the hard problem of constructing full-dimensional gluing datums to Section~\ref{sec-constructions}, and explore a different question here: what conditions are necessary for a gluing datum to be full-dimensional. It turns out that full-dimensional gluing datums have some remarkable combinatorial properties, which are easier to check than the rank. This allows us to easily discard many potential constructions as not full-dimensional. We present these combinatorial properties in Definition~\ref{def-conditions}, after giving some auxiliary definitions.

	\begin{de} \label{def-auxiliary-conditions} Let $M = (T, d, \sim)$ be a gluing datum.
		\begin{itemize}
			\item A vertex $v$ of $T$ is \emph{change-minimal} if $\ch v + \vale v = 3$. 
			\item  A vertex $A$ of $G$ satisfies the \emph{no-return} condition if there are at least two non-dangling edges in $E(A)$ above different edges of $T$; in other words $|\dtmor(\Neighnd{A})|$ is at least~2.
			\item Let $h = \langle A_0, e_1, \dots, e_\mu, A_\mu \rangle$ be an edge of $H(M)$ (recall that this means that $h$ is a path in $G$, both ends of $h$ have $\nddeg$ value at least 3, and the inner vertices of $h$ have $\nddeg$ value equal to 2). We say that
			$h$ satisfies the \emph{pass-once condition} if $\dtmor$ restricted to the set $\{e_i \in h \,|\, \dtmor(e_i) \text { is not incident to a leaf} \}$ is injective.
		\end{itemize}
	\end{de} 
	
	\begin{de} \label{def-conditions} Let $M = (T, d, \sim)$ be a gluing datum.
		\begin{itemize}
			\item $M$ is \emph{change-minimal} if all vertices of $T$ are change-minimal.
			\item $M$ satisfies the \emph{dangling-no-glue} condition if $|\class x k| = 1$ for all dangling $|\class x k|$.
			\item $M$ satisfies the \emph{no-return} condition if all non-dangling vertices $A$ of $G$ such that $\dtmor(A)$ is not monovalent satisfy the no-return condition. 				
			\item $M$ satisfies the \emph{pass-once} condition if all edges of $H(M)$ satisfy the pass-once condition.			
		\end{itemize}
	\end{de}

	We proceed to describe informally the intuition behind each property and give an example before beginning with proofs. Being change-minimal is equivalent to saying that the correction term in the dimension formula equals zero, namely that $|E(T)| = 2g + 2d - 5$. By the discussion after Definition~\ref{def-full-dim} we have that full-dimensional implies change-minimal. Dangling-no-glue says that no identification is used in parts of $G$ that do not affect $\ell_{H(M)}$. This is a sort of ``efficiency'' condition. No-return forbids edges of $H(M)$ to change copies of $T$ in any place except above a leaf of $T$. Pass-once eases the calculation of $A_\dtmor$ (see Proposition~\ref{prop-adm-matrix}).
	
	\begin{ex} \label{example-local-properties} The gluing datum of Example~\ref{ex-running} is full-dimensional (see the calculation at 5.a of Example~\ref{ex-three-cycles-loop}). It can be checked that this gluing datum is change-minimal, satisfies dangling-no-glue, no-return, and pass-once; this agrees with our claim. Now we point out instances in the examples that do not fullfill some of the items in Definition~\ref{def-conditions}.

		\begin{itemize}
			\item Consider the map $\dtmor_B$ from Example~\ref{example-discrete-tropical-morphism}. The second vertex from left to right in the target is the only change-minimal vertex, and above it the no-return condition is violated. The pass-once condition is violated by the edge passing above the middle edge of the target. The dangling-no-glue condition is violated by all of the dangling elements.
			\item Consider the map $\dtmor_A$ from  Example~\ref{example-combinatorial-type-of-DTmors}. It does not satisfy dangling-no-glue because of the leftmost vertex in the target, and is not change-minimal. It satisfies no-return and pass-once.
		\end{itemize}
	\end{ex}
	
	Instead of showing that full-dimensional implies dangling-no-glue, no-return and pass-once, we derive these conditions from being change-minimal and having full-rank. This opens the possibility of using these results in future work for studying those special graphs that have gonality less than $\lceil g/2 \rceil + 1$. We begin with a generalization of Lemma~\ref{lemma-equal-columns}.
	
	\begin{lm}[zero change] \label{lemma-change-zero}  Let $v$ be in $V(T)$, $h$ an edge of~$H(M)$ such that $\dtmor(h) \cap E(v)$ contains two edges $t_1$ and $t_2$, and  $A_1,  \dots, A_r$ the vertices in $h \cap \inv \dtmor(v)$. If the following conditions are true:
		\begin{enumerate}[(a)]
			\item $r_\varphi(A_q) = 0 $ for all~$1 \le q \le r$,
			\item the ends of $h$ are not above $v$,
			\item $|e| = 1$ for dangling $e$ in $E(A_q)$ for all~$1 \le q \le r$,
		\end{enumerate}	
		then in the edge-length matrix we have $a_{ht_1} = a_{ht_2}$. 
	\end{lm} 
	
	Before proceeding to the proof, note that 
	the coefficient $a_{ht}$, for $h$ in $E(H(M))$ and $t$ in~$E(T)$, can be calculated as follows:     
	\[\ell_{H(M)}(h) = \sum_{e \in E(G) : \, e \in h } \ell_G(e) = \sum_{t \in E(T)} \left (\sum_{e \in h: \, \dtmor(e) = t} \dfrac 1 {|e|} \right ) \ell_T(t) = \sum_{t \in E(T)} a_{ht} \ell_T(t) ,\]  
	where the sum in the parenthesis equals zero if the index set is empty.
	
	\begin{proof} 
		Since $A_q$ is not an end of $h$ we have that $\nddeg A_q = 2$. Let $A_q = \class {v} {k_q}$, and $e$,$e'$ be the two edges in $\Neighnd {A_q}$. Lemma~\ref{lemma-formula-rphi} and conditions~(a) and~(c) imply that $2|{A_q}| = |e| + |e'|$, and since $|{A_q}| \ge |e|,|e|$ we get that $|{A_q}| = |e| = |e'|$, which makes the classes $\class{t_1}{k_q}$, $A_q$, $\class{t_2}{k_q}$ equal as subsets of $[d]$.  Thus, $a_{ht_1} = 1/\class{t_1}{k_1} + \dots + 1/\class{t_1}{k_r} = 1/\class{t_2}{k_1} + \dots + 1/\class{t_2}{k_r} = a_{ht_2}$.
	\end{proof}
	
	Note that Condition~(c) of Lemma~\ref{rem-leaves-min-change} is implied by dangling-no-glue. Recall that if $\vale v = 2$, then $r_\dtmor(A_q) = 0$ gives that $\vale A_q = 2$. Hence, $A_q$ is not an end of $h$, and there are no dangling edges incident to $A_q$, fulfilling Conditions~(b) and~(c). Now we prove dangling-no-glue.
	
	\begin{re}[change-minimal leaves] \label{rem-leaves-min-change}			
		Let $v$ in $V(T)$ be a leaf that is change-minimal, $A$ in $\inv \dtmor(v)$ non-dangling, and $t$ the edge incident to $v$. Since $A$ is non-dangling $\vale A \ge 2$, and as $\vale v = 1$ we get that $|A| \ge \vale A$. Thus, $r_\dtmor(A) = \vale A - 2 - |A|(\vale v - 2) \ge 2$. Since $\ch v = 2$ we get $r_\dtmor(A) = 2$, $\vale A = 2$, and $|A| = 2$. Thus, there is a unique non-dangling vertex above $v$, $\, \sim_t$ is trivial, and  exactly one edge of $H(M)$ passes above $v$ and~$t$. 
		
		This implies that in the edge-length matrix $A_M$ the column corresponding to $t$ has only one non-zero entry; it is a 2 in the row corresponding to the edge of $H(M)$ passing above~$v$. Thus, if $M$ is change-minimal and has full-rank, then each edge of $H(M)$ passes above at most one leaf; for otherwise $A_M$ would have two equal columns, a contradiction.
	\end{re}

	\begin{lm} \label{lemma-dangling-rphi}
		Let $M$ be change-minimal and full-rank, and $A$ in $V(G)$ dangling. Then $r_\varphi(A) = 0$.
	\end{lm}
	
	\begin{proof}
		Let $v = \dtmor(A)$. By change-minimal $r_\dtmor(A)$ equals  $0$, $1$, or $2$. If $r_\dtmor(A) = 1$, then Remark~\ref{rem-leaves-min-change} (change-minimal leaves) implies that $\vale v = 2$, and so $\ch v =  1$. Thus, if $h$ in $E(H(M))$ passes through $A'$ above $w_0$, then $r_\dtmor(A') = 0$, because $A'$ is non-dangling, so $A' \ne A$ and $r_\dtmor(A') + r_\dtmor(A) \le \ch v = 1$. Hence, Lemma~\ref{lemma-change-zero} (zero change) implies that the columns of $A_M$ corresponding to $t_1$, $t_2$ incident to $v$ are equal, contradicting that $M$ has full-rank. If $r_\dtmor(A) = 2$, Remark~\ref{rem-leaves-min-change} implies that $A$ is non-dangling, a contradiction.
	\end{proof}
	
	\begin{lm} \label{lemma-dangling-no-glue} 
		A change-minimal and full-rank $M$ satisfies the dangling-no-glue condition.
	\end{lm}
	

	
	\begin{proof}
		Observation~I: Let $A$ be a vertex such that $r_\dtmor(A) = 0$ and all elements of $\Neigh A$ have cardinality~1, except possibly one element $e$. Then by Lemma~\ref{lemma-formula-rphi} (formula for $r_\varphi$) we get that $r_\varphi(A) = 2(|A| - 1) - (|e|-1)$. As $r_\varphi(A) = 0$ we get $|e| + 1 = 2|A|$. Hence, $|A|=|e|=1$ since $|A| \ge |e|$.
		
		Now suppose that $e_0$ is dangling and $|e_0| > 1$. Choose an end $A_0$ of $e_0$. If $A_0$ is dangling then Lemma~\ref{lemma-dangling-rphi} allows us to apply Observation~I to choose $e_1$ in $\Neigh{A_0}$, distinct from $e_0$, such that $|e_1|>1$. Repeat the previous step with $e_1$, and continue to construct a sequence of vertices $A_0, A_1$, and so on; since $G$ is finite, either we arrive to a non-dangling $A_r$, or a vertex repeats in the sequence. Either way this means that after deleting $e_0$ the connected component of $A_0$ has a cycle. This argument can be applied to the other end of $e_0$ as well, giving a contradiction that $e_0$ is dangling. Thus, if $e$ is dangling, then $|e| = 1$. If $A$ is dangling, all edges in $\Neigh A$ are dangling, so they have cardinality~1, and the result follows from another application of Lemma~\ref{lemma-dangling-rphi} and Observation~I.
		%
		%
	\end{proof}
	
	Now we prove the no-return condition.
	
	\begin{lm}[nd.~$r_\dtmor$ formula] \label{lem-rphi-nd} 
		Let $M$ satisfy dangling-no-glue, and $A$ be in $V(G)$. If $e_1, e_2, \ldots, e_r$ in $E(G)$ are the non-dangling edges incident to $A$, then \[r_\varphi (A) = \nddeg A - 2 + 2|A| - (|e_1| + \ldots + |e_r|).\]
	\end{lm}
	\begin{proof}
		This is a consequence of Lemma~\ref{lemma-formula-rphi} and the dangling-no-glue condition.
	\end{proof}
	
	\begin{lm}[r1 implies no-return] \label{lem-noreturn-rphi1} 
		Let $M$ satisfy dangling-no-glue, and $A$ in $V(G)$ be non-dangling with $\vale \dtmor(A) \ge 2$. If $r_\varphi(A) \le 1$, then $A$ satisfies no-return.  
	\end{lm}
	\begin{proof}
		If $|A| = 1$ the result is clear. Assume that $|A| \ge 2$. Let $e_1, \ldots, e_r$ be the edges in $\Neighnd A$. Note that $r \ge 2$, since $A$ is non-dangling. Assume that all $e_q$ are above the same edge $t$, so $|e_1| + \ldots + |e_r| \le |A|$. Lemma~\ref{lem-rphi-nd} (nd.~$r_\dtmor$ formula) implies that $r_\varphi (A) = r - 2 + 2|A| - (|e_1| + \ldots + |e_r|) \ge r - 2 + |A| \ge 2$, a contradiction.   
	\end{proof}
	
	\begin{cor} \label{cor-no-return} 
		A change-minimal $M$ that satisfies the dangling-no-glue condition also satisfies the no-return condition.
	\end{cor}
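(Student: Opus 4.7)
The proof should be a short direct combination of the change-minimal hypothesis with Lemma~\ref{lem-noreturn-rphi1}. Given a non-dangling vertex $A$ of $G$ with $v = \dtmor(A)$ satisfying $\vale v \ge 2$, the plan is to bound $r_\dtmor(A)$ by $\ch v$ and then invoke Lemma~\ref{lem-noreturn-rphi1}.

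First I would recall that $\ch v = \sum_{A' \in \inv \dtmor(v)} r_\dtmor(A')$ and that each summand is nonnegative by the RH-condition, so $r_\dtmor(A) \le \ch v$. Next, change-minimality of $M$ says $\ch v + \vale v = 3$, and the standing assumption $\vale v \ge 2$ forces $\ch v \le 1$. Combining these two inequalities yields $r_\dtmor(A) \le 1$.

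At this point the dangling-no-glue hypothesis lets us apply Lemma~\ref{lem-noreturn-rphi1} directly to $A$, which concludes that $A$ satisfies the no-return condition. Since $A$ was an arbitrary non-dangling vertex of $G$ with $\dtmor(A)$ non-monovalent, this verifies the no-return condition for $M$ as in Definition~\ref{def-conditions}. There is no real obstacle: the entire content of the corollary is packaged into the inequality $r_\dtmor(A) \le \ch v \le 1$, after which Lemma~\ref{lem-noreturn-rphi1} finishes the argument.
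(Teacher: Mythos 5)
Your proof is correct and follows essentially the same route as the paper: both arguments reduce to showing that change-minimality forces $r_\dtmor(A) \le 1$ whenever $\vale \dtmor(A) \ge 2$, and then invoke Lemma~\ref{lem-noreturn-rphi1}. Your explicit chain $r_\dtmor(A) \le \ch v \le 1$ is just a slightly more spelled-out version of the paper's appeal to Remark~\ref{rem-leaves-min-change}.
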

	\begin{proof}
		Since $M$ is change-minimal by Remark~\ref{rem-leaves-min-change} if $r_\dtmor(A) \ge 2$, then $\dtmor(A)$ is a leaf. 
		The corollary follows then from Lemma~\ref{lem-noreturn-rphi1}. 
	\end{proof}
	
	By dangling-no-glue, $G$ looks like $T$ locally around dangling vertices. We now give a local description for non-dangling vertices, assuming a change-minimal~$M$. To make coming references to this result easier, we state all the possible cases, though several are straightforward corollaries of previous results. 
	
	\begin{prop}[local properties] \label{prop-local} 
		Let $M$ be change-minimal and satisfying dangling-no-glue, $A$ in $V(G)$ with $\nddeg A$ either 2 or 3, $v = \varphi(A)$, $r = \nddeg A$, and $\{e_1, \dots, e_r\} = \Neighnd A$. Then exactly one of the following cases happens:  
		\begin{itemize}[leftmargin=*] 
			\item (r0-nd3): $r_\varphi(A) = 0$ and $\nddeg A = 3$. Then $\dtmor$ restricted to $\Neighnd A$ is injective; $\vale v$ is $3$; and $|e_1| + |e_2| + |e_3| = 2|A| + 1$.
			
			\item (r0-nd2): $r_\varphi(A) = 0$ and $\nddeg A = 2$. Then $\dtmor$ restricted to $\Neighnd A$ is injective; $\vale v$ is 2 or 3; and $|e_1| = |A| = |e_2|$. 
			
			\item (r1-nd3): $r_\varphi(A) = 1$ and $\nddeg A = 3$. Then there is a labelling of the $e_i$ where $\dtmor(e_1) = \dtmor(e_2) \ne \dtmor(e_3)$; $\vale v$ is 2; and $|e_1| + |e_2| = |A| = |e_3|$. 
			
			\item (r1-nd2): $r_\varphi(A) = 1$ and $\nddeg A = 2$. Then $\dtmor$ restricted to $\Neighnd A$ is injective; $\vale v$ is 2; and there is a labelling of the $e_i$ where $|e_1| = |A|$ and $|e_2| = |A|-1$. 
			
			\item (r2-nd2): $r_\varphi(A) = 2$. Then $\dtmor(e_1) = \dtmor(e_2)$; $\vale v$ is 1; $|e_1| = |e_2|= 1$; and $|A| = 2$.
		\end{itemize}
	\end{prop}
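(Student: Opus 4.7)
The plan is to combine Lemma~\ref{lem-rphi-nd} (nd.~$r_\dtmor$ formula) with change-minimality ($\ch v + \vale v = 3$ and $\ch v \ge r_\dtmor(A) \ge 0$) to drive a case analysis on $r_\varphi(A)$, which must lie in $\{0,1,2\}$ since $\ch v \le 3$. In each case the formula
\[
r_\varphi(A) = \nddeg A - 2 + 2|A| - (|e_1| + \ldots + |e_r|)
\]
becomes a Diophantine constraint on $\sum_i |e_i|$, to be combined with the refinement bound $1 \le |e_i| \le |A|$ (classes above a common edge of $T$ partition the class $A$) and with Corollary~\ref{cor-no-return}, which gives $|\dtmor(\Neighnd A)| \ge 2$ whenever $\vale v \ge 2$.

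First I determine $\vale v$ in each case. For $r_\varphi(A) = 2$ the inequality $\ch v \ge 2$ forces $\vale v = 1$, and Remark~\ref{rem-leaves-min-change} then yields $\vale A = 2$ and $|A| = 2$; in particular $\nddeg A = 2$, which simultaneously rules out the spurious pairing $(r_\varphi(A), \nddeg A) = (2,3)$ and pins the indices to $|e_1| = |e_2| = 1$ with both edges above the unique edge incident to $v$. For $r_\varphi(A) \le 1$, the contrapositive of Remark~\ref{rem-leaves-min-change} forbids $\vale v = 1$, so $\vale v \ge 2$ and no-return applies.

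The remaining four cases are then settled by arithmetic. For $\nddeg A = 2$, no-return forces injectivity of $\dtmor$ on $\Neighnd A$; the formula yields $|e_1| + |e_2| = 2|A|$ when $r_\varphi(A) = 0$, forcing $|e_1| = |e_2| = |A|$, or $|e_1| + |e_2| = 2|A|-1$ when $r_\varphi(A) = 1$, forcing $\{|e_1|, |e_2|\} = \{|A|, |A|-1\}$ after relabelling; the admissible values of $\vale v$ then follow from $\vale v + \ch v = 3$ and $\ch v \ge r_\varphi(A)$. For $\nddeg A = 3$ and $r_\varphi(A) = 0$, the identity $\sum_i |e_i| = 2|A|+1$ is incompatible with any two $e_i$ sharing an image in $T$ (since then $\sum_i |e_i| \le 2|A|$), so $\dtmor$ is injective on $\Neighnd A$, which gives $\vale v \ge 3$ and hence $\vale v = 3$ by change-minimality. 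For $\nddeg A = 3$ and $r_\varphi(A) = 1$, the bound $\vale v \le 2$ forces exactly two of the $e_i$ to share an image, and $\sum_i |e_i| = 2|A|$ together with the partition inequality pins down the precise indices as stated.

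The main difficulty is essentially bookkeeping: each case demands that the refinement inequality $\sum_{e \text{ above } t} |e| \le |A|$ combine with the Diophantine identity to admit a unique combinatorial shape. Once Remark~\ref{rem-leaves-min-change} is invoked to eliminate $\vale v = 1$ whenever $r_\varphi(A) < 2$, and in the opposite direction to force $\nddeg A = 2$ when $r_\varphi(A) = 2$, the remaining work reduces to solving small integer equations in $\{1, \ldots, |A|\}$.
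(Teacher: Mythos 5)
Your proposal is correct and follows essentially the same route as the paper: a case split on $r_\varphi(A)\in\{0,1,2\}$ driven by Lemma~\ref{lem-rphi-nd}, with Remark~\ref{rem-leaves-min-change} eliminating the pairing $(r_\varphi(A),\nddeg A)=(2,3)$ and settling (r2-nd2), Corollary~\ref{cor-no-return} supplying $|\dtmor(\Neighnd A)|\ge 2$, and the refinement bound $\sum_{e\text{ above }t}|e|\le|A|$ pinning down the cardinalities. The paper's proof is terser (it only writes out the (r0-nd3) arithmetic and declares the rest routine), but the ingredients and their deployment are identical to yours.
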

	\begin{proof}
		Change-minimal implies that $r_\varphi(A)$ equals 0, 1, or 2, and that $\vale v$ equals 1, 2, 3 for $v$ in $V(T)$. By Remark~\ref{rem-leaves-min-change} (change-minimal leaves) $r_\varphi(A) = 2$ and $\nddeg A = 3$ cannot be. So the described cases are all the possibilities. The same remark proves~(r2-nd2). Note that $M$ satisfies the no-return condition by Corollary~\ref{cor-no-return}.
		
		For the case (r0-nd3), no-return implies that either the three non-dangling edges are above different edges of $T$, or (up to labelling) $e_2$ and $e_3$ are above the same edge and $e_1$ is above another. In the latter case Lemma~\ref{lem-rphi-nd} and $|e_1| \le |A|$ imply $0 = 1 + 2|A| - |e_1| - |e_2| - |e_3| \ge 1 + |A|  - |e_2| - |e_3|$, but $|e_2| + |e_3| \le |A|$, a contradiction.  Hence $e_1$, $e_2$, $e_3$ are above distinct edges of $T$, so $\vale v = 3$. Lemma~\ref{lem-rphi-nd} implies the last formula.   
		
		The remaining follows from change-minimal, dangling-no-glue, Lemma~\ref{lem-rphi-nd}, and the gluing datum axioms.
	\end{proof}
	
	\begin{cor} [image of a path] \label{cor-edge-of-h-injection}
		Let $M$ be change-minimal and satisfy dangling-no-glue. Let $P = \langle A_0, e_1, \ldots, e_\mu, A_\mu \rangle$ be a path of $G$ such that $A_i$ is non-dangling and $\vale \dtmor(A_i) \ge 2$ for each~$i$. Then $\langle \dtmor(A_0), \dtmor(e_1), \ldots, \dtmor(e_\mu), \dtmor(A_\mu) \rangle$ is a path of $T$.
	\end{cor}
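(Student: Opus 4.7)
The plan is to show the image sequence is a non-backtracking walk in the tree $T$, which forces it to be a path.

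First, verify the sequence is a walk in $T$: non-degeneracy of $\dtmor$ gives $m_\dtmor(e_i) \geq 1$, so each $\dtmor(e_i)$ is an edge of $T$; and since $\dtmor$ is a graph morphism, consecutive elements of the image sequence are incident in $T$. Because $T$ is a tree, a walk in $T$ is a path precisely when it never backtracks, that is, $\dtmor(e_i) \neq \dtmor(e_{i+1})$ for every interior index $i$. Thus it suffices to verify non-backtracking at each interior~$A_i$.

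Fix such an interior index. At the non-dangling vertex $A_i$ with $\vale \dtmor(A_i) \geq 2$, Proposition~\ref{prop-local} places $A_i$ into one of the cases (r0-nd3), (r0-nd2), (r1-nd3), or (r1-nd2); case (r2-nd2) is excluded because it would force $\vale \dtmor(A_i) = 1$. Moreover, by the definition of dangling, a dangling edge incident to a non-dangling vertex has its other endpoint in a tree component, which is itself dangling; so the hypothesis that every $A_j$ in $P$ is non-dangling forces both $e_i$ and $e_{i+1}$ to be non-dangling (otherwise an adjacent $A_{j}$ would be dangling). In cases (r0-nd3), (r0-nd2), and (r1-nd2), the proposition states that $\dtmor|_{\Neighnd{A_i}}$ is injective, which immediately yields $\dtmor(e_i) \neq \dtmor(e_{i+1})$.

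The main obstacle is the remaining case (r1-nd3). There, two of the three non-dangling edges at $A_i$ share their image $t$ in $T$, so the injectivity argument fails. The strategy is to exploit the gluing-datum structure above the other endpoint $v'$ of $t$ together with the balancing relation $|e_1| + |e_2| = |A_i|$ from the proposition to show that the two edges above $t$ must share their other endpoint in $G$, i.e., form parallel edges. Since a path in $G$ cannot consecutively traverse parallel edges without revisiting the shared endpoint, this contradicts that $P$ is a path. Making this parallel-edge claim rigorous directly from the gluing-datum axioms and the RH-condition is the chief difficulty of the argument; once established, we obtain $\dtmor(e_i) \ne \dtmor(e_{i+1})$ for every interior $i$, completing the proof.
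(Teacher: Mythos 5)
Your reduction is the right one, and it is essentially the paper's: the image is a walk in $T$ (graph morphism plus non-degeneracy), a walk in a tree is a path exactly when it never backtracks, so everything comes down to $\dtmor(e_i)\ne\dtmor(e_{i+1})$ at interior vertices. Your preliminary observation that the hypothesis on the $A_j$ forces every edge of $P$ to be non-dangling (so that Proposition~\ref{prop-local} speaks to $\Neighnd{A_i}$) is correct and is a point the paper leaves implicit, as is the exclusion of case (r2-nd2) via $\vale\dtmor(A_i)\ge 2$ and the disposal of the three cases in which $\dtmor$ is injective on $\Neighnd{A_i}$.

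The gap is that you prove nothing in case (r1-nd3): you name the claim you would need --- that the two non-dangling edges of $A_i$ lying above the same edge $t$ share their other endpoint, so that a path cannot traverse both consecutively --- and then defer it as ``the chief difficulty''. A deferred key step is not a proof, and this one is genuinely delicate: change-minimal together with dangling-no-glue does not by itself tell you that $e_1$ and $e_2$ end at a common class above the other end of $t$. The machinery the paper uses to pin down (r1-nd3) vertices (Remark~\ref{rem-leaves-min-change} combined with Lemmas~\ref{lemma-loop-12} and~\ref{lemma-loop-bridge}, which force the two edges into a loop over a path to a leaf) relies on the full-rank hypothesis, which is not among the corollary's assumptions and whose consequences (pass-once, Lemma~\ref{lemma-pass-once}) sit downstream of this very corollary, so invoking them would be circular. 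For what it is worth, the paper's own one-line proof does not visibly address (r1-nd3) either; it is saved by the fact that in every application $P$ is an edge of $H(M)$ or a sub-path of one, so its interior vertices have non-dangling valency $2$ and only the injective cases (r0-nd2) and (r1-nd2) can occur. If you restrict to that situation your argument closes immediately; as written, with interior vertices of non-dangling valency $3$ allowed, your proof is incomplete at exactly the point you flagged.
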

	
	\begin{proof}
		Since $\dtmor$ is a graph morphism, and by the local properties of Proposition~\ref{prop-local} , we get that consecutive elements of $\dtmor(P) := \langle \dtmor(A_0), \dtmor(e_1), \ldots, \dtmor(e_\mu), \dtmor(A_\mu) \rangle$ are incident, and distinct. Since consecutive elements of $\dtmor(P)$ are distinct, if $\dtmor(P)$ repeats an element, then we get a cycle. As $T$ is a tree we conclude that $\dtmor(P)$ is a path.
	\end{proof}
	
	Now we move on to the pass-once condition.
	
	\begin{lm} \label{lemma-loop-12} Let $M$ be change-minimal and have full-rank, $u$ and $v$ be divalent and monovalent vertices of $T$, respectively, with $\Neigh u = \{t', t\}$, $\Neigh v = \{t\}$. Then there is exactly one edge of $H$ above $t$ and one above $t'$; they are a loop and a bridge, respectively.
	\end{lm}   
	
	\begin{proof}
		Swap trees so that $1 \sim_v 2$. Remark~\ref{rem-leaves-min-change} (change-minimal leaves) yields that $e_1 = \class t 1$ and $e_2 = \class t 2$ are two distinct edges of~$G$, with $|e_1| = |e_2| = 1$, and are the only non-dangling edges above~$t$. This implies, by the no-return condition, that $\class u 1$ and $\class u 2$ are the only non-dangling classes above~$u$ (it is possible that $\class u 1 = \class u 2$). Let $h$ be the edge of $H$ containing $e_1$, $e_2$. If both $\class u 1$ and $\class u 2$ have non-dangling valency equal to~2, then the only edge of $H$ passing above $t$, and $t'$ is $h$. Thus, $a_{ht}$ and $a_{ht'}$ are the only non-zero entries in the columns of $A_M$ corresponding to $t$ and $t'$, respectively, a contradiction. Therefore, assume without loss of generality that $\nddeg \class u 1 = 3$. As $\vale u = 2$ and $|e_1| = 1$, by Case~(r1-nd3) of the local properties we must have that $\Neighnd{\class u 1}$ has two edges above $t$; namely $e_1$, $e_2$, that make the loop; and one edge above $t'$, the bridge.
	\end{proof}  
	
	\begin{lm} \label{lemma-pass-once}A change-minimal and full-rank $M$ satisfies the pass-once condition
	\end{lm}
	\begin{proof}
		Let $h = \langle A_0, e_1, \ldots, e_\mu, A_\mu \rangle$ be an edge of $H(M)$. If $h$ does not pass above a leaf, then we are done by Corollary~\ref{cor-edge-of-h-injection}. Assume $h$ passes above a leaf. By Remark~\ref{rem-leaves-min-change} (change-minimal leaves) there is a single vertex $A_i \in h$ such that $v := \dtmor(A_i)$ is a leaf. Thus, $h$ passes at most twice above any edge of $T$ by Corollary~\ref{cor-edge-of-h-injection}. We also get that $\dtmor(e_i) = \dtmor(e_{i+1})$ from Remark~\ref{rem-leaves-min-change}. Let $t_1 = \dtmor(e_i)$. Suppose that $h$ violates the pass-once condition. That is, for some $j,k\ \not \in \{i,i+1\}$ we have $\dtmor(e_j) = \dtmor(e_{k})$. We get that $\dtmor(e_{i-1}) = \dtmor(e_{i+2})$, because otherwise $\dtmor(\langle A_0, \ldots A_i \rangle)$ and $\dtmor(\langle A_i, \ldots A_\mu \rangle)$ would intersect only at $t_1$ by Corollary~\ref{cor-edge-of-h-injection} and the fact that $T$ is a tree, which contradicts that $\dtmor(e_j) = \dtmor(e_{k})$. Let $t_2 = \dtmor(e_{i-1})$.
		
		Label the copies of $T$ such that $1 \sim_v 2$. Label $h$ with 1. Lemma~\ref{lemma-loop-12} implies that $\vale w = 3$. Let $w$ be the other end of $t_1$, and $t_3$  be the other edge incident to~$w$. We investigate the classes above $t_1, t_2$ and $t_3$. Observe that $\class {t_3} 1$ and $\class {t_3} 2$ are dangling. Change-minimal and Remark~\ref{rem-leaves-min-change} imply that $|\class {t_1} 1| = |\class {t_1} 2| = 1$. All the vertices of $G$ above $w$ have $r_\varphi$-value zero. Thus, case (r0-nd2) of the local properties on $\class w 1$ and $\class w 2$ implies that $|\class {t_2} 1| = |\class {t_2} 2| = 1$.  Hence, $a_{1,1} = 2, a_{1,2} = 2, a_{1,3} = 0$. As $\class {t_1} k$ is dangling for $k \ge 3$, the other non-dangling vertices above $w$ have $\nddeg$ equal to 2. Lemma~\ref{lemma-change-zero} (zero change) implies that $a_{i,2} = a_{i,3}$ for $i \ge 2$.  See Figure~\ref{figure-7}, dangling edges shown dotted.

		\noindent	  
		\begin{minipage}{\textwidth}
			\begin{minipage}[b]{.5\textwidth}
				\centering
				\begin{overpic}[scale=1.5]{\figsdir/283.pdf}
					\put (35,-6) {\scalebox{1}{$v$}}
					\put (55,24) {\scalebox{1}{$w$}}    
					\put (47,9) {\scalebox{1}{$t_1$}}
					\put (25,24) {\scalebox{1}{$t_2$}}
					\put (77,24) {\scalebox{1}{$t_3$}}
					\put (-2,47) {\scalebox{1}{$e$}}		
				\end{overpic}  
			\end{minipage}%
			\begin{minipage}[b]{0.5\textwidth}
				\centering
				\begin{align*}
				&a_{1,1} = 2 
				\qquad a_{1,2} = 2
				\qquad a_{1,3} = 0\\
				&a_{i,1} = 0, \qquad \qquad 
				a_{i,2} = a_{i,3}, \, \\ &\text{for } i \ge 2.
				\end{align*}
				~
			\end{minipage}
			\captionof{figure}{} \label{figure-7}
		\end{minipage}	
		
		\vspace{0.5em}
		
		Let $\mathbf a_1, \mathbf a_2$, and $\mathbf a_3$ be the first three columns of $A_M$. Note that $\mathbf a_2 = \mathbf a_1 + \mathbf a_3$, contradicting that $M$ has full-rank.
	\end{proof}

	\begin{prop}[edge-length map is local] \label{prop-adm-matrix} 
		Let $M = (T,d,\sim)$ satisfy that the leaves of $T$ are change-minimal. Then $M$ satisfies the pass-once condition if and only if for all $h$ in $E(H)$, and $t$ in $E(T)$ the entry $a_{ht}$ is:
		\begin{enumerate}[(a)]
			\item 2 if $h$ passes above a leaf $v$ in $ V(T)$, and $t$ is incident to $v$.
			\item $1/|\class {t} k|$ if $h$ passes through some edge $\class {t} k$ of $G$, $t$ not incident to a leaf. 
			\item 0 otherwise.
		\end{enumerate}
	\end{prop}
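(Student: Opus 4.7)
The plan is to prove both directions directly from the summation formula
\[a_{ht} = \sum_{e \in h \, : \, \dtmor(e) = t} \frac{1}{|e|}\]
stated in the paragraph immediately before the proposition, combined with Remark~\ref{rem-leaves-min-change} to handle the leaf case. No new geometric input is needed; the proposition is essentially a bookkeeping statement once pass-once is in hand.

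For the forward implication, I will split into the three cases (a), (b), (c). In case (a), where $t$ is incident to a leaf $v$ and $h$ passes above $v$, Remark~\ref{rem-leaves-min-change} pins down the local structure completely: there is a unique non-dangling vertex $A$ above $v$ with $|A|=2$ and $\vale A=2$, the two edges of $\Neigh A$ lie above $t$ with index $1$, and $h$ is the unique edge of $H$ passing above $v$. I then argue that $\{e\in h:\dtmor(e)=t\}$ consists of exactly these two edges: any further edge of $h$ above $t$ would force $\vale A\ge 3$. Summing $1/1+1/1$ gives $a_{ht}=2$. In case (b), pass-once applies directly: since $t$ is not incident to a leaf, pass-once forces the set $\{e\in h:\dtmor(e)=t\}$ to have at most one element, and by hypothesis it contains $\class{t}{k}$; the formula then gives $a_{ht}=1/|\class{t}{k}|$. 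In case (c), I need to show no edge of $h$ maps to $t$. If $t$ is not incident to a leaf this is immediate from the hypothesis; if $t$ is incident to a leaf $v$ and $h$ does not pass above $v$, the analysis of case (a) shows that all non-dangling edges above $t$ belong to the unique edge of $H$ passing above $v$, which is not $h$, so $a_{ht}=0$.

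For the reverse implication, I will argue by contrapositive. Suppose pass-once fails: there exist $h\in E(H)$ and $t\in E(T)$, with $t$ not incident to a leaf, such that $h$ contains two distinct edges $e, e'$ above $t$. The summation formula then yields $a_{ht}\ge 1/|e|+1/|e'|$, which strictly exceeds $1/|\class{t}{k}|$ for every single class $\class{t}{k}$ above $t$ that appears in $h$ (in particular, for $k$ giving $e$ or $e'$). This contradicts the form required by (b), completing the contrapositive.

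The only step that requires some care is case (a) of the forward direction: specifically, ensuring that Remark~\ref{rem-leaves-min-change} really exhausts the edges of $h$ above $t$, rather than leaving room for further contributions to $a_{ht}$. Once the remark's conclusion that $\vale A = 2$ at the unique non-dangling vertex over $v$ is used to force $h$ to be a loop closing up at $A$, everything else is a direct application of the summation formula together with the definition of pass-once.
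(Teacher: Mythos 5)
Your proof is correct and follows essentially the same route as the paper's (whose proof is just two lines: part (a) from Remark~\ref{rem-leaves-min-change}, part (b) being equivalent to pass-once); you have simply spelled out the details, including the useful observation that any non-dangling edge above a leaf-incident $t$ must be incident to the unique non-dangling vertex $A$ over the leaf, so a third such edge in $h$ would force $\vale A \ge 3$. The only quibble is the closing phrase about $h$ being ``a loop closing up at $A$'' --- $h$ need not be a loop of $H(M)$ in this generality; what matters, and what your main argument correctly uses, is that $\vale A = 2$ caps the number of edges of $h$ above $t$ at two, each of index $1$.
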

	\begin{proof}
		Part (a) follows from Remark~\ref{rem-leaves-min-change}. Part (b) is true if and only if pass-once is. 
	\end{proof}

	
	
	\section{Examples} \label{sec-examples}  
	
	\begin{ex}[3 loops on a loop] \label{ex-three-cycles-loop}  Let $\mH = (H, \ell_H)$ be a loop of 3 loops (as in \cite{lpp12}), with $H$, vertex and edge labellings shown in Figure~\ref{figure-8}. Set $y_i = \ell_H(h_i)$, $z_i = \ell_T(t_i)$.
		
		\vspace{1em}
		
		\begin{minipage}{\textwidth}
			\centering
			\begin{overpic}[scale=0.6]{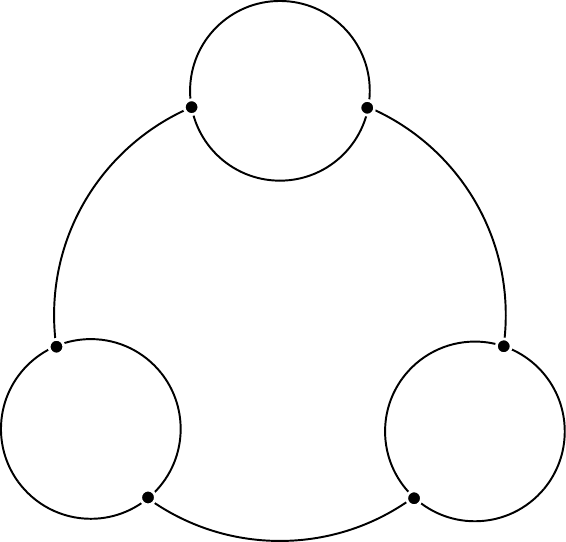}  
				\put (78,53) {\scalebox{0.8}{$y_1$}}
				\put (47,3) {\scalebox{0.8}{$y_2$}}
				\put (15,53) {\scalebox{0.8}{$y_3$}}
				
				\put (72,24) {\scalebox{0.8}{$y_4$}}
				\put (89,11) {\scalebox{0.8}{$y_5$}}
				\put (23,24) {\scalebox{0.8}{$y_6$}}
				
				\put (5,10) {\scalebox{0.8}{$y_7$}}
				\put (47,66) {\scalebox{0.8}{$y_8$}} 
				\put (47,89) {\scalebox{0.8}{$y_9$}}
				
				\put (67,78) {\scalebox{0.8}{$A$}}
				\put (90,35) {\scalebox{0.8}{$B$}}
				\put (70,0) {\scalebox{0.8}{$C$}}
				\put (23,0) {\scalebox{0.8}{$D$}}
				\put (3,35) {\scalebox{0.8}{$E$}}
				\put (27,78) {\scalebox{0.8}{$F$}}
			\end{overpic}
			
			\vspace{0.5em}
			$H$
			\captionof{figure}{} \label{figure-8}
		\end{minipage}
		
		\vspace{0.5em}
		
		Assume without loss of generality that $y_3 = \max(y_1, y_2, y_3)$, $y_4 \le y_5$, $y_6 \le y_7$, and $y_8 \le y_9$. We show 9 cases of glueing datums which realize $\mH$. These depend on how large the length $y_3$ is, compared to several other linear combinations of lengths. 
		Each case depicts: the base tree $T$ with an edge labelling; the glueing datum which realizes $\mH$; and its edge-length matrix using as labelling for $H$ the one shown in Figure~\ref{figure-8}. The odd cases are full-dimensional, the even cases are not. In two consecutive odd cases the edge-length matrices differ only by one column. 
		We compute $\det A_M$ to show these are indeed full-dimensional.
		
		
		\begin{itemize}[leftmargin=*]  
			\item First case: if $y_3 < y_1 + y_2$, then $M_1$ in Figure~\ref{figure-9} realizes $\mH$.
			
			\vspace{0.5em}
			
			\noindent 
			\begin{minipage}{\textwidth}
				\begin{minipage}{.32\textwidth}
					\centering
					\begin{overpic}[scale=1]{\figsdir/046.pdf}    
						\put (42,23) {\scalebox{0.8}{$z_1$}} 
						\put (65,46) {\scalebox{0.8}{$z_2$}} 
						\put (30,46) {\scalebox{0.8}{$z_3$}} 
						\put (41,9) {\scalebox{0.8}{$z_4$}} 
						\put (18,9) {\scalebox{0.8}{$z_5$}} 
						\put (82,32) {\scalebox{0.8}{$z_6$}} 
						\put (82,50) {\scalebox{0.8}{$z_7$}}
						\put (12,32) {\scalebox{0.8}{$z_8$}} 
						\put (12,50) {\scalebox{0.8}{$z_9$}} 
					\end{overpic} 
					
				\end{minipage}\hspace{1em}
				\begin{minipage}{.32\textwidth} 
					\centering
					\begin{overpic}{\figsdir/208.pdf}    
						\put (15,58) {\scalebox{0.8}{$A$}}
						\put (25,24) {\scalebox{0.7}{$B$}}
						\put (25,11) {\scalebox{0.7}{$C$}}
						\put (80,45) {\scalebox{0.7}{$D$}}
						\put (80,71) {\scalebox{0.7}{$E$}}
						\put (15,71) {\scalebox{0.7}{$F$}}
					\end{overpic} 
					
				\end{minipage}\hspace{1em}
				\begin{minipage}{.25\textwidth} 
					\centering 
					$\left(\begin{smallmatrix}
					1 & 0 & 1 &\     &   &   &\     &   &   \\
					1 & 1 & 0 &\     &   &   &\     &   &  \\
					0 & 1 & 1 &\     &   &   &\     &   &   \\
					&   &   &\   2 &   &   &\     &   &   \\  
					&   &   &\     & 2 &   &\     &   &   \\ 
					&   &   &\     &   & 2 &\     &   &   \\  
					&   &   &\     &   &   &\   2 &   &   \\ 
					&   &   &\     &   &   &\     & 2 &   \\
					&   &   &\     &   &   &\     &   & 2 \\
					\end{smallmatrix} \right)$ 
					
				\end{minipage}\\
				\noindent \begin{minipage}[t]{.32\textwidth}
					\centering
					$T_1$			
				\end{minipage}\hspace{1em}
				\begin{minipage}[t]{.32\textwidth} 
					\centering
					$M_1$			
				\end{minipage}\hspace{1em}
				\begin{minipage}[t]{.25\textwidth} 
					\centering 	
					$\det A_1 =  128$
				\end{minipage}
				\captionof{figure}{} \label{figure-9}
				\vspace{1em}
			\end{minipage}

			\item Second case: if $y_3 = y_1 + y_2$,  then $M_2$ in Figure~\ref{figure-10} realizes $\mH$.
			Note that edge $t_1$ was contracted.
			
			\noindent	
			\begin{minipage}{\textwidth}
				\begin{minipage}{.32\textwidth}
					\centering
					\begin{overpic}{\figsdir/048.pdf}   
						\put (65,21) {\scalebox{0.8}{$z_2$}} 
						\put (30,21) {\scalebox{0.8}{$z_3$}} 
						\put (56,7) {\scalebox{0.8}{$z_4$}} 
						\put (40,7) {\scalebox{0.8}{$z_5$}} 
						\put (82,7) {\scalebox{0.8}{$z_6$}} 
						\put (81,25) {\scalebox{0.8}{$z_7$}}
						\put (12,7) {\scalebox{0.8}{$z_8$}} 
						\put (12,25) {\scalebox{0.8}{$z_9$}} 
					\end{overpic} 
				\end{minipage}\hspace{1em}
				\begin{minipage}{.32\textwidth} 
					\centering
					\begin{overpic}{\figsdir/205.pdf}    
						\put (15,33) {\scalebox{0.8}{$A$}}
						\put (47,33.5) {\scalebox{0.7}{$B$}}
						\put (47,20) {\scalebox{0.7}{$C$}}
						\put (80,20) {\scalebox{0.7}{$D$}}
						\put (80,46) {\scalebox{0.7}{$E$}}
						\put (15,46) {\scalebox{0.7}{$F$}} 
					\end{overpic} 
				\end{minipage}\hspace{1em}
				\begin{minipage}{.25\textwidth} 
					\centering 
					$\left(\begin{smallmatrix}
					0 & 1 &\     &   &   &\     &   &   \\
					1 & 0 &\     &   &   &\     &   &  \\
					1 & 1 &\     &   &   &\     &   &   \\
					&   &\   2 &   &   &\     &   &   \\  
					&   &\     & 2 &   &\     &   &   \\ 
					&   &\     &   & 2 &\     &   &   \\  
					&   &\     &   &   &\   2 &   &   \\ 
					&   &\     &   &   &\     & 2 &   \\
					&   &\     &   &   &\     &   & 2 \\
					\end{smallmatrix} \right)$ 
					
				\end{minipage}\\
				\noindent \begin{minipage}[t]{.32\textwidth}
					\centering
					$T_2$			
				\end{minipage}\hspace{1em}
				\begin{minipage}[t]{.32\textwidth} 
					\centering
					$M_2$			
				\end{minipage}\hspace{1em}
				\begin{minipage}[t]{.25\textwidth} 
					\centering 	
					$A_2$ \\
				\end{minipage}
				\captionof{figure}{} \label{figure-10}
			\end{minipage}
			
			\vspace{0.5em}
			
			\item Third case: if $y_1 + y_2 < y_3 < y_1 + y_2 + y_4$, Figures~\ref{figure-11} and~\ref{figure-12} show realizations of~$\mH$.
			\begin{itemize}[leftmargin=*] 
				\item Case 3.a 
				
				\noindent	
				\begin{minipage}{\textwidth}
					\begin{minipage}{.32\textwidth}
						\centering
						\begin{overpic}{\figsdir/052.pdf}    
							\put (48,21) {\scalebox{0.8}{$z_1$}}
							\put (70,21) {\scalebox{0.8}{$z_2$}} 
							\put (28,21) {\scalebox{0.8}{$z_3$}} 
							\put (62,9) {\scalebox{0.8}{$z_4$}} 
							\put (34,9) {\scalebox{0.8}{$z_5$}} 
							\put (82,7) {\scalebox{0.8}{$z_6$}} 
							\put (82,25) {\scalebox{0.8}{$z_7$}}
							\put (12,7) {\scalebox{0.8}{$z_8$}} 
							\put (12,25) {\scalebox{0.8}{$z_9$}} 
						\end{overpic} 
						
					\end{minipage}\hspace{1em}
					\begin{minipage}{.32\textwidth} 
						\centering
						\begin{overpic}{\figsdir/206.pdf}    
							\put (15,33) {\scalebox{0.8}{$A$}}
							\put (40,33) {\scalebox{0.7}{$B$}}
							\put (55,20) {\scalebox{0.7}{$C$}}
							\put (80,20) {\scalebox{0.7}{$D$}}
							\put (80,46) {\scalebox{0.7}{$E$}}
							\put (15,46) {\scalebox{0.7}{$F$}}
						\end{overpic} 
						
					\end{minipage}\hspace{1em}
					\begin{minipage}{.25\textwidth} 
						\centering 
						$\left(\begin{smallmatrix}
						0 & 0 & 1 &\     &   &   &\     &   &   \\
						0 & 1 & 0 &\     &   &   &\     &   &  \\
						1 & 1 & 1 &\     &   &   &\     &   &   \\
						1 &   &   &\   2 &   &   &\     &   &   \\  
						1 &   &   &\     & 2 &   &\     &   &   \\ 
						&   &   &\     &   & 2 &\     &   &   \\  
						&   &   &\     &   &   &\   2 &   &   \\ 
						&   &   &\     &   &   &\     & 2 &   \\
						&   &   &\     &   &   &\     &   & 2 \\
						\end{smallmatrix} \right)$ 
					\end{minipage}
					\\
					\noindent \begin{minipage}[t]{.32\textwidth}
						\centering
						$\Aq {T_3} a$			
					\end{minipage}\hspace{1em}
					\begin{minipage}[t]{.32\textwidth} 
						\centering
						$M_3$			
					\end{minipage}\hspace{1em}
					\begin{minipage}[t]{.25\textwidth} 
						\centering 	
						$\det A_3 =  -64$
					\end{minipage}
					\captionof{figure}{} \label{figure-11}
				\end{minipage}
				
				\item 	Case 3.b
				
				\noindent	
				\begin{minipage}{\textwidth}
					\begin{minipage}{.32\textwidth}
						\centering
						\begin{overpic}{\figsdir/052.pdf}    
							\put (48,21) {\scalebox{0.8}{$z_1$}}
							\put (70,21) {\scalebox{0.8}{$z_2$}} 
							\put (28,21) {\scalebox{0.8}{$z_3$}} 
							\put (62,9) {\scalebox{0.8}{$z_5$}} 
							\put (34,9) {\scalebox{0.8}{$z_4$}} 
							\put (82,7) {\scalebox{0.8}{$z_6$}} 
							\put (82,25) {\scalebox{0.8}{$z_7$}}
							\put (12,7) {\scalebox{0.8}{$z_8$}} 
							\put (12,25) {\scalebox{0.8}{$z_9$}} 
						\end{overpic} 
					\end{minipage}\hspace{1em}
					\begin{minipage}{.32\textwidth} 
						\centering
						\begin{overpic}{\figsdir/207.pdf}    
							\put (15,33) {\scalebox{0.8}{$A$}}
							\put (40,33) {\scalebox{0.7}{$B$}}
							\put (55,20) {\scalebox{0.7}{$C$}}
							\put (80,20) {\scalebox{0.7}{$D$}}
							\put (80,46) {\scalebox{0.7}{$E$}}
							\put (15,46) {\scalebox{0.7}{$F$}} 
						\end{overpic} 
					\end{minipage}\hspace{1em}
					\begin{minipage}{.25\textwidth} 
						\centering 
						$\left(\begin{smallmatrix}
						0 & 0 & 1 &\     &   &   &\     &   &   \\
						0 & 1 & 0 &\     &   &   &\     &   &  \\
						1 & 1 & 1 &\     &   &   &\     &   &   \\
						1 &   &   &\   2 &   &   &\     &   &   \\  
						1 &   &   &\     & 2 &   &\     &   &   \\ 
						&   &   &\     &   & 2 &\     &   &   \\  
						&   &   &\     &   &   &\   2 &   &   \\ 
						&   &   &\     &   &   &\     & 2 &   \\
						&   &   &\     &   &   &\     &   & 2 \\
						\end{smallmatrix} \right)$ 
					\end{minipage}
					\\
					\noindent \begin{minipage}[t]{.32\textwidth}
						\centering
						$\Aq {T_3} b$			
					\end{minipage}\hspace{1em}
					\begin{minipage}[t]{.32\textwidth} 
						\centering
						$M_3$			
					\end{minipage}\hspace{1em}
					\begin{minipage}[t]{.25\textwidth} 
						\centering 	
						$A_3$ \\
					\end{minipage}
					\captionof{figure}{} \label{figure-12}
				\end{minipage}
				\vspace{0.5em}
				
			\end{itemize}
			
			Here $\mH$ is realized by two tropical morphisms (distinct ones in general, as we argue next), yet at the combinatorial level the gluing datum is the same. We drew $M_3$ twice to illustrate which of $y_4$, $y_5$ is longer; and to show the labelling of the base tree. The two length functions $\Aq z a$, $\Aq z b$ are related by swapping $z_4$ and $z_5$. 
			
			We claim that $\Aq \tmor a = (\dtmor_3,\ \Aq z a)$ and $\Aq \tmor b = (\dtmor_3,\ \Aq z b)$ are distinct tropical morphisms if and only if $z_4$, $z_5$ are distinct lengths. Set $v = \dtmor (B)$. 
			Since $z_4$, $z_5$ are distinct, one of the two edges between $B$, $C$ is longer than the other, and there is a preimage of $v$ under $\Aq \tmor a$ in this longer edge. This is not true for $\Aq \tmor b$. Conversely, it is clear that if  $z_4$, $z_5$ are equal then swapping the lengths has no effect.
			
			This situation persists for the remaining gluing datums in this example.
			
			\item Fourth case: if $y_3 = y_1 + y_2 + y_4$, then Figures~\ref{figure-13} and~\ref{figure-14} show realizations of~$\mH$. The edge $t_4$ was contracted. The drawing shows two different gluing datums (which reflects a bifurcation in the shrinking and regrowing sequence so far), but by swapping some branches one can see them to be isomorphic, so we label both by $M_4$.
			This observation applies as well to the remaining cases.
			
			\begin{itemize}[leftmargin=*] 
				\item Case 4.a
				
				\noindent	
				\begin{minipage}{\textwidth}
					\begin{minipage}{.32\textwidth}
						\centering
						\begin{overpic}{\figsdir/050.pdf}    
							\put (48,21) {\scalebox{0.8}{$z_1$}}
							\put (65,21) {\scalebox{0.8}{$z_2$}} 
							\put (28,21) {\scalebox{0.8}{$z_3$}} 
							
							\put (82,7) {\scalebox{0.8}{$z_6$}} 
							\put (82,25) {\scalebox{0.8}{$z_7$}}
							\put (12,7) {\scalebox{0.8}{$z_8$}} 
							\put (12,25) {\scalebox{0.8}{$z_9$}}
							
							\put (32,9) {\scalebox{0.8}{$z_5$}}   
						\end{overpic}

					\end{minipage}\hspace{1em}
					\begin{minipage}{.32\textwidth} 
						\centering
						\begin{overpic}{\figsdir/051.pdf}     
							\put (15,33) {\scalebox{0.8}{$A$}}
							\put (40,25) {\scalebox{0.7}{$B$}}
							\put (57,11) {\scalebox{0.7}{$C$}}
							\put (80,13) {\scalebox{0.7}{$D$}}
							\put (80,46) {\scalebox{0.7}{$E$}}
							\put (15,46) {\scalebox{0.7}{$F$}} 
						\end{overpic}

					\end{minipage}\hspace{1em}
					\begin{minipage}{.25\textwidth} 
						\centering  
						$\left(\begin{smallmatrix}
						0 & 0 & 1 &\      &   &\     &   &   \\
						0 & 1 & 0 &\      &   &\     &   &  \\
						1 & 1 & 1 &\      &   &\     &   &   \\
						1 &   &   &\      &   &\     &   &   \\  
						1 &   &   &\    2 &   &\     &   &   \\ 
						&   &   &\      & 2 &\     &   &   \\  
						&   &   &\      &   &\   2 &   &   \\ 
						&   &   &\      &   &\     & 2 &   \\
						&   &   &\      &   &\     &   & 2 \\
						\end{smallmatrix} \right)$ 
					\end{minipage}
					\\
					\noindent \begin{minipage}[t]{.32\textwidth}
						\centering
						$\Aq {T_4} a$			
					\end{minipage}\hspace{1em}
					\begin{minipage}[t]{.32\textwidth} 
						\centering
						$M_4$			
					\end{minipage}\hspace{1em}
					\begin{minipage}[t]{.25\textwidth} 
						\centering 	
						$A_4$ \\
					\end{minipage}
					\captionof{figure}{} \label{figure-13}
				\end{minipage}
				\vspace{0.5em}

				\item Case 4.b
				
				\noindent	
				\begin{minipage}{\textwidth}
					\begin{minipage}{.32\textwidth}
						\centering
						\begin{overpic}{\figsdir/054.pdf}   
							\put (48,21) {\scalebox{0.8}{$z_1$}}
							\put (65,21) {\scalebox{0.8}{$z_2$}} 
							\put (28,21) {\scalebox{0.8}{$z_3$}} 
							
							\put (82,7) {\scalebox{0.8}{$z_6$}} 
							\put (82,25) {\scalebox{0.8}{$z_7$}}
							\put (12,7) {\scalebox{0.8}{$z_8$}} 
							\put (12,25) {\scalebox{0.8}{$z_9$}}

							\put (64,7) {\scalebox{0.8}{$z_5$}} 
						\end{overpic} 
					\end{minipage}\hspace{1em}
					\begin{minipage}{.32\textwidth} 
						\centering
						\begin{overpic}{\figsdir/055.pdf}    
							\put (15,33) {\scalebox{0.8}{$A$}}
							\put (39,34) {\scalebox{0.7}{$B$}}
							\put (55,11) {\scalebox{0.7}{$C$}}
							\put (80,13) {\scalebox{0.7}{$D$}}
							\put (80,46) {\scalebox{0.7}{$E$}}
							\put (15,46) {\scalebox{0.7}{$F$}}
						\end{overpic} 
					\end{minipage}\hspace{1em}
					\begin{minipage}{.25\textwidth} 
						\centering 
						$\left(\begin{smallmatrix}
						0 & 0 & 1 &\      &   &\     &   &   \\
						0 & 1 & 0 &\      &   &\     &   &   \\
						1 & 1 & 1 &\      &   &\     &   &   \\
						1 &   &   &\      &   &\     &   &   \\  
						1 &   &   &\    2 &   &\     &   &   \\ 
						&   &   &\      & 2 &\     &   &   \\  
						&   &   &\      &   &\   2 &   &   \\ 
						&   &   &\      &   &\     & 2 &   \\
						&   &   &\      &   &\     &   & 2 \\
						\end{smallmatrix} \right)$ 
					\end{minipage}
					\\
					\noindent \begin{minipage}[t]{.32\textwidth}
						\centering
						$\Aq {T_4} b$			
					\end{minipage}\hspace{1em}
					\begin{minipage}[t]{.32\textwidth} 
						\centering
						$M_4$			
					\end{minipage}\hspace{1em}
					\begin{minipage}[t]{.25\textwidth} 
						\centering 	
						$A_4$ \\
					\end{minipage}
					\captionof{figure}{} \label{figure-14}
				\end{minipage}
				\vspace{0.5em }
			\end{itemize}
			
			\item Fifth case:
			
			\begin{itemize}[leftmargin=*] 
				\item
				Case 5.a: if $y_1 + y_2 + y_4 < y_3 < y_1 + 2y_2 + y_4$, then $M_5$ in Figure~\ref{figure-15} realizes $\mH$.

				\vspace{0.5em}
				
				\noindent	
				\begin{minipage}{\textwidth}
					\begin{minipage}{.32\textwidth}
						\centering
						\begin{overpic}{\figsdir/056.pdf}    
							\put (46,21) {\scalebox{0.8}{$z_1$}}
							\put (76,21) {\scalebox{0.8}{$z_2$}} 
							\put (25,21) {\scalebox{0.8}{$z_3$}} 
							\put (62,21) {\scalebox{0.8}{$z_4$}} 
							\put (94,7) {\scalebox{0.8}{$z_6$}} 
							\put (94,25) {\scalebox{0.8}{$z_7$}}
							\put (-1,7) {\scalebox{0.8}{$z_8$}} 
							\put (1,25) {\scalebox{0.8}{$z_9$}}
							
							\put (31,8) {\scalebox{0.8}{$z_5$}}  
						\end{overpic} 
					\end{minipage}\hspace{1em}
					\begin{minipage}{.32\textwidth} 
						\centering
						\begin{overpic}{\figsdir/057.pdf}    
							\put (15,33) {\scalebox{0.8}{$A$}}
							\put (40,25) {\scalebox{0.7}{$B$}}
							\put (56,11) {\scalebox{0.7}{$C$}}
							\put (80,13) {\scalebox{0.7}{$D$}}
							\put (80,46) {\scalebox{0.7}{$E$}}
							\put (15,46) {\scalebox{0.7}{$F$}} 
						\end{overpic} 
					\end{minipage}\hspace{1em}
					\begin{minipage}{.25\textwidth} 
						\centering 
						$\left(\begin{smallmatrix}
						0 & 0 & 1 &\   0   &   &   &\     &   &   \\ 
						0 & 1 & 0 &\   1/2 &   &   &\     &   &   \\ 
						1 & 1 & 1 &\   1   &   &   &\     &   &   \\
						1 &   &   &\       &   &   &\     &   &   \\  
						1 &   &   &\       & 2 &   &\     &   &   \\ 
						&   &   &\       &   & 2 &\     &   &   \\  
						&   &   &\       &   &   &\   2 &   &   \\ 
						&   &   &\       &   &   &\     & 2 &   \\
						&   &   &\       &   &   &\     &   & 2 \\
						\end{smallmatrix} \right)$
					\end{minipage}
					\\
					\noindent \begin{minipage}[t]{.32\textwidth}
						\centering
						$\Aq {T_5} a$			
					\end{minipage}\hspace{1em}
					\begin{minipage}[t]{.32\textwidth} 
						\centering
						$M_5$			
					\end{minipage}\hspace{1em}
					\begin{minipage}[t]{.25\textwidth} 
						\centering 	
						$\det A_5 = 16$
					\end{minipage}
					\captionof{figure}{} \label{figure-15}
				\end{minipage}
				\vspace{0.5em}
				
				\item	Case 5.b: if $y_1 + y_2 + y_4 < y_3 < 2y_1 + y_2 + y_4$, then $M_5$ in Figure~\ref{figure-16} realizes $\mH$.
				
				\vspace{0.5em}
				
				\noindent	
				\begin{minipage}{\textwidth}
					\begin{minipage}{.32\textwidth}
						\centering
						\begin{overpic}{\figsdir/058.pdf}    
							\put (48,21) {\scalebox{0.8}{$z_1$}}
							\put (70,21) {\scalebox{0.8}{$z_2$}} 
							\put (18,21) {\scalebox{0.8}{$z_3$}} 
							\put (32,21) {\scalebox{0.8}{$z_4$}} 
							\put (62,9) {\scalebox{0.8}{$z_5$}} 
							\put (93,7) {\scalebox{0.8}{$z_6$}} 
							\put (92,25) {\scalebox{0.8}{$z_7$}}
							\put (-1,7) {\scalebox{0.8}{$z_8$}} 
							\put (2,25) {\scalebox{0.8}{$z_9$}} 
						\end{overpic} 
					\end{minipage}\hspace{1em}
					\begin{minipage}{.32\textwidth} 
						\centering
						\begin{overpic}{\figsdir/059.pdf}    
							\put (15,33) {\scalebox{0.8}{$A$}}
							\put (39,34) {\scalebox{0.7}{$B$}}
							\put (55,11) {\scalebox{0.7}{$C$}}
							\put (80,13) {\scalebox{0.7}{$D$}}
							\put (80,46) {\scalebox{0.7}{$E$}}
							\put (15,46) {\scalebox{0.7}{$F$}} 
						\end{overpic} 
					\end{minipage}\hspace{1em}
					\begin{minipage}{.25\textwidth} 
						\centering 
						$\left(\begin{smallmatrix}
						0 & 0 & 1 &\ 1/2 &   &   &\     &   &   \\ 
						0 & 1 & 0 &\   0 &   &   &\     &   &   \\ 
						1 & 1 & 1 &\   1 &   &   &\     &   &   \\
						1 &   &   &\     &   &   &\     &   &   \\  
						1 &   &   &\     & 2 &   &\     &   &   \\ 
						&   &   &\     &   & 2 &\     &   &   \\  
						&   &   &\     &   &   &\   2 &   &   \\ 
						&   &   &\     &   &   &\     & 2 &   \\
						&   &   &\     &   &   &\     &   & 2 \\
						\end{smallmatrix} \right)$ 
					\end{minipage}
					\\
					\noindent \begin{minipage}[t]{.32\textwidth}
						\centering
						$\Aq {T_5} b$			
					\end{minipage}\hspace{1em}
					\begin{minipage}[t]{.32\textwidth} 
						\centering
						$M_5$			
					\end{minipage}\hspace{1em}
					\begin{minipage}[t]{.25\textwidth} 
						\centering 	
						$A_5$ \\
					\end{minipage}
					\captionof{figure}{} \label{figure-16}
				\end{minipage}
				\vspace{0.5em}
				
			\end{itemize}
			
			\item Sixth case:
			
			\begin{itemize}[leftmargin=*] 
				\item
				
				Case 6.a: if $y_3 = y_1 + 2y_2 + y_4$, then $M_6$ in Figure~\ref{figure-17} realizes $\mH$. The edge $t_2$ was contracted.
				
				\noindent	
				\begin{minipage}{\textwidth}
					\begin{minipage}{.32\textwidth}
						\centering
						\begin{overpic}{\figsdir/060.pdf}    
							\put (48,21) {\scalebox{0.8}{$z_1$}} 
							\put (23,21) {\scalebox{0.8}{$z_3$}} 
							\put (68,21) {\scalebox{0.8}{$z_4$}} 
							\put (34,7) {\scalebox{0.8}{$z_5$}} 
							\put (94,7) {\scalebox{0.8}{$z_6$}} 
							\put (94,25) {\scalebox{0.8}{$z_7$}}
							\put (-1,7) {\scalebox{0.8}{$z_8$}} 
							\put (2,25) {\scalebox{0.8}{$z_9$}} 
						\end{overpic} 
					\end{minipage}\hspace{1em}
					\begin{minipage}{.32\textwidth} 
						\centering
						\begin{overpic}{\figsdir/061.pdf}    
							\put (15,33) {\scalebox{0.8}{$A$}}
							\put (40,25) {\scalebox{0.7}{$B$}}
							\put (56,11) {\scalebox{0.7}{$C$}}
							\put (80,13) {\scalebox{0.7}{$D$}}
							\put (80,46) {\scalebox{0.7}{$E$}}
							\put (15,46) {\scalebox{0.7}{$F$}} 
						\end{overpic} 
					\end{minipage}\hspace{1em}
					\begin{minipage}{.25\textwidth} 
						\centering 
						$\left(\begin{smallmatrix}
						0  & 1 &\ 0   &   &   &\     &   &   \\ 
						0  & 0 &\ 1/2 &   &   &\     &   &   \\ 
						1  & 1 &\   1 &   &   &\     &   &   \\
						1  &   &\     &   &   &\     &   &   \\  
						1  &   &\     & 2 &   &\     &   &   \\ 
						&   &\     &   & 2 &\     &   &   \\  
						&   &\     &   &   &\   2 &   &   \\ 
						&   &\     &   &   &\     & 2 &   \\
						&   &\     &   &   &\     &   & 2 \\
						\end{smallmatrix} \right)$ 
					\end{minipage}
					\\
					\noindent \begin{minipage}[t]{.32\textwidth}
						\centering
						$\Aq {T_6} a$			
					\end{minipage}\hspace{1em}
					\begin{minipage}[t]{.32\textwidth} 
						\centering
						$M_6$			
					\end{minipage}\hspace{1em}
					\begin{minipage}[t]{.25\textwidth} 
						\centering 	
						$A_6$ 
					\end{minipage}
					\captionof{figure}{} \label{figure-17}
				\end{minipage}
				\vspace{0.5em}
				
				\item Case 6.b: if $y_3 = 2y_1 + y_2 + y_4$, then $M_6$ in Figure~\ref{figure-18} realizes $\mH$. The edge $t_3$ was contracted.
				
				\noindent	
				\begin{minipage}{\textwidth}
					\begin{minipage}{.32\textwidth}
						\centering
						\begin{overpic}{\figsdir/062.pdf}    
							\put (48,21) {\scalebox{0.8}{$z_1$}}
							\put (70,21) {\scalebox{0.8}{$z_2$}} 
							\put (26,21) {\scalebox{0.8}{$z_4$}} 
							\put (64,7) {\scalebox{0.8}{$z_5$}} 
							\put (94,7) {\scalebox{0.8}{$z_6$}} 
							\put (94,25) {\scalebox{0.8}{$z_7$}}
							\put (-1,7) {\scalebox{0.8}{$z_8$}} 
							\put (2,25) {\scalebox{0.8}{$z_9$}} 
						\end{overpic} 
					\end{minipage}\hspace{1em}
					\begin{minipage}{.32\textwidth} 
						\centering
						\begin{overpic}{\figsdir/063.pdf}    
							\put (15,33) {\scalebox{0.8}{$A$}}
							\put (39,34) {\scalebox{0.7}{$B$}}
							\put (55,11) {\scalebox{0.7}{$C$}}
							\put (80,13) {\scalebox{0.7}{$D$}}
							\put (80,46) {\scalebox{0.7}{$E$}}
							\put (15,46) {\scalebox{0.7}{$F$}}  
						\end{overpic} 
					\end{minipage}\hspace{1em}
					\begin{minipage}{.25\textwidth} 
						\centering 
						$\left(\begin{smallmatrix}
						0 & 0 &\  1/2 &   &   &\     &   &   \\
						0 & 1 &\    0 &   &   &\     &   &   \\
						1 & 1 &\    1 &   &   &\     &   &   \\
						1 &   &\      &   &   &\     &   &   \\  
						1 &   &\      & 2 &   &\     &   &   \\  
						&   &\      &   & 2 &\     &   &   \\ 
						&   &\      &   &   &\   2 &   &   \\ 
						&   &\      &   &   &\     & 2 &   \\
						&   &\      &   &   &\     &   & 2 \\
						\end{smallmatrix} \right)$ 
					\end{minipage}
					\\
					\noindent \begin{minipage}[t]{.32\textwidth}
						\centering
						$\Aq {T_6} b$			
					\end{minipage}\hspace{1em}
					\begin{minipage}[t]{.32\textwidth} 
						\centering
						$M_6$			
					\end{minipage}\hspace{1em}
					\begin{minipage}[t]{.25\textwidth} 
						\centering 	
						$A_6$
					\end{minipage}
					\captionof{figure}{} \label{figure-18}
				\end{minipage}
				\vspace{0.5em}
				
			\end{itemize}
			
			\item Seventh case:
			
			\begin{itemize}[leftmargin=*] 
				\item
				Case 7.a: if $y_1 + 2y_2 + y_4 < y_3 < y_1 + 2y_2 + y_4 + y_6$, then $M_7$ in Figure~\ref{figure-19} realizes $\mH$. To obtain this gluing datum from 6.a we have swapped branch copies 2 and 3 in the branch above $t_6$, and then regrown $t_2$.
				
				\vspace{0.5em}
				
				\noindent	
				\begin{minipage}{\textwidth}
					\begin{minipage}{.32\textwidth}
						\centering
						\begin{overpic}{\figsdir/056.pdf}    
							\put (46,21) {\scalebox{0.8}{$z_1$}}
							\put (76,21) {\scalebox{0.8}{$z_2$}} 
							\put (25,21) {\scalebox{0.8}{$z_3$}} 
							\put (62,21) {\scalebox{0.8}{$z_4$}} 
							\put (94,7) {\scalebox{0.8}{$z_6$}} 
							\put (94,25) {\scalebox{0.8}{$z_7$}}
							\put (-1,7) {\scalebox{0.8}{$z_8$}} 
							\put (1,25) {\scalebox{0.8}{$z_9$}}
							
							\put (31,8) {\scalebox{0.8}{$z_5$}}  
						\end{overpic} 
					\end{minipage}\hspace{1em}
					\begin{minipage}{.32\textwidth} 
						\centering
						\begin{overpic}{\figsdir/065.pdf}    
							\put (15,33) {\scalebox{0.8}{$A$}}
							\put (40,25) {\scalebox{0.7}{$B$}}
							\put (56,34) {\scalebox{0.7}{$C$}}
							\put (72,34) {\scalebox{0.7}{$D$}}
							\put (80,46) {\scalebox{0.7}{$E$}}
							\put (15,46) {\scalebox{0.7}{$F$}}  
						\end{overpic} 
					\end{minipage}\hspace{1em}
					\begin{minipage}{.25\textwidth} 
						\centering 
						$\left(\begin{smallmatrix}
						0 & 0 & 1 &\   0 &   &   &\     &   &   \\ 
						0 & 0 & 0 &\ 1/2 &   &   &\     &   &   \\ 
						1 & 1 & 1 &\   1 &   &   &\     &   &   \\
						1 &   &   &\     &   &   &\     &   &   \\  
						1 &   &   &\     & 2 &   &\     &   &   \\ 
						& 1 &   &\     &   & 2 &\     &   &   \\  
						& 1 &   &\     &   &   &\   2 &   &   \\ 
						&   &   &\     &   &   &\     & 2 &   \\
						&   &   &\     &   &   &\     &   & 2 \\
						\end{smallmatrix} \right)$ 
					\end{minipage}
					\\
					\noindent \begin{minipage}[t]{.32\textwidth}
						\centering
						$\Aq {T_7} a$			
					\end{minipage}\hspace{1em}
					\begin{minipage}[t]{.32\textwidth} 
						\centering
						$M_7$			
					\end{minipage}\hspace{1em}
					\begin{minipage}[t]{.25\textwidth} 
						\centering 	
						$\det A_7 = -16$
					\end{minipage}
					\captionof{figure}{} \label{figure-19}
				\end{minipage}
				\vspace{0.5em}

				\item Case 7.b: if $2y_1 + y_2 + y_4 < y_3 < 2y_1 + y_2 + y_4 + y_8$, then $M_7$ in Figure~\ref{figure-20} realizes $\mH$. This was obtained from 6.b by swapping branch copies 2 and 3 in the branch above $t_9$, and then regrowing $t_2$. 
				
				\noindent	
				\begin{minipage}{\textwidth}
					\begin{minipage}{.32\textwidth}
						\centering
						\begin{overpic}{\figsdir/058.pdf}    
							\put (48,21) {\scalebox{0.8}{$z_1$}}
							\put (70,21) {\scalebox{0.8}{$z_2$}} 
							\put (18,21) {\scalebox{0.8}{$z_3$}} 
							\put (32,21) {\scalebox{0.8}{$z_4$}} 
							\put (62,9) {\scalebox{0.8}{$z_5$}} 
							\put (93,7) {\scalebox{0.8}{$z_6$}} 
							\put (92,25) {\scalebox{0.8}{$z_7$}}
							\put (-1,7) {\scalebox{0.8}{$z_8$}} 
							\put (2,25) {\scalebox{0.8}{$z_9$}}   
						\end{overpic} 
					\end{minipage}\hspace{1em}
					\begin{minipage}{.32\textwidth} 
						\centering
						\begin{overpic}{\figsdir/071.pdf}    
							\put (23,34) {\scalebox{0.8}{$A$}}
							\put (39,34) {\scalebox{0.7}{$B$}}
							\put (55,11) {\scalebox{0.7}{$C$}}
							\put (80,13) {\scalebox{0.7}{$D$}}
							\put (80,46) {\scalebox{0.7}{$E$}}
							\put (15,46) {\scalebox{0.7}{$F$}}  
						\end{overpic} 
					\end{minipage}\hspace{1em}
					\begin{minipage}{.25\textwidth} 
						\centering 
						$\left(\begin{smallmatrix}
						0 & 0 & 0 &\ 1/2 &   &   &\     &   &   \\
						0 & 1 & 0 &\   0 &   &   &\     &   &   \\
						1 & 1 & 1 &\   1 &   &   &\     &   &   \\
						1 &   &   &\     &   &   &\     &   &   \\  
						1 &   &   &\     & 2 &   &\     &   &   \\  
						&   &   &\     &   & 2 &\     &   &   \\ 
						&   &   &\     &   &   &\   2 &   &   \\ 
						&   & 1 &\     &   &   &\     & 2 &   \\
						&   & 1 &\     &   &   &\     &   & 2 \\
						\end{smallmatrix} \right)$ 
					\end{minipage}
					\\
					\noindent \begin{minipage}[t]{.32\textwidth}
						\centering
						$\Aq {T_7} b$			
					\end{minipage}\hspace{1em}
					\begin{minipage}[t]{.32\textwidth} 
						\centering
						$M_7$			
					\end{minipage}\hspace{1em}
					\begin{minipage}[t]{.25\textwidth} 
						\centering 	
						$A_7$ 
					\end{minipage}
					\captionof{figure}{} \label{figure-20}
				\end{minipage}
				\vspace{0.5em}
			\end{itemize}

			\item Eighth case:
			
			\begin{itemize}[leftmargin=*] 
				\item
				Case 8.a: if $y_3 = y_1 + 2y_2 + y_4 + y_6$, then $M_8$ in Figure~\ref{figure-21} realizes $\mH$. The edge $t_6$ was contracted.
				
				\noindent	
				\begin{minipage}{\textwidth}
					\begin{minipage}{.32\textwidth}
						\centering
						\begin{overpic}{\figsdir/068.pdf}    
							\put (46,21) {\scalebox{0.8}{$z_1$}}
							\put (76,21) {\scalebox{0.8}{$z_2$}} 
							\put (25,21) {\scalebox{0.8}{$z_3$}} 
							\put (62,21) {\scalebox{0.8}{$z_4$}}  
							\put (92,25) {\scalebox{0.8}{$z_7$}}
							\put (-1,7) {\scalebox{0.8}{$z_8$}} 
							\put (1,25) {\scalebox{0.8}{$z_9$}}
							
							\put (31,8) {\scalebox{0.8}{$z_5$}}     
						\end{overpic} 
					\end{minipage}\hspace{1em}
					\begin{minipage}{.32\textwidth} 
						\centering
						\begin{overpic}{\figsdir/069.pdf}    
							\put (15,33) {\scalebox{0.8}{$A$}}
							\put (40,25) {\scalebox{0.7}{$B$}}
							\put (56,34) {\scalebox{0.7}{$C$}}
							\put (72,34) {\scalebox{0.7}{$D$}}
							\put (81,47) {\scalebox{0.7}{$E$}}
							\put (15,46) {\scalebox{0.7}{$F$}}  
						\end{overpic} 
					\end{minipage}\hspace{1em}
					\begin{minipage}{.25\textwidth} 
						\centering 
						$\left(\begin{smallmatrix}
						0 & 0 & 1 &\   0 &   &\      &   &   \\ 
						0 & 0 & 0 &\ 1/2 &   &\      &   &   \\ 
						1 & 1 & 1 &\   1 &   &\      &   &   \\
						1 &   &   &\     &   &\      &   &   \\  
						1 &   &   &\     & 2 &\      &   &   \\ 
						& 1 &   &\     &   &\      &   &   \\  
						& 1 &   &\     &   &\    2 &   &   \\ 
						&   &   &\     &   &\      & 2 &   \\
						&   &   &\     &   &\      &   & 2 \\
						\end{smallmatrix} \right)$ 
					\end{minipage}
					\\
					\noindent \begin{minipage}[t]{.32\textwidth}
						\centering
						$\Aq {T_8} a$			
					\end{minipage}\hspace{1em}
					\begin{minipage}[t]{.32\textwidth} 
						\centering
						$M_8$			
					\end{minipage}\hspace{1em}
					\begin{minipage}[t]{.25\textwidth} 
						\centering 	
						$A_8$
					\end{minipage}
					\captionof{figure}{} \label{figure-21}
				\end{minipage}
				\vspace{0.5em}
				
				\item
				Case 8.b: if $y_3 = 2y_1 + y_2 + y_4 + y_8$, then $M_8$ in Figure~\ref{figure-22} realizes $\mH$. The edge $t_8$ was contracted.
				
				\noindent	
				\begin{minipage}{\textwidth}
					\begin{minipage}{.32\textwidth}
						\centering
						\begin{overpic}{\figsdir/074.pdf}    
							\put (48,21) {\scalebox{0.8}{$z_1$}}
							\put (70,21) {\scalebox{0.8}{$z_2$}} 
							\put (18,21) {\scalebox{0.8}{$z_3$}} 
							\put (32,21) {\scalebox{0.8}{$z_4$}} 
							\put (62,9) {\scalebox{0.8}{$z_5$}} 
							\put (93,7) {\scalebox{0.8}{$z_6$}} 
							\put (92,25) {\scalebox{0.8}{$z_7$}}
							\put (2,25) {\scalebox{0.8}{$z_9$}} 
						\end{overpic} 
					\end{minipage}\hspace{1em}
					\begin{minipage}{.32\textwidth} 
						\centering
						\begin{overpic}{\figsdir/075.pdf}    
							\put (23,34) {\scalebox{0.8}{$A$}}
							\put (39,34) {\scalebox{0.7}{$B$}}
							\put (55,11) {\scalebox{0.7}{$C$}}
							\put (80,13) {\scalebox{0.7}{$D$}}
							\put (80,46) {\scalebox{0.7}{$E$}}
							\put (15,46) {\scalebox{0.7}{$F$}}   
						\end{overpic} 
					\end{minipage}\hspace{1em}
					\begin{minipage}{.25\textwidth} 
						\centering 
						$\left(\begin{smallmatrix}
						0 & 0 & 0 &\ 1/2 &   &   &\     &   \\
						0 & 1 & 0 &\   0 &   &   &\     &   \\
						1 & 1 & 1 &\   1 &   &   &\     &   \\
						1 &   &   &\     &   &   &\     &   \\  
						1 &   &   &\     & 2 &   &\     &   \\  
						&   &   &\     &   & 2 &\     &   \\ 
						&   &   &\     &   &   &\   2 &   \\ 
						&   & 1 &\     &   &   &\     &   \\
						&   & 1 &\     &   &   &\     & 2 \\
						\end{smallmatrix} \right)$ 
					\end{minipage}
					\\
					\noindent \begin{minipage}[t]{.32\textwidth}
						\centering
						$\Aq {T_8} b$			
					\end{minipage}\hspace{1em}
					\begin{minipage}[t]{.32\textwidth} 
						\centering
						$M_8$			
					\end{minipage}\hspace{1em}
					\begin{minipage}[t]{.25\textwidth} 
						\centering 	
						$A_8$ 
					\end{minipage}
					\captionof{figure}{} \label{figure-22}
				\end{minipage}
				\vspace{0.5em}
			\end{itemize}

			\item Ninth case:
			
			\begin{itemize}[leftmargin=*] 
				\item
				Case 9.a: if $y_1 + 2y_2 + y_4 + y_6 < y_3$, then $M_9$ in Figure~\ref{figure-23} realizes $\mH$.
				
				\vspace{0.5em}
				
				\noindent	
				\begin{minipage}{\textwidth}
					\begin{minipage}{.32\textwidth}
						\centering
						\begin{overpic}{\figsdir/056.pdf}    
							\put (46,21) {\scalebox{0.8}{$z_1$}}
							\put (76,21) {\scalebox{0.8}{$z_2$}} 
							\put (25,21) {\scalebox{0.8}{$z_3$}} 
							\put (62,21) {\scalebox{0.8}{$z_4$}} 
							\put (94,7) {\scalebox{0.8}{$z_6$}} 
							\put (94,25) {\scalebox{0.8}{$z_7$}}
							\put (-1,7) {\scalebox{0.8}{$z_8$}} 
							\put (1,25) {\scalebox{0.8}{$z_9$}}
							
							\put (31,8) {\scalebox{0.8}{$z_5$}}      
						\end{overpic} 
					\end{minipage}\hspace{1em}
					\begin{minipage}{.32\textwidth} 
						\centering
						\begin{overpic}{\figsdir/067.pdf}    
							\put (15,33) {\scalebox{0.8}{$A$}}
							\put (40,25) {\scalebox{0.7}{$B$}}
							\put (56,34) {\scalebox{0.7}{$C$}}
							\put (72,34) {\scalebox{0.7}{$D$}}
							\put (80,26) {\scalebox{0.7}{$E$}}
							\put (15,46) {\scalebox{0.7}{$F$}}   
						\end{overpic} 
					\end{minipage}\hspace{1em}
					\begin{minipage}{.25\textwidth} 
						\centering 
						$\left(\begin{smallmatrix}
						0 & 0 & 1 &\   0 &   &   &\     &   &   \\
						0 & 0 & 0 &\ 1/2 &   &   &\     &   &   \\
						1 & 1 & 1 &\   1 &   & 2 &\     &   &   \\
						1 &   &   &\     &   &   &\     &   &   \\  
						1 &   &   &\     & 2 &   &\     &   &   \\  
						& 1 &   &\     &   &   &\     &   &   \\ 
						& 1 &   &\     &   &   &\   2 &   &   \\ 
						&   &   &\     &   &   &\     & 2 &   \\
						&   &   &\     &   &   &\     &   & 2 \\
						\end{smallmatrix} \right)$ 
					\end{minipage}
					\\
					\noindent \begin{minipage}[t]{.32\textwidth}
						\centering
						$\Aq {T_9} a$			
					\end{minipage}\hspace{1em}
					\begin{minipage}[t]{.32\textwidth} 
						\centering
						$M_9$			
					\end{minipage}\hspace{1em}
					\begin{minipage}[t]{.25\textwidth} 
						\centering 	
						$\det A_9 = 16$
					\end{minipage}
					\captionof{figure}{} \label{figure-23}
				\end{minipage}
				\vspace{0.5em}
				
				\item	Case 9.b: if $2y_1 + y_2 + y_4 + y_8 < y_3$, then $M_9$ in Figure~\ref{figure-24} realizes $\mH$.
				
				\vspace{0.5em}
				
				\noindent	
				\begin{minipage}{\textwidth}
					\begin{minipage}{.32\textwidth}
						\centering
						\begin{overpic}{\figsdir/058.pdf}    
							\put (48,21) {\scalebox{0.8}{$z_1$}}
							\put (70,21) {\scalebox{0.8}{$z_2$}} 
							\put (18,21) {\scalebox{0.8}{$z_3$}} 
							\put (32,21) {\scalebox{0.8}{$z_4$}} 
							\put (62,9) {\scalebox{0.8}{$z_5$}} 
							\put (93,7) {\scalebox{0.8}{$z_6$}} 
							\put (92,25) {\scalebox{0.8}{$z_7$}}
							\put (-1,7) {\scalebox{0.8}{$z_8$}} 
							\put (2,25) {\scalebox{0.8}{$z_9$}} 
						\end{overpic} 
					\end{minipage}\hspace{1em}
					\begin{minipage}{.32\textwidth} 
						\centering
						\begin{overpic}{\figsdir/073.pdf}    
							\put (23,34) {\scalebox{0.8}{$A$}}
							\put (39,34) {\scalebox{0.7}{$B$}}
							\put (55,11) {\scalebox{0.7}{$C$}}
							\put (80,13) {\scalebox{0.7}{$D$}}
							\put (80,46) {\scalebox{0.7}{$E$}}
							\put (15,26) {\scalebox{0.7}{$F$}}   
						\end{overpic} 
					\end{minipage}\hspace{1em}
					\begin{minipage}{.25\textwidth} 
						\centering 
						$\left(\begin{smallmatrix}
						0 & 0 & 0 &\ 1/2 &   &   &\     &   &   \\
						0 & 1 & 0 &\   0 &   &   &\     &   &  \\
						1 & 1 & 1 &\   1 &   &   &\     & 2 &   \\
						1 &   &   &\     &   &   &\     &   &   \\  
						1 &   &   &\     & 2 &   &\     &   &   \\  
						&   &   &\     &   & 2 &\     &   &   \\ 
						&   &   &\     &   &   &\   2 &   &   \\ 
						&   & 1 &\     &   &   &\     &   &   \\
						&   & 1 &\     &   &   &\     &   & 2 \\
						\end{smallmatrix} \right)$ 
					\end{minipage}
					\\
					\noindent \begin{minipage}[t]{.32\textwidth}
						\centering
						$\Aq {T_9} b$			
					\end{minipage}\hspace{1em}
					\begin{minipage}[t]{.32\textwidth} 
						\centering
						$M_9$			
					\end{minipage}\hspace{1em}
					\begin{minipage}[t]{.25\textwidth} 
						\centering 	
						$A_9$ 
					\end{minipage}
					\captionof{figure}{} \label{figure-24}
				\end{minipage}
				\vspace{0.5em}
				
			\end{itemize}
			
		\end{itemize}
		
	\end{ex}

	
	
	
	\section{Deformation} \label{sec-limits}
	We visualize Example~\ref{ex-three-cycles-loop} as a \emph{movie} featuring a continuous deformation of the length function~$\ell_H$ (for an actual movie go here\footnote{https://mathsites.unibe.ch/jdraisma/MovieGenus4.mp4}). 
	The deformation path grows the length $y_3$ while leaving fixed the remaining lengths. The path moves in and through nine different
	cones of sources $C_M$. We use the full-rank property of $A_M$ to calculate $\ell_T = \inv A_M \ell_H$, which defines the tropical morphism $(M,\ell_T)$. As we grow the length~$y_3$, some lengths of $\ell_T$ shrink down to zero. We call the cases where a length of $\ell_T$ is zero a \emph{limit} in the deformation. Since we do not allow zero lengths in an edge-length function, we contract an edge of $T$ instead. 
	
	If we were to walk further past a limit while using the same gluing datum we would get negative entries in~$\inv A_M \ell_H$, which is not allowed in a length function. In order to go beyond we pass to a different gluing datum. In Example~\ref{ex-three-cycles-loop} the even cases are \emph{limit gluing datums}, as an edge is contracted. Around these limits there is a change in the gluing relations. In this section we show that understanding this change of gluing relations sets up a deformation procedure and gives a proof of Theorem~\ref{theorem-gonality}.

	\subsection{Limits} \label{sec-results-on-limits}
	We set up a framework to study limits. 
	Let $M = (T, d, \sim)$ be a gluing datum, and $t$ an edge of $T$.
	
	\begin{de}[Limit gluing datum] \label{def-limit-gluing}  The \emph{limit} of $M$ by contracting $t$, in short \emph{the limit at $t$}, is the gluing datum $M_0 = (T_0, d, \sim_0)$ given by the following data: 
		\begin{itemize} 
			\item $T_0$ is obtained by contracting $t$ in $T$. The ends $u, v$ of $t$ get identified with a vertex $w_0$ of $T_0$. Edges and vertices of $T$ different from $u, v$, and~$t$ correspond in a natural manner with edges and vertices of $T_0$.  
			\item $\sim_{0,{w_0}}$ equals the finest common coarsening of $\sim_u$ and $\sim_v$; and $\sim_{0,x}$ equals $\sim_x$ for $x \ne w_0$.
		\end{itemize}
		
	\end{de}
	
	For the remainder of this section, let $M_0 = (T_0, d, \sim_0)$ be the limit at $t$ of~$M$, giving rise to $G_0$, and $u$, $v$ be the ends of $t$ that contract to $w_0$ in~$V(T_0)$.
	Fix an edge $t$ of $T$. Let $\dtmor_0 \!:\! G_0 \to T_0$ be the \DTmor that arises from $\dtmor_M \!:\! G \to T$ by contracting $t$ in $T$ to obtain $T_0$, and contracting all the edges in $\inv \dtmor(t)$ to obtain~$G_0$. It is straightforward to see that $\dtmor_0$ is canonically isomorphic to $\dtmor_{M_0}$, where $M_0$ is the limit of $M$ at $t$. Namely, the classes representing vertices and edges of $G$ \emph{contract} to the classes representing vertices and edges of $G_0$.
	Note that a class $\class x i$ of $G$ contracts to $\class {x_0} j$ of $G_0$ if and only if $\class x i \subseteq \class {x_0} j$  and $x$ contracts to $x_0$.
	
	To prove that $M_0$ is indeed a gluing datum, we observe that verticality, refinement and connectedness are inherited.  We write $r_0$ for $r_{\dtmor_0}$. The RH-condition amounts to proving that $r_0(A_0) \ge 0$ for all $A_0$ in $V(G_0)$. This is true because	 $r_0$ is additive under contraction: 
	
	\begin{prop}[$r_\varphi$ under contraction] \label{prop-rphi-under-contraction}
		Let $A_0$ be in $\inv \dtmor_0(w_0)$, and $A_1, \dots, A_r$ be the vertices of $G$ that contract to~$A_0$. If $g(H(M)) = g(H(M_0))$, then: 
		\begin{align*}
		r_{0}(A_0) &= r_\varphi(A_1) + \dots + r_\varphi(A_r), \\
		\ch_{M_0} w_0 &= \ch_M u + \ch_M v. 
		\end{align*}
	\end{prop}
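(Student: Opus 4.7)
The plan is to reduce both identities to a single combinatorial equality, and to extract that equality from the hypothesis $g(H(M))=g(H(M_0))$ via an Euler-characteristic argument. First I observe that the second formula follows from the first: every vertex of $G$ above $u$ or $v$ contracts to some vertex of $G_0$ above $w_0$, so summing the first identity over all $A_0 \in \dtmor_0^{-1}(w_0)$ gives $\ch_{M_0} w_0 = \ch_M u + \ch_M v$. So the only real work is to prove the pointwise formula.

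For the first identity, I would start by partitioning $\{A_1,\dots,A_r\}$ into those above $u$ and those above $v$, and using that $\dtmor_0^{-1}(w_0)$ above $w_0$ is in bijection with connected components of the subgraph $S \subset G$ whose edge set is $\dtmor^{-1}(t)\cap E(G)$ and whose vertex set consists of all vertices of $G$ above $u$ or $v$. Write $S_{A_0}$ for the component corresponding to $A_0$; its vertex set is $\{A_1,\dots,A_r\}$, and its edges are the edges of $G$ above $t$ that contract to $A_0$. Let $n_i$ be the number of edges in $E(A_i)\cap \dtmor^{-1}(t)$, and let $|E_t^{A_0}| = |E(S_{A_0})|$; since every edge above $t$ in $S_{A_0}$ has both ends in $\{A_1,\dots,A_r\}$, we have $\sum_i n_i = 2|E_t^{A_0}|$. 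The valency identity $\vale {A_0} = \sum_i (\vale{A_i} - n_i)$ and the equalities $m_{\dtmor_0}(A_0) = \sum_{A_i \text{ above } u} m_\dtmor(A_i) = \sum_{A_i \text{ above } v} m_\dtmor(A_i)$, so $\sum_{i=1}^r m_\dtmor(A_i) = 2m_{\dtmor_0}(A_0)$, together with $\vale {w_0} = \vale u + \vale v - 2$, allow me to plug into the definition of $r_\dtmor$ and compute directly:
\[
r_0(A_0) - \sum_{i=1}^r r_\dtmor(A_i) = 2(r-1) - \sum_i n_i = 2(r-1) - 2|E_t^{A_0}|.
\]
Thus the identity reduces to showing $|E_t^{A_0}| = r - 1$, i.e.\ that each $S_{A_0}$ is a tree.

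This is where the hypothesis enters, and it is the only nontrivial step. Since dangling subgraphs are trees, $g(H(M)) = g(G)$ and $g(H(M_0)) = g(G_0)$, so the hypothesis becomes $g(G) = g(G_0)$. Contracting $S$ edge by edge, the standard Euler-characteristic relation gives
\[
g(G) - g(G_0) = |E(S)| - |V(S)| + c(S),
\]
where $c(S)$ is the number of connected components of $S$. Under the hypothesis the right-hand side vanishes, so $|E(S)| = |V(S)| - c(S)$, i.e.\ $S$ is a disjoint union of trees. Restricted to one component, $|E_t^{A_0}| = r - 1$, which completes the proof.

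The main obstacle is really just keeping track of the bookkeeping around the balancing condition and the edges above $t$; the genus hypothesis does the rest of the work neatly. A subtlety to address is that $S_{A_0}$ always contains at least one vertex above $u$ and at least one above $v$ (so the contracted component is not a single point): this follows because the class of $\sim_{0,w_0}$ defining $A_0$ is obtained by coarsening from both $\sim_u$ and $\sim_v$, hence meets nontrivial classes of each, so $r \geq 2$ and the formula is consistent.
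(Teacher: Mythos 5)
Your proof is correct, and it reaches the same crux as the paper's: the subgraph of $G$ contracting to $A_0$ is a tree, which is exactly where the hypothesis $g(H(M))=g(H(M_0))$ enters. The routes to the numerical identity differ, though. The paper restricts $\dtmor$ to the tree $G_{A_0}$ of elements contracting to $A_0$, views the restriction as a degree-$|A_0|$ \DTmor onto the one-edge tree on $\{u,v\}$, invokes Lemma~\ref{lemma-total-change} to get $\sum_B r_{\dtmor_{A_0}}(B)=2|A_0|-2$, and then uses the reformulation $r_\varphi(A)=2(m_\varphi(A)-1)-\sum_e(m_\varphi(e)-1)$ of Lemma~\ref{lemma-formula-rphi} to absorb the edges not above $t$. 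You instead work directly from the definition of $r_\varphi$, using $\vale {A_0}=\sum_i(\vale{A_i}-n_i)$, $\sum_i m_\varphi(A_i)=2m_{\dtmor_0}(A_0)$ and $\vale{w_0}=\vale u+\vale v-2$ to reduce everything to $|E(S_{A_0})|=r-1$, and you justify tree-ness via the global Euler-characteristic identity $g(G)-g(G_0)=|E(S)|-|V(S)|+c(S)$ rather than the paper's one-line remark that a contracted cycle would drop the genus. Your version is more elementary and self-contained (it bypasses Lemmas~\ref{lemma-formula-rphi} and~\ref{lemma-total-change}) and makes the role of the genus hypothesis quantitative; the paper's is shorter because it reuses machinery already established. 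Your closing observation that $r\ge 2$, since every class of the common coarsening contains a full $\sim_u$-class and a full $\sim_v$-class, is a worthwhile check that the paper leaves implicit.
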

	\begin{proof} 
		Let $G_A$ be the subgraph defined by the vertices and edges of $G$ that contract to~$A_0$. Namely, $V(G_A) = \{A_1, \dots, A_r\}$. It is a forest; otherwise we would have $g(H(M_0)) < g(H(M))$; and also connected, hence a tree. Restrict $\dtmor$ to $G_A$ to get a map $\dtmor_A$ onto the graph on $\{u, v\}$, joined by the single edge~$t$. This map is a \DTmor of degree~$|A_0|$. 
		By Lemma~\ref{lemma-total-change} the total change of $\dtmor_A$ is $\sum_{B \in V(G_A)} r_{\dtmor_A} (B) = 2|A_0| - 2$. Using Lemma~\ref{lemma-formula-rphi} we calculate:
		\begin{align*}
		\sum_{i=1}^r r_\varphi(A_i) &= \sum_{i = 1}^r (2|A_i| - 2)
		- \sum_{i=1}^r \sum_{\substack{e \in E(A_i) \\ \dtmor(e) \ne t}} (|e| - 1) 
		- 2 \cdot \sum_{\substack{e \in E(G_A)}} (|e| - 1)\\
		&= \sum_{B \in V(G_A)} r_{\dtmor_A} (B)
		- \sum_{i=1}^r \sum_{\substack{e \in E(A_i) \\ \dtmor(e) \ne t}} (|e| - 1) \\
		&= 2|A_0| - 2 - \sum_{\substack{e \in E(A_0)}} (|e| - 1) = r_{0}(A).
		\end{align*}
		Apply this formula over all vertices above $w_0$ to get the second result.
	\end{proof}
	\subsection{Inherited properties} \label{section-inherited-properties} 
	Now we study which properties of Definition~\ref{def-conditions} are inherited by limits. Let $M_0$, $G_0$, $u$, $v$, $w_0$, $\dtmor_0$ and $r_0$ be as in Subsection~\ref{sec-results-on-limits}.
	
	\begin{lm}[dangling in the limit] \label{lemma-dangling-limit} 
		If $g(H(M)) = g(H(M_0))$, then a vertex $A_0$ in $\inv \dtmor_0(w_0)$ is dangling if and only if all the $A_1, \dots, A_r$ in $V(G)$ that contract to $A_0$ are dangling. 	
	\end{lm}
	
	\begin{proof}
		Since $g(H(M)) = g(H(M_0))$, no cycle of $G$ is contracted in $G_0$. This implies that $A_0$ is not contained in a cycle of $G_0$ if and only if none of $A_1, \dots, A_r$ are contained in a cycle of~$G$.
	\end{proof}

	\begin{lm} \label{lemma-limit-dangling-no-glue}
		If $g(H(M))=g(H(M_0))$ and $M$ satisfies the dangling-no-glue condition, then $M_0$ satisfies the dangling-no-glue condition.
	\end{lm}
	
	\begin{proof}
		We only need to check that a dangling vertex $A_0$ above $w_0$ satisfies $|A_0| = 1$. Since $\sim_{w_0}$ is the finest common coarsening of $\sim_u$ and~$\sim_v$, we have $|A_0| > 1$ only if for some $A$ in $V(G)$ contracting to $A_0$ we have $|A|>1$. By Lemma~\ref{lemma-dangling-limit} there is no such $A$.
	\end{proof}
	
	\begin{lm}[non-dangling union] \label{lemma-class-union} Assume that $g(H(M))=g(H(M_0))$. Let $M$ satisfy the dangling-no-glue condition, let $A_0$ in $\inv \dtmor_0(w_0)$ be non-dangling, and let $A_1, \dots, A_{r}$ in $V(G)$ be the non-dangling vertices that contract to $A$. Then $A_0 = A_1 \cup \dots \cup A_{r}$, as subsets of~$[d]$.
	\end{lm}
	
	\begin{proof}
		This is equivalent to proving that for every $k$ in $A_0$ at least one of $B_u = \class u k$, $B_v = \class v k$ in $V(G)$ is non-dangling. Assume not, so by dangling-no-glue $|B_u|=|B_v|=1$, thus $B_u$, $B_v$ are not incident to other vertices above $u$ or $v$. Hence these two dangling classes are the only ones that contract to $A_0$. By Lemma~\ref{lemma-dangling-limit} $A_0$ is dangling, a contradiction.
	\end{proof}
	
	Note that in the previous lemmas we have used the fact that the gluing relations of $M_0$ agree with those of $M$ outside of $w_0$. From this fact we also get that if $M$ has an edge labelling $\lambda = (\lambda_T, \lambda_H)$, with $\lambda_T: E(T) \to [|E(T)|]$ and $\lambda_H: E(H(M)) \to [|E(H(M))|]$ (recall that $[m]$ means $\{1,\dots,m\}$), then they induce edge labellings $\lambda_{T,0}$ and $\lambda_{H,0}$ on $M_0$. It is clear how to do this for $T_0$. For calculating $\lambda_{H,0}(h_0)$, with $h_0$ in $E(H(M_0))$, choose any $e_0 \in h_0$. As edges of $G$ not above $u$, $v$, or $t$ correspond to the edges of $G_0$, there is an edge $e$ in $G$ that corresponds to $e_0$. Recall that $h(e)$ is the unique edge of $H(M)$ containing $e$. Set $\lambda_{H,0}(h_0) = \lambda_{H}(h(e))$. It is straightforward to see that this is well defined.
	
	Going one step further, if two gluing datums share a limit, then loosely speaking  their combinatorial information agrees outside the contracted edge and its ends. We define a condition on the edge orders that allows us to relate the edge-length matrices. Recall from the proof of Lemma~\ref{lemma-iso-classes-dtmor-gd} that an isomorphism $(\tau, \pi_x)$ from $M$ to some gluing datum $M'$ gives rise to a graph isomorphism $\gamma: G \to G'$. This $\gamma$ descends to a graph isomorphism $\wgamma: H(M) \to H(M')$.
	
	\begin{de}
		Let $\Aq M q = (\Aq T q, d, \Aq \sim q)$ be gluing datums with edge labellings $\Aq {\lambda_T} q : E(\Aq T q) \to [|E(\Aq T q)|]$ and $\Aq {\lambda_H} q : E(H(\Aq M q)) \to [|E(H(\Aq M q))|]$, for $q=1,2$. Let $\Aq {M_0} q$ be the limit at $t_k = \inv{(\Aq {\lambda_T} q)}(k)$ of $\Aq M q$, with $\Aq {\lambda_{T,0}} q$ and $\Aq {\lambda_{H,0}} q$ the induced labellings. If $|E(H(M_0))| \ge |E(H(\Aq M q))| - 1$, then we say that the labellings of $\Aq M 1$ and $\Aq M 2$ are \emph{compatible at $t_k$} if there is an isomorphism $(\tau, \pi_x)$ from $\Aq M 1 _0$ to~$\Aq M 2 _0$ such that $\Aq {\lambda_{T,0}} 2 = \Aq {\lambda_{T,0}} 1 \circ \inv \tau$ and $\Aq {\lambda_{H,0}} 2 = \Aq {\lambda_{H,0}} 1 \circ \inv \wgamma$.
	\end{de}
	
	The condition $|E(H(M_0))| \ge |E(H(\Aq M q))| - 1$ ensures that in $M_0$ at most one edge of $H(\Aq M q)$ is contracted. This is needed to get unicity at the end of Remark~\ref{remark-choose-one-labelling}.
	
	\begin{lm}[limit matrix] \label{lemma-limit-matrix-change} 
		Let $\Aq M 1$, $\Aq M 2$ be gluing datums with labellings compatible at $t_k$, and edge-length matrices $(a_{ij})$, $(b_{ij})$ respectively. Then $a_{i j} = b_{ij}$ for $j \ne k$. 
	\end{lm}
	
	
	\begin{proof}
		Restrict the edge-length map of $\Aq M 1$ to the subspace $\{x_k = 0\}$ to get the edge-length map of $\Aq {M_0} 1$. This amounts to deleting the $k$-th column of $(a_{ij})$. The matrix of the edge-length map of $\Aq {M_0} 1$ coincides with that of the edge length-map of $\Aq {M_0} 2$ because of the isomorphism and compatibility of labellings. Since the matrix of the edge length map of $\Aq {M_0} 2$ is obtained by deleting the $k$-th column of $(b_{ij})$, we get the result.   
	\end{proof}
	
	\begin{lm} \label{lemma-limit-full-rank} If $M$ has full-rank, then every limit of $M$ has full-rank. 
	\end{lm}
	
	\begin{lm} \label{lemma-limit-pass-once} If $M$ satisfies pass-once, then every limit of $M$ satisfies pass-once. 
	\end{lm}
	
	\begin{proof}
		This is a consequence of Lemma~\ref{lemma-limit-matrix-change} and Proposition~\ref{prop-adm-matrix}.
	\end{proof}
	
	\begin{re} \label{remark-choose-one-labelling}
		Note that if the limit of $M$ at $t$, and the limit of $M'$ at $t'$ are isomorphic, then a labelling $\lambda$ on $M$ induces a unique labelling $\lambda'$ on $M'$ such that the two labellings are compatible at $t_k$ with $k = \lambda_T(t)$. Indeed, induce $\lambda_0$ on $M_0$ from $\lambda$; the isomorphism from $M_0$ to $M'_0$ gives a labelling $\lambda'_0$ from $\lambda_0$; and there is a unique $\lambda'$ that induces $\lambda'_0$ on $M'_0$ such that $\lambda'_{T'}(t') = \lambda_T(t)$; and if edges $h'$, $h$ of $H(M')$, $H(M)$, respectively, were contracted, then $\lambda'_{H'}(h') = \lambda_H(h)$
	\end{re}
	
	\begin{lm} \label{lemma-limit-no-return} If $M$ satisfies no-return and pass-once, then a limit $M_0$ satisfies no-return if either $w_0$ is a leaf or $\ch w_0 \le 2$. 
	\end{lm}
	
	\begin{proof}
		Suppose that $A_0$ in $V(G_0)$, with $r=\nddeg A \ge 2$, violates no-return. Since $M$ satisfies no-return, then $A_0$ is above $w_0$. If $\vale w_0 = 1$ there is nothing to check, so assume $w_0$ is not a leaf. If $r_0(A_0) \le 1$, we get a contradiction with Lemma~\ref{lem-noreturn-rphi1} (r1 implies no-return). Since $\ch w_0 \le 2$, we have that $r_0(A_0) = 2$. As $A_0$ violates no-return, all the non-dangling edges $e_1, \dots, e_r$ incident to $A_0$ are above a single edge of $T$, thus $|e_1| + \dots + |e_r| \le |A|$, so $|A| \ge 2$. Hence, $r_0(A) = 2 = r - 2 + 2|A| - (|e_1| + \dots + |e_r|) \ge r- 2 + |A|$ gives $|A|=2$ and $r = 2$. So there is an edge of $H_0$ passing above $w_0$ by going through $A_0$, and twice above~$t$. Since $w_0$ is not a leaf, $M_0$ does not satisfy pass-once. On the other hand, Lemma~\ref{lemma-limit-pass-once} implies that $M_0$ satisfies pass-once, a contradiction.
	\end{proof}
	
	The condition of having $\ch w_0 \le 2$ when $w_0$ is not a leaf is satisfied when $M$ is change-minimal and has full-rank; in particular it is satisfied by full-dimensional $M$. Indeed, Proposition~\ref{prop-rphi-under-contraction} and the equality $\vale w_0 = \vale u + \vale v - 2$ together imply that
	\begin{align} 
	\label{eq-C} \tag{C}\ch w_0 + \vale w_0 - 3 = (\ch u + \vale u - 3) + (\ch v + \vale v - 3) + 1 = 1.
	\end{align}
	This means that $\ch w_0$ is 0,1,2 or 3. The last value implies that $w_0$ is a leaf. Thus:
	
	\begin{lm} \label{lemma-limit-full-dimensional}  Let $M$ be change-minimal with full-rank, and $M_0$ be a limit of $M$ with $g(H(M)) = g(H(M_0))$. Then $M_0$ has full-rank and satisfies dangling-no-glue, no-return, and pass-once. 
	\end{lm}
	
	\begin{proof}
		This is a consequence of Lemmas~\ref{lemma-limit-dangling-no-glue}, \ref{lemma-limit-full-rank}, \ref{lemma-limit-pass-once}, \ref{lemma-limit-no-return} and Equation~\eqref{eq-C}.
	\end{proof}

	\subsection{Deformation} \label{sec-walking-through-I}
	We introduce the deformation procedure. Let $\omega$ be a deformation path in the interior of some cone $C_H$. The deformation procedure proves that upon encountering a limit while deforming a tropical morphism $\tmor$ along $\omega$, as in Example~\ref{ex-three-cycles-loop}, it is possible to continue further by changing the combinatorial structure of the tropical morphism. This means passing to another gluing datum. The majority of the proof of this fact is deferred to Section~\ref{sec-constructions}. Here we use Lemma~\ref{lemma-limit-matrix-change} to reduce the proof to the problem of constructing full-dimensional gluing datums with a given limit. 
	
	Recall that $\mathbb{FD}_g$ is the set of full-dimensional \DTmors whose sources have genus~$g$. We abuse notation and write $M$ in $\mathbb{FD}_g$ for $\dtmor_M$ in $\mathbb{FD}_g$.
	Let $M_0$ be the limit at $t_1$ of some $M$ in $\mathbb{FD}_g$.  Denote by $\cstar {M_0}$ the set of isomorphism classes of gluing datums in $\mathbb{FD}_g$ that contract to a limit in the isomorphism class of $M_0$. By Remark~\ref{remark-choose-one-labelling} a choice of an edge labelling on $M$ induces an edge labelling compatible at $t_1$ on each element of~$\cstar {M_0}$. 
	
	Since $\omega$ is contained in a cone $C_H$, namely our walk does not change the combinatorial type of the source,
	assume that $H(M_0)$ and $H(M)$ are isomorphic. Thus, $C_{M_0}$ is a subset of $C_{H(M)}$.
	Denote by $|\ell_1 - \ell_2|$ the euclidean distance in~$\RR^{E(H)}$. We wish to prove:
	
	\begin{prop}[deformation I]  \label{proposition-walking-through-I}
		Let $M_0$ be the limit at $t_1$ of some full-dimensional gluing datum $M$, with $H(M_0)$ isomorphic to $H = H(M)$. Let $y_0 \in C_{M_0} \subset C_H$. 
		There exists a constant $\varepsilon(M_0, y_0)>0$ such that if $\hy$ is an element of $C_H \setminus C_{M_0}$ with $|y_0 - \hy| < \varepsilon(M_0, y_0)$, then there is $\hM$ in $\cstar {M_0}$ with $H(\hM)$ isomorphic to $H$ and~$\hy \in C_{\hM}$.
	\end{prop}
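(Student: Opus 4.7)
The plan is to reduce the proposition, via elementary linear algebra together with Lemma~\ref{lemma-limit-matrix-change}, to a combinatorial construction deferred to Section~\ref{sec-constructions}.

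Fix an edge labelling of $M$ that assigns the contracted edge $t_1$ the index $1$; by Remark~\ref{remark-choose-one-labelling} this induces a compatible labelling on every element of $\cstar{M_0}$. Since $M$ is full-dimensional, $A_M$ is a nonsingular square matrix, and we may write $z_0 = A_M^{-1} y_0$, so that $(z_0)_1 = 0$. For $\varepsilon$ small (and $y_0$ in the relative interior of $C_{M_0}$), the intersection of $C_H$ with the open ball of radius $\varepsilon$ around $y_0$ consists of points $\hy$ for which $\hat z := A_M^{-1}\hy$ still satisfies $\hat z_j > 0$ for all $j \ne 1$. If $\hat z_1 \ge 0$, then $\hat z \in \RRgo^{E(T)}$, so $\hy \in C_M$ and we take $\hM = M$. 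Otherwise $\hat z_1 < 0$, and we must exhibit a different~$\hM$.

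Section~\ref{sec-constructions} will produce such a non-isomorphic $\hM \in \cstar{M_0}$. By Lemma~\ref{lemma-limit-matrix-change} the matrices $A_M$ and $A_{\hM}$ coincide in all columns except the first; writing $\alpha, \beta$ for their respective first columns, the difference $A_{\hM} - A_M$ has first column $\beta - \alpha$ and all other columns zero. Setting $v := A_M^{-1}(\beta - \alpha)$, any $\hat z'$ with $A_{\hM}\hat z' = \hy$ satisfies $A_M \hat z' + (\beta - \alpha)\hat z'_1 = \hy$, whence
\[
\hat z' \;=\; A_M^{-1}\hy - \hat z'_1 \, v \;=\; \hat z - \hat z'_1 \, v.
\]
Reading off the first coordinate gives $\hat z'_1 = \hat z_1/(1 + v_1)$, and then $\hat z'_j = \hat z_j - \hat z'_1 v_j$ for $j \ne 1$. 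Hence $\hy \in C_{\hM}$ reduces to the single inequality $1 + v_1 < 0$, equivalently $(A_M^{-1}\beta)_1 < 0$; the remaining entries of $\hat z'$ stay positive for $\hy$ sufficiently close to $y_0$, by continuity.

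The main obstacle is therefore the construction, for each limit $M_0$, of $\hM \in \cstar{M_0}$ satisfying $(A_M^{-1}\beta)_1 < 0$. This is the balancing relation between determinants alluded to in the Introduction. Equation~\eqref{eq-C} constrains the local structure of $M_0$ at the contracted vertex $w_0$ to finitely many possibilities, and Section~\ref{sec-constructions} enumerates sixteen cases, in each constructing a companion gluing datum together with a verification of the required inequality. The present proposition is thus established modulo those constructions.
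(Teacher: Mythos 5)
Your argument is correct and is essentially the paper's: both proofs reduce the proposition, via Lemma~\ref{lemma-limit-matrix-change} and the observation that for $\hy$ close to $y_0$ only the coordinate of $A_{\hM}^{-1}(\hy)$ at $t_1$ can obstruct membership in $C_{\hM}$, to producing some $\hM$ in $\cstar{M_0}$ whose edge-length matrix has determinant of sign opposite to $\det A_M$ (the paper's Lemmas~\ref{lemma-opposite-sign} and~\ref{lemma-construct-next-cone}), with that construction deferred to Section~\ref{sec-constructions}. Your criterion $(A_M^{-1}\beta)_1<0$ equals $\det A_{\hM}/\det A_M<0$ by Cramer's rule, so your rank-one-update computation is the paper's adjugate computation in different clothing; just note that $\varepsilon(M_0,y_0)$ must be chosen uniformly over the finitely many elements of $\cstar{M_0}$ before $\hM$ is known, as the paper does explicitly.
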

	
	Proposition~\ref{proposition-walking-through-I} is called \emph{deformation I} because in the part II of this series of articles we enhance it to show that the number of distinct ways one can walk into a limit coincides with the number of ways one can walk out of it, under a suitable notion of multiplicity. This notion of multiplicity is suggested by the \emph{balancing condition} in Section~\ref{sec-constructions} of this article.
	
	We compute $\varepsilon(M_0, y_0)$ to have the property that if $y \in \RR^{E(H)}$ is in a ball $B$ centered at $y_0$ with radius $\varepsilon(M_0, y_0)$ and the labellings of the elements of $\cstar {M_0}$ are compatible at $t_1$, then given $\hM = (\hT, d, \hat \sim)$ in $\cstar {M_0}$ the only obstruction for $\hy \in B$ to also be in $C_{\hM}$ is that $\hy(t_1)$ is positive.
	For $\hM$ in $\cstar{M_0}$ write $\hA$ for $A_{\hM}$, and let $\hz_0 = \inv \hA(y_0) \in \RR^{E(\hT)}$. Denote by $\min^+ y$ the smallest positive entry of~$y$.  Every element of the open ball $B(\hz_0, \min^+ \hz_0) \subset \RR^{E(\hT)}$ has positive entries, except possibly the one corresponding to $t_1$. By continuity of $\inv \hA$ we can choose a number $\varepsilon(\hM, M_0, y_0) > 0$ such that the image under $\inv \hA$  of the open ball with centre $y_0$ and radius $\varepsilon(\hM, M_0, y_0) > 0$ lies in $B(\hz_0, \min^+ \hz_0)$. 
	We define
	\[\varepsilon(M_0, y_0) = \min_{\hM \in \cstar {M_0}}  \varepsilon(\hM, M_0, y_0). \] This is well defined and positive since $\mathbb{FD}_g$ is finite. By construction:
	
	\begin{lm} \label{lemma-yinM-iff-zPositive}
		Let $\hM$ be in $\cstar{M_0}$ and $\hy$ in $C_H$. Set $\hz = \inv \hA (\hy)$. If $|y_0 - \hy| < \varepsilon(M_0, \ell_0)$, then $\hy \in C_{\hM}$ if and only if $\hz(t_1) > 0$.
	\end{lm}
	
	Now we compute a formula for $\hz(t_1)$, showing that its sign depends only on  $M_0$, $\hy$, and crucially only on~$\det \hat A$. Let $\hA = (\hat a_{ij})$. Append a column of zeroes at the beginning of the edge-length matrix of~$M_0$ to obtain $A_0 = (a_{ij})$.  From Lemma \ref{lemma-limit-matrix-change} (limit matrix) it follows that $\hat a_{ij} = a_{ij}$ for all $i$ and $j \ge 2$. Hence, the first row of the adjugate matrix $\adj(\hat A)$ depends only on~$M_0$. Let $(c_1, c_2, \dots, c_{3g-3})$ be that first row.  Recall that the adjugate matrix satisfies:
	\[\adj(\hat A) \cdot \hat A = \det( \hat A) \cdot  I = \hat c \cdot I,\]
	where $\hat c = \det(\hat A)$.  This formula holds in general for gluing datums with square edge-length matrices. This gives that:
	\begin{align} 
	\label{eq-ast} \tag{$\ast$}
	\hat c &= \sum_{i = 1}^{3g-3} c_i \hat a_{i1},\\ 
	\label{eq-astast} \tag{$\ast \ast$}
	0 &= \sum_{i = 1}^{3g-3} c_i \hat a_{ij} = \sum_{i = 1}^{3g-3} c_i a_{ij}, \text{ for } j \ge 2.
	\end{align}
	
	Moreover, since $\hM$ is full-dimensional we have $\hc \ne 0$, so $\inv \hA = \frac 1 \hc \adj(\hA)$. Thus, 
	\begin{align*}  \label{eq-L} \tag{L1}
	\hz(t_1) 
	&= \dfrac 1 {\hat c} 
	\sum_{i = 1}^{3g-3} 
	c_i \, \hy(h_i).
	\end{align*}
	Hence, the sum depends only on $\hat c$, $\hy$ and on the $c_i$ (which depend on $M_0$). Write $c = \det A_M$ as well. Applying Equation~\eqref{eq-L} to $z(t_1) = (\inv A_M \hy)(t_1)$ we get that 
	\begin{align*}  
	z(t_1) 
	&= \dfrac 1 {c} 
	\sum_{i = 1}^{3g-3} 
	c_i \, \hy(h_i).
	\end{align*}
	Using Lemma~\ref{lemma-yinM-iff-zPositive} and comparing signs we obtain:

	\begin{lm} \label{lemma-opposite-sign}
		Assume that $|y_0 - \hy| < \varepsilon(M_0, \ell_0)$ and $\hy \not \in C_{M_0}$. If $\sgn \hc = \sgn c$, then $\hy \in C_{\hM}$ if and only if $\hy \in C_{M}$. If $\sgn \hc \not = \sgn c$, then $\hy \in C_{\hM}$ if and only if $\hy \not \in C_{M}$. 
	\end{lm}
	
	
	
	So we would like to construct full-dimensional gluing datums with prescribed limit at~$t_1$ and a prescribed sign for $\det A_M$.
	
	\begin{lm} \label{lemma-construct-next-cone}
		If $M$ is in $\cstar{M_0}$, the edge labelling on $\cstar{M_0}$ is compatible at $t_1$, and $H(M_0)$ is isomorphic to $H(M)$, then there is $\hM$ in $\cstar{M_0}$ with $\sgn \det A_M \not = \sgn \det A_{\hM}$.
		
	\end{lm}
	
	Our constructive proof of Lemma~\ref{lemma-construct-next-cone} is the main achievement of this article. This is carried out in Section~\ref{sec-constructions}. Proposition~\ref{proposition-walking-through-I} (deformation I) follows easily from Lemma~\ref{lemma-construct-next-cone}:
	
	\begin{proof}[Proof of Proposition~\ref{proposition-walking-through-I} (deformation I)] If $\hy \in C_M$, then we are done, just take $\hM = M$. Otherwise $\hy \not \in C_M$. By Lemma~\ref{lemma-construct-next-cone} there is a gluing datum $\hM$ in $\cstar{M_0}$ with $\sgn \hc \not = \sgn c$. By Lemma~\ref{lemma-opposite-sign} we have that~$\hy \in C_{\hM}$. 
	\end{proof}

	\subsection{Proof of main result} \label{section-proof-main-result} 
	To close this section, we put together the deformation procedure of Proposition~\ref{proposition-walking-through-I} and the initial families of Theorem~\ref{theorem-cd} to derive a proof of Theorem~\ref{theorem-gonality}.
	%
	%
	%
	%
	Our deformation machinery is designed to work with full-dimensional gluing datums, which implies even genus. The odd genus case follows from the usual trick of attaching a lollipop. For executing the trick we need a converse to Lemma~\ref{lemma-loop-12}.
	
	\begin{lm}(loop and bridge) \label{lemma-loop-bridge} 
		Let $M$ be change-minimal and full-rank. Let $A$ in $V(H(M))$ be trivalent, incident to a bridge $h_b$ and a loop $h_l$. Then $h_b$ and $h_l$ are above a path of $T$ of length 2 that leads to a leaf. Moreover, they are the only edges of $H$ above this path. 
		%
		%
	\end{lm}
	
	\begin{proof}
		Regard $A$ as a vertex of $G$. Let $v=\dtmor(A)$. Suppose that $r_\dtmor(A) = 0$. By Lemma~\ref{lemma-dangling-no-glue} we can apply Proposition~\ref{prop-local} (local properties). By the case~(r0-nd3) the three non-dangling edges incident to $A$ are above three distinct edges of $T$. Since two of the edges of $G$ incident to $A$ are in $h_l$, we have that $h_l$ passes above at least two leaves of~$T$, contradicting Remark~\ref{rem-leaves-min-change} (change-minimal leaves). The possibility $r_\dtmor(A) = 2$ is ruled out since $\nddeg A = 3$. Thus, $r_\dtmor(A) = 1$. The case (r1-nd3) implies that $v$ is divalent and there is an edge $t$ in $E(T)$ incident to $v$ such that
		there are non-dangling edges $e_1$ and $e_2$ in $\Neigh A$ above $t$. Both $e_1,e_2$ are in $h_l$, for otherwise there is a contradiction again. Hence, $h_l$ passes twice above $t$, so by pass-once $t$ leads to a leaf. The path is made by $t$ and the other edge incident to~$v$, and now we can apply Lemma~\ref{lemma-loop-12} to conclude uniqueness.
	\end{proof}
	
	Let $M = (T, d, \sim)$ be a genus-$g$ gluing datum. We add some boundary points to $C_M \subset C_{H(M)}$, in a process akin to the one done in Section~\ref{section-moduli-space-of-metric-graphs} that added some boundary points to $C_H$, giving rise to $\overline C_H$. For a given $\ell_T$ in $\RRgo^{E(T)}$ contract all the edges of $T$ for which $\ell_T(e) = 0$ to get $M_0$ (this is independent of contraction order); 
	the pair $(M, \ell_T)$ refers to the tropical morphism~$(M_{0}, \ell|_{T_0})$. Denote by $\overline C_M$ the image under $A_M$ of the points $\ell_T$ in $\RRgo^{E(T)}$ such that the source of $(M, \ell_T)$ has genus $g$; namely points where the genus does not drop. 
	
	\begin{re}
		The cone $\overline C_M \subset \overline C_{H(M)}$ is closed in the topology that $\RR^{E(H(M))}$ induces on~$\overline C_{H(M)}$.
	\end{re}
	
	Let $H$ be a genus-$g$ combinatorial type. We denote by $\FD H$ the subset of gluing datums $M$ in $\FD g$ such that $H(M)$ is isomorphic to $H$. To prove Theorem~\ref{theorem-gonality} we show that it is possible to cover $\overline C_H$ with the cones $\overline C_M$ of the $M$ in $\FD H$, by identifying $\overline C_H$ with points in $\RR_{\ge 0}^{3g-3}$. Recall that an edge labelling $(\lambda_T, \lambda_H)$ on $M$ identifies $\overline C_M$ with a cone in $\RR_{\ge0}^{3g-3}$. Moreover, recall from Subsection~\ref{section-inherited-properties} that if $M_0$ has trivalent $H(M_0)$ and is the limit at $t_1$ of some $M$ in $\FD H$, then a labelling compatible at $t_1$ has the property of embedding in $\RR_{\ge0}^{3g-3}$ the cones of two $M, \hat M$ in $\cstar {M_0}$ in such a way that the intersection is the embedding of the cone of $M_0$. Thus, embed $\FD H$ into $\RR^{3g-3}$ by choosing labellings for the elements of $\FD H$ such that for every $M_0$ that has trivalent $H(M_0)$ and is a limit at $t_1$ of some $M$ in $\FD H$, we have labellings compatible at $t_1$ for $\cstar{M_0}$.
	
	We now present the proof of Theorem~\ref{theorem-gonality}. To realize a point $y_1$ in~$\overline C_H$ we use Theorem~\ref{theorem-cd} to choose an initial point $y^*$ in~$C_H$ realizable by a tropical morphism, and draw a straight line in $\RR_{\ge 0}^{3g-3}$ between $y_1$ and $y^*$ to use as a deformation path. We then walk along this line, applying Proposition~\ref{proposition-walking-through-I} as needed.  To ensure that the hypotheses of Proposition~\ref{proposition-walking-through-I} are met, we choose $y^*$ to be in general position using the following standard argument.
	
	\begin{lm} \label{lemma-wiggle-point}
		Let $S = \bigcup_{L \in \mathcal F} L \subset \mathbb R^n$ be a countable union of affine linear spaces $L$ of codimension at least~2, and $y, y_1$ be in $\mathbb R^n$. For every $\varepsilon > 0$ there is a point $y^*$ in the radius-$\varepsilon$ ball centred at  $y$ such that the half-open line segment $[y^*, y_1)$ does not intersect~$S$. 
	\end{lm}
	
	\begin{proof}
		Let $\bar S = \bigcup_{L \in \mathcal F} \operatorname{aff-span} (y_1,L) \subset \mathbb R^n$. The codimension of  $\operatorname{aff-span} (y_1,L)$ is at least~1; as $\mathcal F$ is countable, this implies that the ball $B(y, \epsilon)$ is not contained in $\bar S$. So choose $y^*$ in $B(y, \varepsilon) \setminus \bar S$.
	\end{proof}
	
	\begin{proof}[Proof of Theorem~\ref{theorem-gonality}.] Assume that $g = 2g'$. To realize a genus-$g$ metric graph $\mG$, delete the dangling trees of $\mG$ to obtain the metric graph $\mH$, and let $H$ be a trivalent combinatorial type such that for some $y_1 \in \overline C_H$ we have that $\mH = (H, y_1)$. By Theorem~\ref{theorem-cd} we have that $\FD {H}$ is non-empty. Moreover it is a finite set. So embed $\FD {H}$ into $\overline C_H$ via labellings compatible at $t_1$.
		
		For a gluing datum $M$ in $\FD H$, denote by $\mathbb L_k M$ the set of affine spaces in $\mathbb R^{3g-3}$ associated to limits of $M$ where at least $k$ edges have been contracted. This means, images under $A_M$ of sets in $\mathbb R^ {|E(T)|}$ defined by fixing $k$ coordinates to be zero; the members of $\mathbb L_k M$ have codimension at least~$k$.
		Let \[Z_k = \bigcup_{M \in \FD H} \mathbb L_k M.\] 
		
		Since $\FD H$ is finite, the set $Z_k$ is closed in the topology that $\mathbb R^{3g-3}$ induces on $\overline C_H$. Choose $M$ in $\FD H$, and using Lemma~\ref{lemma-wiggle-point} choose $y^*$ in $C_M \subset \RR_{\ge 0}^{3g-3}$ such that the half open line segment $\sigma = [y^*, y_1)$ does not intersect $Z_2$. Note that $y^*$ is not in $Z_1$ since $C_M \cap Z_1 = \varnothing$ and $y^* \in C_M$. Thus, $\sigma$ intersects with $Z_1$ finitely many times, and these intersections are not in $Z_2$. Now simply walk from $y^*$ to $y_1$, always staying inside of the embedding of $\FD H$ into $\RR_{\ge 0}^{3g-3}$ by applying Proposition~\ref{proposition-walking-through-I}, if necessary, when arriving at one of the finitely many points $\sigma \cap Z_1$.
		
		Now assume that $g$ is odd. Choose a point $x$ in $\mG$ and attach to it a loop via a bridge. The lengths of the bridge and loop are not important, hence the construction depends on one parameter (the placement of $x$). The resulting graph $\mG_\text b = (G_\text b, \ell_\text b)$ has even genus $g+1$. As $g$ is odd, $\lceil g/2 \rceil + 1$ equals $\lceil (g+1)/2 \rceil + 1$. So there is a tropical morphism $\tmor\!: \mG_\text b \to (T, \ell_T)$ of degree  $\lceil g/2 \rceil + 1$.  By Lemma~\ref{lemma-loop-bridge}, the bridge and loop added to $\mG$ are above two edges of $T$, and no other point of $\mG_\text b$ is above them. Make the desired morphism by deleting these two edges and everything above. 
	\end{proof}
	
	\section{Constructions} \label{sec-constructions}
	We carry out the constructions necessary to prove deformation. 
	This section is motivated by the following paramount observation.
	The gluing datum $M_2$ of Example~\ref{ex-three-cycles-loop} is the limit at $t_1$ of $M_1$, $\Aq M a _3$, and $\Aq M b _3$ with edge labellings $\lambda_1$, $\Aq \lambda a _3$ and $\Aq \lambda b _3$.  There is an isomorphism from $\Aq M a _3$ to $\Aq M b _3$, but $\Aq \lambda a _3$ does not induce $\Aq \lambda b _3$; moreover $\Aq M a _3$ and $\Aq M b _3$ produce in total two non-isomorphic tropical morphisms realizing the same source $\mH$. 
	Let $A_1$, $\Aq A a_2$, $\Aq A b_3$ be the edge-length matrices of these gluing datums. They satisfy: \[\det A_1 + \det \Aq A a_3 + \det \Aq A b _3 = 0.\] We view this equation as a balancing condition, something ubiquitous in tropical geometry.  
	We generalize this relation with Equation~\eqref{eq-star} in Subsection~\ref{subsection-balancing-condition}. 
	
	We construct several candidates to be in $\cstar {M_0}$,
	for $M_0$ the limit at $t_1$ of some $M$ in $\mathbb {FD}_g$. We use Equation~\eqref{eq-star} to prove that at least in one of these candidates the determinant of the edge-length matrix has the sign opposite to that of $\det A_M$; as needed in Lemma~\ref{lemma-construct-next-cone}.
	
	\subsection{Candidates to being full-dimensional} \label{sec-codim1-limit}
	Let $M$ be in $\mathbb{FD}_g$ and $M_0$ be the limit at~$t_1$. Recall that giving a construction for $\cstar {M_0}$ is hard because having a full-rank edge-length matrix is a global condition. Instead, we pass to a larger class of gluing datums that contains~$\cstar {M_0}$. In return we get conditions that can be checked locally, namely they only depend on the gluing relations above $t_1$ and its ends $u$, $v$.

	\begin{de} \label{def-poss-full-dif} Let $g = 2g'$ and $M$ be a genus-$g$ gluing datum. We say that $M$ is \emph{possibly full-dimensional} if $M$ is change-minimal, satisfies dangling-no-glue, no-return, and $A_M$ is a $(3g-3)\times(3g-3)$-matrix. 
	\end{de}
	
	We denote by $\pcstar {M_0}$ the set of isomorphism classes of possibly full-dimensional gluing datums that contract to a limit isomorphic to $M_0$. By Lemma~\ref{lemma-limit-full-dimensional} we have that $\pcstar {M_0}$ contains $\cstar {M_0}$. We index the elements of $\pcstar{M_0}$ by denoting them $\Aq Mq$, with $q=1,2,$ and so on. We write $\Aq T q$ for the base tree of $\Aq M q$, $\Aq G q$ for the graph that arises from $\Aq M q$, $\Aq \dtmor q$ for $\dtmor_{\Aq M q}$, $\Aq r q$ for $r_{\Aq \dtmor q}$, $\Aq m q$ for $m_{\Aq \dtmor q}$, and $\Aq A q = (\Aq a q _{ij})$ for $A_{\Aq M q}$. We also write $T_0$ for the base tree of $M_0$, $G_0$ for the graph that arises from $M_0$, $\dtmor_0$ for $\dtmor_{M_0}$, $r_0$ for $r_{\dtmor_0}$, $m_0$ for $m_{\dtmor_0}$, and $A_0 = (a_{ij})$ for the matrix we get by inserting the zero column at the beginning of $A_{M_0}$.
	
	Recall that by Equation~\eqref{eq-C} on Page~\pageref{eq-C} we have that $w_0$ is not change-minimal; more precisely, $\ch w_0 + \vale w_0 = 4$. Our strategy to construct the elements of $\pcstar {M_0}$ is to regrow $w_0$ back to an edge in such a way that the resulting gluing datum is change-minimal and satisfies dangling-no-glue and no-return. This is possible because, as advertised, these conditions depend solely on the gluing relations above $t_1$ and its ends, $u$ and $v$. 


	\subsection{A balancing condition} \label{subsection-balancing-condition} On the other direction, note that since $\Aq A q$ is a $(3g-3)\times(3g-3)$-matrix we have that $\Aq M q$ in $\pcstar {M_0}$ is full-dimensional if only if $\det \Aq A q \ne 0$. So we need a relation for the determinants.  This is where the balancing condition suggested by Example~\ref{ex-three-cycles-loop} comes in:
	\begin{lm}[balancing condition] \label{lemma-balancing-condition}
		%
		If $M$ is in $\cstar{M_0}$ and $H(M_0)$ is isomorphic to $H(M)$,
		then there exists a finite sequence of triples $(\Aq M q, \Aq \lambda q, K_q)$, indexed by $q$, where the $\Aq M q$ are in $\pcstar{M_0}$ with edge labellings $\Aq \lambda q$ compatible at $t_1$ and the $K_q$ are positive integers, such that every isomorphism class in $\pcstar{M_0}$ has a representative in the $\Aq M q$, and the following equality holds:
		\begin{align} \label{eq-star} \tag{$\star$}
		\sum_{q = 1}^r K_q \det(\Aq A q) = 0.
		\end{align}
	\end{lm}
	
	It is straightforward to see that the balancing condition of Lemma~\ref{lemma-balancing-condition} implies Lemma~\ref{lemma-construct-next-cone}.
	
	\begin{proof}[Proof of Lemma~\ref{lemma-construct-next-cone}.]
		Since $M$ is in $\cstar{M_0}$, we have that $M$ is in $\pcstar{M_0}$. Then by the balancing condition we have some $\hM$ in $\pcstar{M_0}$ with $\sgn \det A_{\hM} \ne \sgn \det A_M$ and $\det A_{\hM} \ne 0$. The latter equation implies that $\hM$ is in $\cstar{M_0}$, as desired.
	\end{proof}
	
	We prove the balancing condition and construct $\pcstar{M_0}$, simultaneously, with a case-by-case analysis. 
	For verifying Equation~\eqref{eq-star} we rely on formulas that follow from Equations~\eqref{eq-ast} and \eqref{eq-astast} on Page~\pageref{eq-ast}.
	To shorten, we write $\Aq c q$ for $\det \Aq Aq$. 
	Consider the adjugate matrix $\adj{\Aq Aq}$, with columns indexed by edges of $H(\Aq Mq)$ and rows indexed by edges of~$\Aq Tq$.
	Let $c_1, \dots, c_{3g-3}$ be the first row of $\adj{\Aq Aq}$. Recall that the $c_i$ depend only on~$M_0$. For an edge $h$ in $H(\Aq Mq)$ or $H(M_0)$ we write $c_h$ for $c_{\Aq \lambda q_H (h)}$ or $c_{\lambda_{0,H}(h)}$, respectively.  Recall that given a non-dangling $e$ in $\Aq Gq$ or $G_0$ we denote by $h(e)$ the unique edge of $H(\Aq Mq)$ or $H(M_0)$, respectively, that contains $e$. If $e$ is dangling, we let $c_{h(e)}$ be zero. Fix a $j$ in $[3g-3]$. We define:
	\begin{align*} 
	\sigma_0(J,\, j) &=  \sum_{e \in J} \dfrac{c_{h(e)}}{|e|},  
	\text{  for  } J \subseteq \inv{\dtmor_{M_0}}(t_j),\\
	\Aq \sigma q(J,\, j) &=  \sum_{e \in J} \dfrac{c_{h(e)}}{|e|}, 
	\text{  for  } J \subseteq \inv{\dtmor_{\Aq Mq}}(t_j).
	\end{align*}	
	
	For convenience we write $\sigma_0(j)$ for $\sigma_0(\inv{\dtmor_{M_0}}(t_j),\, j)$ and $\Aq \sigma q(j)$ for $\Aq \sigma q(\inv{\dtmor_{\Aq M q}}(t_j),\, j)$. 
	
	\begin{lm} \label{lemma-formula-for-cq}
		The following equalities hold: 
		\begin{align*}
		\Aq \sigma q(j) &= \Aq c q \text{ if } j = 1, \text{ otherwise } \Aq \sigma q(j) = 0, \\
		\sigma_0(J,\, j) &= \Aq \sigma q(J,\, j) \text{ for } j\ne 1 \text{ and any } J.
		\end{align*}
	\end{lm}
	\begin{proof}
		The first equality follows from Equations~\eqref{eq-ast} and \eqref{eq-astast}, and the fact that: 
		\[	\Aq \sigma q(J,\, j) =  \sum_{e \in  \inv{\dtmor_{\Aq Mq}}(t_j) } \dfrac{ c_{h(e)}}{|e|}
		= \sum_{i = 1}^{3g-3} c_i \Aq aq_{ij}.\]
		Note that if $j \ne 1$ the edges in $\inv{\dtmor_{M_0}}(t_j)$ correspond to those in $\inv{\dtmor_{\Aq Mq}}(t_j)$, which together with the facts that the $c_i$ depend only on $M_0$ and $\Aq {a_{ij}} q = a_{ij}$ for $j \ne 1$, gives the second equality.
	\end{proof}
	
	Additionally, note that if $J_1, J_2$ are disjoint, then $\sigma_0(J_1,j) + \sigma_0(J_2,j) = \sigma_0(J_1 \cup J_2,j)$.
	
	\subsection{Cases for constructing \texorpdfstring{$\pcstar{M_0}$}{PStar(M0)}} \label{subsection-cases-for-constructing}
	The plan of action to construct $\pstar{M_0}$ has three steps:
	\begin{enumerate}[(1)]
		\item Finding the possible base trees that contract to $T_0$.
		\item Restricting the possibilities for the gluing relations $\Aq {\sim_u} q$, $\Aq {\sim_v} q$, $\Aq {\sim_{t_1}} q$.
		\item Giving a construction of a gluing datum together with edge labellings compatible at $t_1$ for each of the possibilities found in Step~(2). 
	\end{enumerate}
	
	We deal with Step~(1) on Subsection~\ref{sub-graphs-to-T0}.  Step~(2) is the most delicate one, as it proves that indeed we have representatives for all the elements of $\pcstar{M_0}$. We are able to say some useful generalities in Subsections~\ref{subsection-the-graph-GqA0} and \ref{subsection-combinatorial-type}, but the definitive arguments come from assuming certain features of the gluing relations of $M_0$ above $w_0$ and edges in~$\Neigh {w_0}$. Hence, we split the proof in cases. For Step~(3) we refer the reader to the illustrations that accompany each case.
	
	
	
	
	We have, by Equation~\eqref{eq-C} on Page~\pageref{eq-C}, that $\ch w_0 + \vale w_0 = 4$. Thus, $\vale w_0$ is 1, 2, 3 or 4. In our setting the first option is not possible, because if $\vale w_0$ were 1, then we would have $\vale u =1 $ and $\vale v = 2 $ for all gluing datums in $\pcstar{M_0}$, including the full-dimensional~$M$. So Lemma~\ref{lemma-loop-12} applied to $M$ would imply that there is a loop above $t_1$, so the genus of $H(M_0)$ would drop upon contraction of $t_1$, a contradiction. 
	
	Let $A_0$ be a vertex above $w_0$. If $H(M_0)$ is isomorphic to $H(M)$, then $H(M_0)$ is trivalent. Hence, $\nddeg A_0$ is 0, 2, or 3. Also, since $\ch w_0 \le 2$ we have that $r_0(A_0)$ is 0, 1 or~2.
	
	Our philosophy of division of cases is similar to the one for proving the local properties:
	the different cases for calculating $\pstar{M_0}$ are mainly determined by the values of $\vale w_0$, $r_{0}(A_0)$, and $\nddeg A_0$. There are other minor case specific factors that come into play. This gives a total of 17 cases. See Figure~\ref{fig-diagram-proof} for a map that eases navigating through the 15 main cases. The diagram does not show 2 \emph{auxiliary cases} contained in Case~\{aux-r0\}, which we treat in Subsection~\ref{sub-r0}.

	\begin{figure}
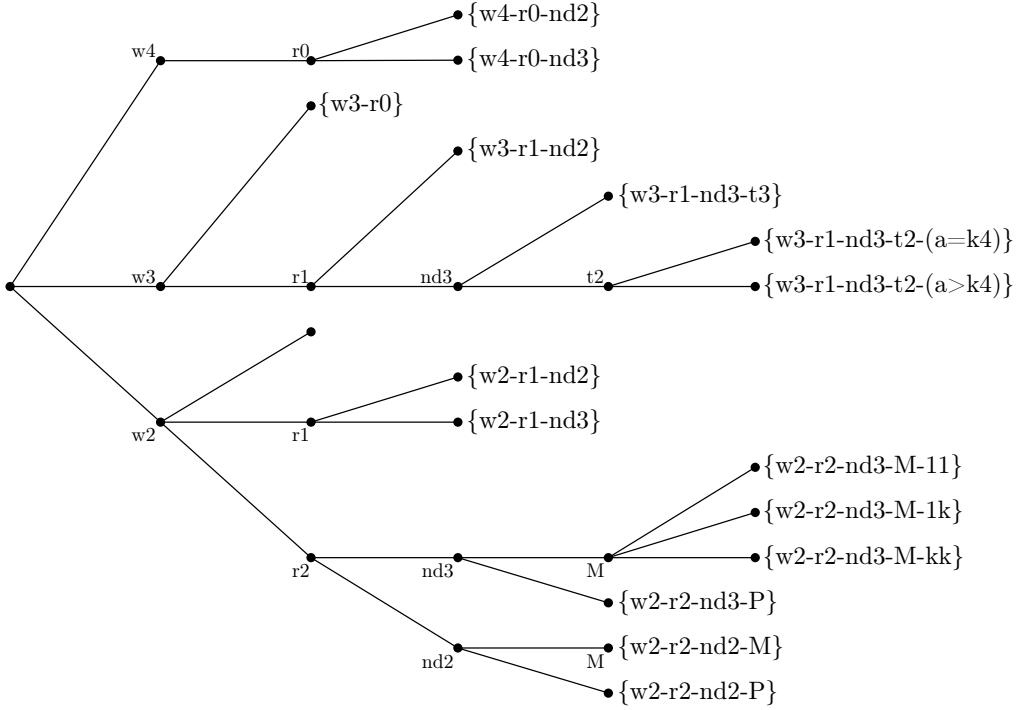

		\hspace*{-1.4in}
		\begin{overpic}[scale=0.9]{\figsdir/279.pdf}
			
			\put (16.7,91) {\scalebox{0.75}{w4}}
			\put (16.7,61) {\scalebox{0.75}{w3}}
			\put (16.7,40) {\scalebox{0.75}{w2}}
			
			\put (38,91) {\scalebox{0.75}{r0}}
			\put (38,61) {\scalebox{0.75}{r1}}
			\put (38,40) {\scalebox{0.75}{r1}}
			\put (38,22) {\scalebox{0.75}{r2}}
			
			
			\put (55,61) {\scalebox{0.75}{nd3}}
			
			\put (55.2,22) {\scalebox{0.75}{nd3}}
			\put (55.2,10) {\scalebox{0.75}{nd2}}

			\put (77,61) {\scalebox{0.75}{t2}}
			\put (77,22) {\scalebox{0.75}{M}}
			\put (77,10	) {\scalebox{0.75}{M}}
			
			\put (60,95.8) {\scalebox{0.95}{ \{w4-r0-nd2\}}}			
			\put (60,89.8) {\scalebox{0.95}{ \{w4-r0-nd3\}}}		
			\put (40.3,83.8) {\scalebox{0.95}{ \{w3-r0\}}}
			\put (60,77.8) {\scalebox{0.95}{ \{w3-r1-nd2\}}}
			\put (80,71.8) {\scalebox{0.95}{ \{w3-r1-nd3-t3\}}}
			\put (99.3,65.8) {\scalebox{0.95}{ \{w3-r1-nd3-t2-(a=k4)\}}}
			\put (99.3,59.8) {\scalebox{0.95}{ \{w3-r1-nd3-t2-(a>k4)\}}}			
			\put (60,47.8) {\scalebox{0.95}{ \{w2-r1-nd2\}}}
			\put (60,41.8) {\scalebox{0.95}{ \{w2-r1-nd3\}}}
			\put (99.3,35.8) {\scalebox{0.95}{ \{w2-r2-nd3-M-11\}}}
			\put (99.3,29.8) {\scalebox{0.95}{ \{w2-r2-nd3-M-1k\}}}
			\put (99.3,23.8) {\scalebox{0.95}{ \{w2-r2-nd3-M-kk\}}}
			\put (80,17.8) {\scalebox{0.95}{ \{w2-r2-nd3-P\}}}
			\put (80,11.8) {\scalebox{0.95}{ \{w2-r2-nd2-M\}}}
			\put (80,5.8) {\scalebox{0.95}{ \{w2-r2-nd2-P\}}}
		\end{overpic}  
		\caption{\label{fig-diagram-proof} Logical flow of cases to regrow  $w_0$.}
	\end{figure}

	\subsection{Trees contracting to \texorpdfstring{$T_0$}{T0}} \label{sub-graphs-to-T0} We describe the possibilities for $\Aq T q$. Assume that the edges of $T_0$ incident to $w_0$ are labelled with $2, \dots, \deg w_0 + 1$. To construct a tree that contracts to $T_0$ choose disjoint sets $S$, $S'$ whose union is $\{2, \dots, \deg w_0 + 1\}$. At most one of $S$, $S'$ may be empty. Replace $w_0$ with an edge $t_1$ with ends~$u, v$; the edges indexed by $S$ are incident to $u$, and those indexed by $S'$ are incident to~$v$. We denote the resulting tree by~$T_S$. For ease of notation we omit braces; for example we write $T_{2,3}$ for $T_{\{2,3\}}$, and $T_\varnothing$ for the tree corresponding to empty~$S$. Without loss of generality we assume that $|S| \le |S'|$, and in case of equality that $2 \in S$. This gives a one-to-one correspondence between unordered partitions of $\{2, \dots, \deg w_0 + 1\}$ into two parts and trees contracting to~$T_0$. Since $\Aq M q$ is change-minimal, $\vale u \le 3$ and $\vale v \le 3$, that is $|S|, |S'| \le 2$. 
	
	\subsection{The graph \texorpdfstring{$\ndGqAo$}{ndGqA0}} \label{subsection-the-graph-GqA0} 
	Let $\ndGqAo$ be the subgraph of non-dangling elements of $\Aq G q$ that contract to~$A_0$. Since $g(H(M)) = g(H(M_0))$, the graph $\ndGqAo$ is a forest; moreover, by Lemma~\ref{lemma-class-union} it is a tree. By dangling-no-glue, determining the gluing relations $\Aq {\sim_u} q$, $\Aq {\sim_v} q$, $\Aq {\sim_{t_1}} q$ is equivalent to determining $\Neigh A$ for each $A$ in $V(\ndGqAo)$ and the index $\Aq m q(x)$ of each element $x$ in $\ndGqAo$.
	
	
	\begin{lm} \label{lemma-ndval-of-GqA0}
		Let $M$ be a gluing datum, $M_0$ the limit at~$t_1$, and $A_0$ above $w_0$. If $g(H(M)) = g(H(M_0))$, then:
		\[\nddeg{A_0} = \sum_{A \in V(\ndGqAo)} (\nddeg{A} - 2) + 2. \]
	\end{lm}
	
	\begin{proof} 
		Since $\ndGqAo$ is a tree, $|E(\ndGqAo)| = |V(\ndGqAo)|-1$. So we have that:
		\begin{align*}
		\nddeg{A_0} &=  \sum_{A \in V(\ndGqAo)} \nddeg{A}  - 2|E(\ndGqAo)|\\ 
		&= \sum_{A \in V(\ndGqAo)} \nddeg{A} - 2(|V(\ndGqAo)|-1)\\
		&= \sum_{A \in V(\ndGqAo)} (\nddeg{A} - 2) + 2. \qedhere	
		\end{align*}
	\end{proof}
	
	Let $A$ be in $V(\ndGqAo)$. By Lemma~\ref{lemma-ndval-of-GqA0} if $\nddeg A_0 = 2$, then $\nddeg A = 2$. If $\nddeg A_0 = 3$, then either $\nddeg A = 2$ or $\nddeg A = 3$; moreover, the latter case occurs exactly once. So when $\nddeg A$ is 3 we denote by $\Aq A q$ the unique vertex of $\Aq G q$ with non-dangling valency 3 that contracts to $A_0$.
	We make two further observations, one on the edge set and one on the vertex set of $\ndGqAo$:
	
	
	
	\begin{lm}  \label{lemma-edges-in-GqA0}
		Let $r$ be the number of vertices of $\Aq G q$ that contract to $A_0$ and belong either to Case~(r1-nd3) or Case~(r2-nd2) of Proposition~\ref{prop-local} (local properties). Then $|E(\ndGqAo)| \le r+1 \le  r_0(A_0) + 1$.
	\end{lm}
	
	\begin{proof}
		Let $A$ be in $V(\ndGqAo)$. By the local properties and the fact that all the edges of $\ndGqAo$ are above $t_1$, we have that $\vale_{\ndGqAo} A$ is either 1 or 2. Thus, $\ndGqAo$ is in fact a path, so the inner vertices have valency~2. Note that valency~2 implies that $A$ belongs either to Case~(r1-nd3) or Case~(r2-nd2), so the result follows.
	\end{proof}
	
	\begin{lm} \label{lemma-vertices-in-GqA0}
		If $r_0(A_0) \le 1$, then the vertices of $\ndGqAo$ are the ends above $u$, $v$ of the edges $\Aq e q$, where $\Aq e q$ corresponds to $e$ in $\Neighnd {A_0}$.
	\end{lm}
	
	\begin{proof}
		Let $A$ be in $V(\ndGqAo)$. Since $r_0(A_0) \le 1$, we have that $\Aq r q(A) \le 1$. Thus, by the local properties $A$ is not above a leaf. So $A$ satisfies the no-return condition, which means that not all of the edges in $\Neighnd A$ are above $t_1$, so at least one of them corresponds to one of the edges in $\Neighnd {A_0}$.
	\end{proof}

	\subsection{Combinatorial type} \label{subsection-combinatorial-type}  
	We can obtain $H(M_0)$ from $H(M)$ by contracting a set of edges. If the number of edges contracted is zero, then $H(M_0)$ and $H(M)$ are isomorphic. This is equivalent to having a trivalent $H(M_0)$.  This is the setting of Lemma~\ref{lemma-balancing-condition} (balancing condition). Thus, we assume for the rest of this section that $H(M_0)$ and $H(M)$ are isomorphic. Since the edge labelling on $\pcstar{M_0}$ is compatible at $t_1$, the identification is canonical.
	
	The case where $H(M_0)$ and $H(M)$ are not isomorphic is of interest as well, and we deal with it in Part II of this series of articles. It is by studying the $\pcstar{M_0}$ of $M_0$ with non-trivalent $H(M_0)$ that we show how it is possibly to walk between cones $C_H$ with different combinatorial types $H$. We use this to prove that the space $\TM d g$ we alluded to in the sneak peak part of Subsection~\ref{subsection-gonality} is connected. 
	
	\subsection{Case-work} \label{subsection-case-work} Let $A_0$ be above $w_0$. Let $e$ in $\Aq G q$ contract to $A_0$. Let $A_u$, $A_v$ be the ends of $e$, above $u$, $v$ respectively. If $e$ is dangling, then it does not contribute to $\Aq \sigma q (1)$. So, by Lemma~\ref{lemma-dangling-limit} (dangling in the limit) we can assume that $A_0$ is non-dangling. We denote by $J_r$ the set of those $e$ contracting to $A_0$ such that $\Aq rq(A_u) + \Aq rq(A_v) = r$. By the discussion in Subsection~\ref{subsection-cases-for-constructing} we have that $r_0(A_0) \le 2$, hence:
	\[\det \Aq A q = \Aq \sigma q (1) = \Aq \sigma q (J_0, 1) + \Aq \sigma q (J_1, 1) + \Aq \sigma q (J_2, 1). \]
	In each of the following cases we calculate $\Aq \sigma q (J_0, 1)$, $\Aq \sigma q (J_1, 1)$, and $\Aq \sigma q (J_2, 1)$ separately. To avoid having several levels of subindices, we write $c(e)$ for $c_{h(e)}$.
	

	\subsubsection{Case \{aux-r0\}} \label{sub-r0}
	We begin by showing that $\Aq T q = T_{\Aq S q}$ determines $\ndGqAo$ when $r_0(A_0)$ is~0. This is an auxiliary case for calculating $\Aq \sigma q (J_0, 1)$. Since $\vale w_0 \ge 2$, by Lemma~\ref{lem-noreturn-rphi1} (r1 implies no-return) we have that $A_0$ satisfies the no-return condition. Thus, if $\nddeg A_0 = 2$, then $\dtmor_0$ is injective on $\Neighnd {A_0}$. If $\nddeg A_0 = 3$, suppose that $\dtmor_0$ is not injective on $\Neighnd {A_0}$, so $\sum_{e \in \Neighnd{A_0} } |e| \le 2|A_0|$. Hence, $r_0(A_0) = \nddeg A_0 - 2 + 2|A_0| - \sum_{e \in \Neighnd{A_0} } |e| \ge 1$, a contradiction. 
	
	\begin{itemize}[leftmargin=*]
		\item Case \{aux-r0-nd2\}:
		%
		Assume that $\nddeg A_0$ is 2. Let $e_\alpha, e_\beta$ be the edges in $\Neighnd {A_0}$, above $t_\alpha$ and $t_\beta$ respectively. By Lemma~\ref{lemma-vertices-in-GqA0} the vertices of $\ndGqAo$ are the ends of $\Aq {e_\alpha} q$ and $\Aq {e_\beta} q$ above $u$ or $v$. If $\{\alpha, \beta\} \subset \Aq S q$, the ends of $\Aq {e_\alpha} q$, $\Aq {e_\beta} q$ are above~$u$; since $\GqAo$ is connected, they equal one vertex $A$ in~$\Aq G q$, which equals $A_0$ as subsets of $[d]$ by Lemma~\ref{lemma-class-union}. Similarly if $\{\alpha, \beta\} \subset \Aq {S'} q$. Otherwise, one end $A_u$ is above $u$, the other end $A_v$ above $v$, so they are distinct. By connectivity of $\ndGqAo$ there is one edge $e'$ joining $A_u$, $A_v$. By Lemma~\ref{lemma-ndval-of-GqA0} and since $r_0(A_0) = 0$, the vertices $A_u$ and $A_v$ belong to  Case~(r0-nd2) of the local properties. So, as subsets of $[d]$, the classes $\Aq {e_\alpha} q$, $A_u$, $e'$, $A_v$, $\Aq {e_\beta} q$ are equal; and $h(\Aq {e_\alpha} q) = h(e') = h(\Aq {e_\beta} q)$.
		
		\item Case \{aux-r0-nd3\}: Assume that $\nddeg A_0$ is 3. Let $e_\alpha, e_\beta, e_\gamma$ be the edges in $\Neighnd {A_0}$, above $t_\alpha$, $t_\beta$, $t_\gamma$, respectively. Since $|\dtmor_0(\Neighnd {A_0})| = 3$ and $\max(|\Aq S q|,|\Aq {S'} q|) \le 2$, both intersections $\Aq S q \cap \dtmor_0(\Neighnd {A_0})$ and $\Aq {S'} q \cap \dtmor_0(\Neighnd {A_0})$ are non-empty. One of these intersections is a singleton. Assume without loss of generality that the singleton is $\{\alpha \}$. By Lemma~\ref{lemma-edges-in-GqA0} there is at most one edge in $\ndGqAo$, therefore at most two vertices. By Lemma~\ref{lemma-vertices-in-GqA0} the vertices of $\ndGqAo$ are the end $A_2$ of $\Aq {e_\alpha} q$ above $u$, and the vertex $A_3$ that is the end of both $\Aq {e_\beta} q$ and $\Aq {e_\gamma} q$ above~$v$. So $A_2$ and $A_3$ are distinct, joined by a non-dangling edge~$e'$, and $\nddeg A_2 = 2$, $\nddeg A_3 = 3$. By the Case~(r0-nd2) of the local properties, as subsets of $[d]$, we get that $\Aq {e_\alpha} q$, $A_2$, and $e'$ are equal, and $h(\Aq {e_\alpha} q) = h(e')$. Note that they are also a subset of~$A_3$. Finally Lemma~\ref{lemma-class-union} implies that $A_0 = A_3 \cup A_2 = A_3$.
	\end{itemize}
	
	\subsubsection{Case \{w4\}} \label{sub-deg-four} 
	
	Assume that $\vale w_0$ is 4. Then $\ch w_0 = 0$ by Equation~\eqref{eq-C} on Page~\pageref{eq-C}, so $r_0(A_0) = 0$. 
	Thus, the two Cases~\{aux-r0-nd2\} and~\{aux-r0-nd3\} settle entirely this case. 
	Namely, the elements of $\pstar{M_0}$ are $\Aq M 1$, $\Aq M 2$ and $\Aq M 3$, determined by $T_{2,3}$, $T_{2,4}$, and $T_{2,5}$, respectively. 
	
	
	
	
	\begin{itemize}[leftmargin=*]
		\item Case \{w4-r0-nd2\}: Let $\Neighnd{A_0} = \{e_\alpha, e_\beta \}$ with $\dtmor_0(e_\alpha) = t_\alpha$, $\dtmor_0(e_\beta) = t_\beta$. Applying Case~\{aux-r0-nd2\} we get the local parts shown in Figure~\ref{figure-26}, and a formula.
		
		\noindent
		\begin{minipage}{\textwidth}
			\begin{minipage}[t]{.2\textwidth}

				\centering
				\begin{overpic}[scale=0.9]{\figsdir/250.pdf} 
					\put (45,26){\scalebox{0.8}{$w_0$}} 
					\put (16,0) {\scalebox{0.8}{$t_\alpha$}}
					\put (3,72) {\scalebox{0.8}{$t_\beta$}}
					\put (90,72){\scalebox{0.8}{$t_\gamma$}} 
					\put (75,0) {\scalebox{0.8}{$t_\delta$}}		 
				\end{overpic}
				
				\vspace{0.5em}
				
				$M_0$
			\end{minipage}\hspace{1em}
			\noindent \begin{minipage}[t]{.22\textwidth} 
				\centering
				\begin{overpic}{\figsdir/252.pdf} 
					\put (47,34) {\scalebox{0.8}{$t_1$}}
					\put (31,34) {\scalebox{0.8}{$u$}}
					\put (64,34) {\scalebox{0.8}{$v$}}
					\put (9,9) {\scalebox{0.8}{$t_\alpha$}}
					\put (0,62) {\scalebox{0.8}{$t_\beta$}}
					\put (96,62) {\scalebox{0.8}{$t_\gamma$}}
					\put (83,9) {\scalebox{0.8}{$t_\delta$}}		
				\end{overpic} 
				
				\vspace{0.5em}
				
				$T_{\alpha,\beta}$
			\end{minipage}\hspace{1em}
			\begin{minipage}[t]{.22\textwidth}
				\centering 
				\begin{overpic}{\figsdir/251.pdf} 
					\put (47,34) {\scalebox{0.8}{$t_1$}}
					\put (31,34) {\scalebox{0.8}{$u$}}
					\put (64,34) {\scalebox{0.8}{$v$}}
					\put (9,9) {\scalebox{0.8}{$t_\alpha$}}
					\put (0,62) {\scalebox{0.8}{$t_\gamma$}}
					\put (96,62) {\scalebox{0.8}{$t_\beta$}}
					\put (83,9) {\scalebox{0.8}{$t_\delta$}}
				\end{overpic} 
				
				\vspace{0.5em}
				
				$T_{\alpha,\gamma}$
			\end{minipage}\hspace{1em}
			\begin{minipage}[t]{.22\textwidth} 
				\centering
				\begin{overpic}{\figsdir/251.pdf} 
					\put (47,34) {\scalebox{0.8}{$t_1$}}
					\put (31,34) {\scalebox{0.8}{$u$}}
					\put (64,34) {\scalebox{0.8}{$v$}}
					\put (9,9) {\scalebox{0.8}{$t_\alpha$}}
					\put (0,62) {\scalebox{0.8}{$t_\delta$}}
					\put (96,62) {\scalebox{0.8}{$t_\beta$}}
					\put (83,9) {\scalebox{0.8}{$t_\gamma$}}
				\end{overpic}
				
				\vspace{0.5em}  
				
				$T_{\alpha,\delta}$
			\end{minipage}
			\captionof{figure}{} \label{figure-26}
		\end{minipage}	
		
		\begin{align} \label{eq-w4-nd2} \tag{w4-nd2}
		\sum_{q=1}^3 \sum_{e \in E(\ndGqAo)} \dfrac {c(e)} {|e|} = \dfrac{c(e_\alpha) }{|e_\alpha|} + \dfrac{c(e_\beta)}{|e_\beta|}.
		\end{align}
		
		\item Case \{w4-r0-nd3\}: Let $\Neighnd{A_0} = \{e_\alpha, e_\beta, e_\gamma \}$ with $\dtmor_0(e_\alpha) = t_\alpha$, $\dtmor_0(e_\beta) = t_\beta$, $\dtmor_0(e_\gamma) = t_\gamma$. Applying Case~\{aux-r0-nd3\} we get the local parts shown in Figure~\ref{figure-27}, and a formula.
		
		\vspace{1em}
		
		\noindent
		\begin{minipage}{\textwidth}
			\begin{minipage}[t]{.2\textwidth}

				\centering
				\begin{overpic}[scale=0.9]{\figsdir/253.pdf} 
					\put (45,26){\scalebox{0.8}{$w_0$}} 
					\put (16,0) {\scalebox{0.8}{$t_\alpha$}}
					\put (3,72) {\scalebox{0.8}{$t_\beta$}}
					\put (90,72){\scalebox{0.8}{$t_\gamma$}} 
					\put (75,0) {\scalebox{0.8}{$t_\delta$}}		 
				\end{overpic}
				
				\vspace{0.5em}
				
				$M_0$
			\end{minipage}\hspace{1em}
			\noindent \begin{minipage}[t]{.22\textwidth} 
				\centering
				\begin{overpic}{\figsdir/254.pdf} 
					\put (47,34) {\scalebox{0.8}{$t_1$}}
					\put (31,34) {\scalebox{0.8}{$u$}}
					\put (64,34) {\scalebox{0.8}{$v$}}
					\put (9,9) {\scalebox{0.8}{$t_\alpha$}}
					\put (0,62) {\scalebox{0.8}{$t_\beta$}}
					\put (96,62) {\scalebox{0.8}{$t_\gamma$}}
					\put (83,9) {\scalebox{0.8}{$t_\delta$}}		
				\end{overpic} 
				
				\vspace{0.5em}
				
				$T_{\alpha,\beta}$
			\end{minipage}\hspace{1em}
			\begin{minipage}[t]{.22\textwidth}
				\centering 
				\begin{overpic}{\figsdir/254.pdf} 
					\put (47,34) {\scalebox{0.8}{$t_1$}}
					\put (31,34) {\scalebox{0.8}{$u$}}
					\put (64,34) {\scalebox{0.8}{$v$}}
					\put (9,9) {\scalebox{0.8}{$t_\alpha$}}
					\put (0,62) {\scalebox{0.8}{$t_\gamma$}}
					\put (96,62) {\scalebox{0.8}{$t_\beta$}}
					\put (83,9) {\scalebox{0.8}{$t_\delta$}}
				\end{overpic} 
				
				\vspace{0.5em}
				
				$T_{\alpha,\gamma}$
			\end{minipage}\hspace{1em}
			\begin{minipage}[t]{.22\textwidth} 
				\centering
				\begin{overpic}{\figsdir/255.pdf} 
					\put (47,34) {\scalebox{0.8}{$t_1$}}
					\put (31,34) {\scalebox{0.8}{$u$}}
					\put (64,34) {\scalebox{0.8}{$v$}}
					\put (9,9) {\scalebox{0.8}{$t_\alpha$}}
					\put (0,62) {\scalebox{0.8}{$t_\delta$}}
					\put (96,62) {\scalebox{0.8}{$t_\beta$}}
					\put (83,9) {\scalebox{0.8}{$t_\delta$}}
				\end{overpic}
				
				\vspace{0.5em}  
				
				$T_{\alpha,\delta}$
			\end{minipage}
			\captionof{figure}{} \label{figure-27}
		\end{minipage}
		
		\begin{align} \label{eq-w4-nd3} \tag{w4-nd3}
		\sum_{q=1}^3 \sum_{e \in E(\ndGqAo)} \dfrac {c(e)} {|e|} = \dfrac{c(e_\alpha) }{|e_\alpha|} + \dfrac{c(e_\beta)}{|e_\beta|} + \dfrac{c(e_\gamma)}{|e_\gamma|}.  
		\end{align}
		
		\item We verify Equation~\eqref{eq-star} for Case~\{w4\} by putting together Lemma~\ref{lemma-formula-for-cq} and Equations \eqref{eq-w4-nd2}, \eqref{eq-w4-nd3}: 
		\begin{align} \label{eq-1}
		\Aq c 1 + \Aq c 2 + \Aq c 3 = \sum_{q=1}^3 \Aq \sigma q (1) &=  \sum_{q=1}^3 \, \sum_{A_0 \in \inv \dtmor_0(A)} \, \sum_{e \in E(\ndGqAo)} \dfrac {c(e)} {|e|} \\
		&= \sum_{j=2}^5 \Aq \sigma q (j) = 0. \nonumber
		\end{align}
	\end{itemize}

	\subsubsection{Case \{w3\}} \label{sub-deg-three} 
	
	Assume that $\vale w_0$ is 3. Then $\ch w_0 = 1$ by Equation~\eqref{eq-C}. There is exactly one vertex of $G_0$ with $r_0$-value 1 above $w_0$, the others have $r_0$-value 0. By Subsection~\ref{sub-graphs-to-T0}, the possible base trees contracting to $T_0$ are $T_{2}$, $T_{3}$ and~$T_{4}$; all with $\vale u = 2$, $\vale v = 3$. 
	Let $\{\alpha\} = \Aq S q$. We analyse the cases where $r_0(A_0)$ is either 0 or~1.
	
	\begin{itemize}[leftmargin=*]
		\item Case \{w3-r0\}: Assume that $r_0(A_0) = 0$, and let $A_u$ in $\Aq {G_{A_0}} q$ be non-dangling above~$u$. Then $\Aq r q(A_u) = 0$. Since $\vale u = 2$, only Case~(r0-nd2) of the local properties is possible, so $\nddeg A = 2$ and as a subset of $[d]$ the class $A_u$ is equal to both of the elements in $\Neighnd{A_u}$. Since any non-dangling edge in $\inv {(\Aq \dtmor q)}(t_\alpha)$ has a non-dangling end above~$u$, we conclude that:
		\[\Aq \sigma q(J_0,1) = \Aq \sigma q (J_0,\alpha) = \sigma_0(J_0,\alpha). \] 
		
		\item Case \{w3-r1\}: Since $H_0$ is trivalent either $\nddeg {A_0}$ is 3 or 2.
		
		\item Case \{w3-r1-nd3\}: Assume that  $\nddeg {A_0}$ is 3. By the no-return condition $\dtmor_0(\Neighnd{A_0})$ has at least two elements. So let $e_2, e_3, e_4$ be the edges in $\Neighnd{A_0}$, labelled such that $|e_4| = \max(|e_2|, |e_3|, |e_4|)$, $\dtmor_0(e_3) = t_3$ and $\dtmor_0(e_4) = t_4$. Let $k_i = |e_i|$. 
		Lemma~\ref{lem-rphi-nd} (nd.~$r_\dtmor$ formula) gives $r_0(A_0) = 1 = 3 + 2|A_0| - 2 - (k_2 + k_3 + k_4)$. That is, $2|A_0| = k_2 + k_3 + k_4$. If $e_2$ were above $t_4$, then $k_2 + k_4 \le |A_0|$. Substituting in $k_3 = 2|A_0| - (k_2 + k_4)$ gives $k_3 \ge |A_0| \ge k_2 + k_4 > k_4$, a contradiction. So $e_2$ is either above $t_2$ or above~$t_3$.
		
		\item Case \{w3-r1-nd3-t2\}: Assume that $e_2$ is above $t_2$.  
		Let $\{\alpha, \beta, \gamma \}=\{2,3,4\}$.
		Recall from Subsection~\ref{subsection-the-graph-GqA0} that $\Aq A q$ is the unique vertex of $\Aq G q$ with $\nddeg \Aq A q = 3$ that contracts to $A_0$.
		We say that $\Aq M q$ is in Position I or II if $\Aq A q$ is above $u$ or $v$, respectively. We prove that $\Aq T q$ and the position of $\Aq A q$ determine~$\Aq M q$.
		
		Position I: Assume that $\Aq A q$ is above $u$. 
		Since $\vale u = 2$ and $\nddeg \Aq A q = 3$ we have that $\Aq A q$ belongs to Case~(r1-nd3) of the local properties (assuming Case~(r0-nd3) contradicts that $\vale u = 2$).
		So there are exactly two edges $e', e''$ of $\Neighnd{\Aq A q}$ above~$t_1$; moreover $|e_\alpha| = |\Aq A q| = |e'| + |e''|$. 
		By Lemma~\ref{lemma-edges-in-GqA0} these are the two edges of $\ndGqAo$. Hence, $\ndGqAo$ has three vertices. By Lemma~\ref{lemma-vertices-in-GqA0} these are: $\Aq A q$ incident to $\Aq {e_\alpha} q$; and the two ends of $\Aq {e_\beta} q$, $\Aq {e_\gamma} q$ above $v$, with non-dangling valency 2 and $\Aq r q$-value equal to~0. By the Case (r0-nd2) of the local properties we conclude that as subsets of $[d]$, the edges $e'$, $\Aq {e_\beta} q$ and their common end are equal, and $h(e') = h(\Aq {e_\beta} q)$; similarly with $e''$ and $\Aq {e_\gamma} q$. Thus, $|e_\alpha| = |e_\beta| + |e_\gamma|$ and $|\Aq A q| = |A_0|$, so this construction is possible only when $\alpha = 4$ and $k_4 = |A_0|$.
		
		Position II: Assume that $\Aq A q$ is above $v$. 
		Since $\vale v = 3$ and $\nddeg \Aq A q = 3$ we have that $\Aq A q$ belongs to Case~(r0-nd3) of the local properties (assuming Case~(r1-nd3) contradicts that $\vale v = 3$). So there is exactly one edge $e'$ of $\Neighnd{\Aq A q}$ above~$t_1$. 
		Let $A'$ be the other end of $e'$, which by Lemma~\ref{lemma-ndval-of-GqA0} has non-dangling valency 2. Lemma~\ref{lemma-edges-in-GqA0} implies that the two vertices of $\ndGqAo$ are $\Aq A q$ and $A'$. Hence, by Lemma~\ref{lemma-vertices-in-GqA0} $A'$ is incident to $\Aq {e_\alpha} q$; and $h(\Aq {e_\alpha} q) = h(e')$. Since $r_0(A_0) = 1$, we have that $A'$ belongs to Case~(r1-nd2) of the local properties and this gives two possible cases for the values of $|A'|$, $|e'|$ and $|\Aq A q|$:
		
		Position II.a: $|A'| = |e'|$ and $|A'| = |\Aq {e_\alpha} q| + 1$. In this case, as subsets of $[d]$, we get that $\Aq {e_\alpha} q \subset A'$, that $A'$ and $e'$ are equal, that $e' \subset \Aq A q$, and therefore, by Lemma~\ref{lemma-class-union}, that $A_0 = \Aq A q \cup A' = \Aq A q$. We also get that $|A_0| \ge |A'| > k_\alpha$.
		
		Position II.b: $|A'| = |e'| + 1$ and $|A'| = |\Aq {e_\alpha} q|$. As subsets of $[d]$ we have that $e' \subset A' \cap \Aq A q$, so $A' \setminus \Aq A q$ is either empty or a singleton. The former would imply that for $i \in A' \setminus e'$, the classes $e'$ and $\class {t_1} i _{\Aq M q}$ are distinct edges of $\Aq G q$ above $t_1$ with equal ends, which gives a cycle over $t_1$, a contradiction. So $|A_0| = |\Aq A q| + |A'| - |\Aq A q \cap A'| = |\Aq A q| + 1$. The ends of $\Aq {e_\beta} q$ and $\Aq {e_\gamma} q$ are above $v$, so $|\Aq A q| \ge k_\beta, k_\gamma$. Hence, $|A_0| > \max(k_\beta, k_\gamma)$, so $k_\alpha \ge 2$.

		Since Position~I implies $|A_0| = k_4$, we treat two cases: $|A_0| = k_4$ and $|A_0| > k_4$.
		
		\item Case \{w3-r1-nd3-t2-(a=k4)\}: Assume that $|A_0| = k_4$. Then $k_2 + k_3 = |A_0|$, so $k_4 > k_2, k_3$. Let $\alpha = 4$, so $|A_0| = k_4$, which precludes Position II.a. Position~I and Position~II.b give $\Aq M 1$ and $\Aq M 2$, respectively. Let $\alpha = 2$, which precludes Position~I. Then it is not true that $|A_0| > \max(k_\beta, k_\gamma) = k_4$, which precludes Position~II.b as well. Position~II.a gives $\Aq M 3$. An analogous argument with $\alpha = 3$ gives $\Aq M 4$. 
		See Figure~\ref{figure-28}.
		
		\vspace{1em}
		
		\noindent
		\begin{minipage}{.3\textwidth}
			\centering
			\begin{overpic}[scale=1]{\figsdir/145.pdf}  
				\put (37,74) {\scalebox{1}{$A_0$}}
				\put (60,19) {\scalebox{1}{$t_4$}}
				\put (26,84) {\scalebox{1}{$t_2$}}
				\put (26,8) {\scalebox{1}{$t_3$}}
				\put (35,19) {\scalebox{1}{$w_0$}}
				
				\put (-2,93) {\scalebox{1}{$e_2$}}
				\put (-2,3) {\scalebox{1}{$e_3$}} 
				\put (83,48) {\scalebox{1}{$e_4$}}  
			\end{overpic}
			
			$M_0$
		\end{minipage}\hspace{1em} 
		\begin{minipage}{.25\textwidth}
			\centering
			{ \small
				\begin{align*}
				\dfrac {c(e_4)} {k_2 + k_3} + \sigma_0(J_0, 4) &= 0\\
				\dfrac {c(e_2)} {k_2} + \sigma_0(J_0, 2) &= 0\\
				\dfrac {c(e_3)} {k_3} + \sigma_0(J_0, 3) &= 0
				\end{align*}
			}
		\end{minipage}\hspace{0.5em}
		\begin{minipage}{.33\textwidth}
			\centering
			\begin{overpic}[scale=1]{\figsdir/148.pdf}  
				\put (43,4) {\scalebox{1}{$v$}}
				\put (70,4) {\scalebox{1}{$u$}}
				\put (56,4) {\scalebox{1}{$t_1$}}
				\put (70,67) {\scalebox{1}{$A^{(1)}$}}  
			\end{overpic}
			
			$M^{(1)}$
			{\small
				\begin{align*}
				&|e'| = k_2 \quad \quad |e''| = k_3
				\\
				&\Aq \sigma q(J_0, 1) = \sigma_0(J_0, 4)\\
				&\Aq c1 = \dfrac {c(e_2)} {k_2} + \dfrac {c(e_3)} {k_3} + \Aq \sigma q(J_0, 1) 
				\end{align*}
			}
		\end{minipage}

		\noindent
		\begin{minipage}{\textwidth}
			\noindent	\begin{minipage}[t]{.3\textwidth}
				\centering
				\begin{overpic}[scale=1]{\figsdir/149.pdf} 
					\put (44,12) {\scalebox{1}{$v$}}
					\put (70,12) {\scalebox{1}{$u$}}
					\put (56,12) {\scalebox{1}{$t_1$}}
					\put (43,75) {\scalebox{1}{$A^{(2)}$}}
					
					\put (14,93) {\scalebox{1}{$e_2$}}
					\put (-2,6) {\scalebox{1}{$e_3$}} 
					\put (101,44) {\scalebox{1}{$e_4$}}   
				\end{overpic}
				
				$M^{(2)}$ \\ \vspace{0.5em}
				{\small $
					\begin{aligned}
					&|e'| = k_4 - 1 = k_2 + k_3 - 1\\ 
					&\Aq \sigma 2(J_0, 1) = \sigma_0(J_0, 4)\\
					&c^{(2)} = \dfrac {c(e_4)} {k_2 + k_3 - 1} + \Aq \sigma 2(J_0, 1)
					\end{aligned} $
				}
			\end{minipage} \hspace{1em}
			\begin{minipage}[t]{.3\textwidth}
				\centering
				\begin{overpic}[scale=1]{\figsdir/146.pdf}
					\put (30,89) {\scalebox{1}{$u$}}
					\put (46,73) {\scalebox{1}{$v$}}
					\put (38,81) {\scalebox{1}{$t_1$}}
					\put (41,10) {\scalebox{1}{$A^{(3)}$}} 
				\end{overpic}
				
				$M^{(3)}$ \\ \vspace{0.5em}
				{\small $
					\begin{aligned}
					&|e'| = k_2 + 1
					\\
					&\Aq \sigma 3(J_0, 1) = \sigma_0(J_0, 2)\\
					&c^{(3)} = \dfrac {c(e_2)} {k_2 + 1} + \Aq \sigma 3(J_0, 1)
					\end{aligned} $
				}
			\end{minipage}\hspace{1em}
			\begin{minipage}[t]{.3\textwidth}
				\centering
				\begin{overpic}[scale=1]{\figsdir/147.pdf} 
					\put (31,0) {\scalebox{1}{$u$}}
					\put (45,12) {\scalebox{1}{$v$}}
					\put (39,6) {\scalebox{1}{$t_1$}}
					\put (45,74) {\scalebox{1}{$A^{(4)}$}} 
				\end{overpic}
				
				$M^{(4)}$ \\ \vspace{0.5em}
				{\small $
					\begin{aligned}
					&|e'| = k_3 + 1\\
					&\Aq \sigma 4(J_0, 1) = \sigma_0(J_0, 3)\\
					&c^{(4)} = \dfrac {c(e_3)} {k_3 + 1} + \Aq \sigma 4(J_0, 1) 
					\end{aligned} $
				}
			\end{minipage} 
			\captionof{figure}{} \label{figure-28}
		\end{minipage}
		
		\noindent To verify Equation~\eqref{eq-star} we compute: 
		\begin{align} \label{eq-2}
		&c^{(1)} + (k_2 + k_3 - 1)c^{(2)} +(k_2+1) c^{(3)} + (k_3+1) c^{(4)}\\
		\nonumber 
		&= \left( \tfrac{(k_2 + 1)}{k_2}c(e_2) + (k_2 + 1)\sigma_0(J_0, 2) \right) +
		\left(\tfrac{(k_3 + 1)}{k_3}c(e_3) + (k_3 + 1)\sigma_0(J_0, 3) \right)\\ \nonumber  	
		&+ \left(c(e_4) + (k_2 + k_3)\sigma_0(J_0, 4)  \right)
		= 0 + 0 + 0 = 0.
		\end{align}
		
		\vspace{1em}
		
		\item Case \{w3-r1-nd3-t2-(a>k4)\}: Assume that $|A_0| > k_4$, which precludes Position~I. If $\min(k_2, k_3, k_4)$ were 1, then $k_2 + k_3 + k_4 \le 2|A_0| - 1$, contradicting $0 = 2|A| - k_2 - k_3 - k_4$. Hence, $\min(k_2, k_3, k_4) \ge 2$, so both cases of Position II are possible. It turns out that they balance each other for each possible~$\alpha$. The case $\alpha = 4$ is shown in Figure~\ref{figure-29}. 
		\vspace{1em} 
		
		\noindent
		\begin{minipage}{\textwidth}
			\begin{minipage}{.3\textwidth}
				\centering
				\begin{overpic}[scale=1]{\figsdir/150.pdf} 
					\put (60,18) {\scalebox{1}{$t_4$}}
					\put (26,82) {\scalebox{1}{$t_2$}}
					\put (26,8) {\scalebox{1}{$t_3$}}
					\put (34,18) {\scalebox{1}{$w_0$}}
					\put (36,70) {\scalebox{1}{$A_0$}}
					
					\put (-2,79) {\scalebox{1}{$e_2$}}
					\put (-2,10) {\scalebox{1}{$e_3$}} 
					\put (92,51) {\scalebox{1}{$e_4$}}  
				\end{overpic}
				
				$M_0$
			\end{minipage}\hspace{0.1em}
			\begin{minipage}{.3\textwidth}
				\centering
				\begin{overpic}[scale=1]{\figsdir/151.pdf} 
					\put (47,18) {\scalebox{1}{$t_1$}}
					\put (60,18) {\scalebox{1}{$u$}}
					\put (36,18) {\scalebox{1}{$v$}}
					\put (36,70) {\scalebox{1}{$A^{(1)}$}} 
				\end{overpic}
				
				$M^{(1)}$
			\end{minipage}\hspace{0.5em}
			\begin{minipage}{.3\textwidth}
				\centering
				\begin{overpic}[scale=1]{\figsdir/152.pdf} 
					\put (47,18) {\scalebox{1}{$t_1$}}
					\put (60,18) {\scalebox{1}{$u$}}
					\put (36,18) {\scalebox{1}{$v$}}
					\put (36,70) {\scalebox{1}{$A^{(2)}$}}  
				\end{overpic}
				
				$M^{(2)}$
			\end{minipage}\\
			\begin{minipage}{.3\textwidth}
				\centering
				{\small $
					\begin{aligned}
					&\dfrac {c(e_4)} {k_4} + \sigma_0(J_0, 4) = 0
					\end{aligned}$
				}
			\end{minipage}\hspace{0.5em}
			\begin{minipage}{.3\textwidth}
				\centering
				{\small $
					\begin{aligned}
					&|e'| = k_4 - 1\\
					&\Aq \sigma 1(J_0, 1) = \sigma_0(J_0, 4)\\
					&c^{(1)} = \dfrac {c(e_4)} {k_4-1} + \Aq \sigma 1(J_0, 1) 
					\end{aligned} $
				}
			\end{minipage}\hspace{0.5em}
			\begin{minipage}{.3\textwidth}
				\centering
				{\small $
					\begin{aligned}
					&|e'| = k_4 + 1\\
					&\Aq \sigma 2(J_0, 1) = \sigma_0(J_0, 4)\\
					&c^{(2)} = \dfrac {c(e_4)} {k_4+1} + \Aq \sigma 2(J_0, 1) 
					\end{aligned} $
				}
			\end{minipage}
			\captionof{figure}{} \label{figure-29}
		\end{minipage}
		
		\noindent We conclude that:
		\begin{align*} 
		&(k_4-1)c^{(1)} + (k_4+1)c^{(2)} 
		= 2c(e_4) + 2k_4\sigma_0(J_0, 4) = 0.
		\end{align*}

		Analogous constructions exist for $\alpha = 2$, which gives $\Aq M 3$, $\Aq M 4$; and $\alpha = 3$, which gives $\Aq M 5$, $\Aq M 6$. 
		We verify Equation~\eqref{eq-star}:
		\begin{align} \label{eq-3}
		& \left( (k_4-1)c^{(1)} + (k_4+1)c^{(2)} \right) + 
		\left( (k_2-1)c^{(3)} + (k_2+1)c^{(4)} \right) + \\ \nonumber
		& \left( (k_3-1)c^{(5)} + (k_3+1)c^{(6)} \right) = 0 + 0 + 0 = 0.
		\end{align}
		
		\vspace{1em}
		
		\item Case \{w3-r1-nd3-t3\}: Assume that $e_2$ is above $t_3$. Then $k_2 + k_3 \le |A_0|$. Recall that $2|A_0| = k_2 + k_3 + k_4$ and $|A_0| \ge k_i$, so $k_4 = |A_0|$, $k_2 + k_3 = |A_0|$ and $|A_0| > k_2, k_3$. We argue that $\Aq T q$ determines $\ndGqAo$. If $\alpha$ were 2, then by Lemma~\ref{lemma-vertices-in-GqA0} all the vertices of $\ndGqAo$ would be above $v$, so $\ndGqAo$ would have a single vertex with $\Aq r q$-value 1 above trivalent $v$, contradicting that $\Aq M q$ is change-minimal. For $\alpha = 3,4$, we get that $\ndGqAo$ has vertices above $u$ and $v$. The vertices above $v$ belong to the Case~(r0-nd2) of the local properties, since Case~(r0-nd3) would produce a non-dangling element above~$t_2$. So if $\alpha = 3$, we get $\Aq M 1$ where $\Aq {e_2} q$, $\Aq {e_3} q$ have ends above $v$, and by Case~(r0-nd2) these determine the two edges $e'$, $e''$ of $\ndGqAo$; similarly, if $\alpha = 4$ we get $\Aq M 2$ where $\Aq {e_4} q$ has an end above $v$ and this determines the edge $e'$ of $\ndGqAo$. So the vertex of $\ndGqAo$ above $u$ is $\Aq A q$, and it belongs to Case~(r1-nd3). So $\Aq A q$ equals $e_4$, equals $A_0$ as subsets of $[d]$. 
		%
		%
		%
		%
		%
		This gives $\Aq M 1$ and $\Aq M 2$ for $\alpha = 3,4$, respectively. See Figure~\ref{figure-30}.
		\vspace{1em}
		
		\noindent		
		\begin{minipage}{\textwidth}
			\begin{minipage}[t]{.3\textwidth}
				\centering 
				\begin{overpic}[scale=1]{\figsdir/284.pdf}  
					\put (25,9) {\scalebox{1}{$t_3$}}
					\put (62,9) {\scalebox{1}{$t_4$}}
					\put (46,9) {\scalebox{1}{$w_0$}}
					\put (-5,44) {\scalebox{1}{$e_2$}}	
					\put (-5,21) {\scalebox{1}{$e_3$}}	
					\put (98,34) {\scalebox{1}{$e_4$}}	 
					\put (46,55) {\scalebox{1}{$A_0$}}
				\end{overpic}

				$M_0$ \\ \vspace{0.5em}
				\centering
				{\small
					$\begin{aligned}
					&\dfrac {c(e_2)} {k_2}  + \dfrac {c(e_3)} {k_3} + \sigma_0(J_0, 3) = 0\\
					&\dfrac {c(e_4)} {k_2 + k_3}  + \sigma_0(J_0, 4) = 0
					\end{aligned}$ } 
			\end{minipage}\hspace{0.5em}
			\begin{minipage}[t]{.3\textwidth} 
				\centering
				\begin{overpic}[scale=1]{\figsdir/285.pdf}   
					\put (42,10) {\scalebox{1}{$t_1$}}
					\put (33,10) {\scalebox{1}{$u$}}
					\put (57,10) {\scalebox{1}{$v$}}
					\put (34,55) {\scalebox{1}{$A^{(1)}$}}
				\end{overpic}  
				
				$\Aq M 1$ \\ \vspace{0.5em} {\small
					$\begin{aligned}
					&|e'| = k_4 = k_2 + k_3\\
					&\Aq \sigma 1(J_0, 1) = \sigma_0(J_0, 3)\\
					&\Aq c 1 = \dfrac {c(e_4)} {k_2 + k_3} + \Aq \sigma 1(J_0, 1)
					\end{aligned}$ }
			\end{minipage}\hspace{0.5em}
			\begin{minipage}[t]{.3\textwidth} 
				\centering
				\begin{overpic}[scale=1]{\figsdir/286.pdf}  
					\put (45,10) {\scalebox{1}{$t_1$}}
					\put (35,10) {\scalebox{1}{$v$}}
					\put (57,10) {\scalebox{1}{$u$}}
					\put (56,55) {\scalebox{1}{$A^{(2)}$}}
				\end{overpic}
				
				$\Aq M 2$ \\ \vspace{0.5em} {\small
					$\begin{aligned}
					&|e'| = k_2 \quad \quad |e''| = k_3\\
					&\Aq \sigma 2(J_0, 1) = \sigma_0(J_0, 4)\\
					&\Aq c2 = \dfrac {c(e_2)} {k_2}  + \dfrac {c(e_3)} {k_3} \\&\qquad+  \Aq \sigma 2(J_0, 1)
					\end{aligned}$ } 
			\end{minipage}
			\captionof{figure}{} \label{figure-30}
		\end{minipage}

		\noindent We verify Equation~(\ref{eq-star}):
		\begin{align} \label{eq-4}
		&c^{(1)} + c^{(2)} = \\
		&\left( \tfrac 1 {k_2 + k_3 } c(e_4) + \sigma_0(J_0, 4) \right) +
		\left( \tfrac 1 {k_2} c(e_2) + \tfrac 1 {k_3} c(e_3) + \sigma_0(J_0, 3) \right)  = 0 + 0 = 0. \nonumber 
		\end{align}
		
		\vspace{1em}
		
		\item Case \{w3-r1-nd2\}: Assume that $\nddeg A_0$ is 2. Let $e, e'$ be in $\Neighnd{A_0}$. They are in the same edge $h$ of $H_0$. Assume that $|e| \le |e'|$. By no-return we may assume that $e$ and $e'$ are above $t_3$ and $t_4$ respectively. Lemma~\ref{lem-rphi-nd} (nd.~$r_\dtmor$ formula) gives $r_\varphi(A_0) = 1 = 2|A_0| - |e'| - |e|$. Thus, if $k=|e|$, then $|A_0| = |e'| = k + 1$. From here, follow a  reasoning analogous to case \{w3-nd3-t2\} to show that $\Aq T q$ determines $\Aq M q$, and that the vertices of $\ndGqAo$ above $v$ belong to Case~(r0-nd2) of the local properties, and the ones above $u$ to Case~(r1-nd2). See Figure~\ref{figure-31}.
		\vspace{1em}
		
		\noindent
		\begin{minipage}{\textwidth}
			\begin{minipage}[t]{.3\textwidth} 
				\centering 
				\begin{overpic}[scale=1]{\figsdir/287.pdf} 
					\put (25,9) {\scalebox{1}{$t_3$}}
					\put (62,9) {\scalebox{1}{$t_4$}}
					\put (46,9) {\scalebox{1}{$w_0$}}
					\put (-2,20) {\scalebox{1}{$e$}}
					\put (98, 27) {\scalebox{1}{$e'$}}	
					\put (46,43) {\scalebox{1}{$A_0$}}
				\end{overpic}
				
				$M_0$
				\begin{align*}
				&\dfrac{c_h} {k } + \sigma_0(J_0, 3) = 0\\
				&\dfrac{c_h}{k+1} + \sigma_0(J_0, 4) = 0
				\end{align*}
			\end{minipage}\hspace{0.5em}
			\begin{minipage}[t]{.3\textwidth}
				\centering
				\begin{overpic}[scale=1]{\figsdir/288.pdf}  
					\put (45,10) {\scalebox{1}{$t_1$}}
					\put (35,10) {\scalebox{1}{$u$}}
					\put (57,10) {\scalebox{1}{$v$}}
				\end{overpic}
				
				$M^{(1)}$
				\begin{align*}
				&\Aq \sigma 1(J_0, 1) = \sigma_0(J_0, 3)\\
				&\Aq c1 = \dfrac {c_h} {k+1} + \Aq \sigma 1(J_0, 1)
				\end{align*}
			\end{minipage}\hspace{0.5em}
			\begin{minipage}[t]{.30\textwidth} 
				\centering
				\begin{overpic}[scale=1]{\figsdir/289.pdf} 
					\put (45,10) {\scalebox{1}{$t_1$}}
					\put (35,10) {\scalebox{1}{$v$}}
					\put (57,10) {\scalebox{1}{$u$}}
				\end{overpic}
				
				$M^{(2)}$
				\begin{align*} 
				&\Aq \sigma 2(J_0, 1) = \sigma_0(J_0, 4)\\
				&\Aq c 2 = \dfrac {c_h} {k} +  \Aq \sigma 2(J_0, 1)
				\end{align*} 
			\end{minipage}
			\captionof{figure}{} \label{figure-31}
		\end{minipage}
		
		\noindent We verify Equation~(\ref{eq-star}):
		\begin{align} \label{eq-5}
		c^{(1)} + c^{(2)} = 
		\left (  \tfrac {1} {k} c_h + \sigma_0(J_0, 3)\right) +
		\left (\tfrac {1} {k+1} c_h + \sigma_0(J_0, 4)\right)  = 0 + 0 = 0. 
		\end{align}

	\end{itemize}
	
	\subsubsection{Case \{w2\}} \label{sub-deg2nd3} 
	
	Assume that $\vale w_0$ is 2. Then $r_0(A_0) = \vale A_0 - 2$. So if $r_0(A_0) = 0$, then $\vale A_0 = 2$. Hence, $\sigma_0 (J_0, 2) = \sigma_0 (J_0, 3)$, and we denote this quantity by $s$. Also,  $\ch w_0 = 2$ by Equation~\eqref{eq-C}. 
	The trees contracting to $T_0$ are $T_{\varnothing}$ with $\vale u = 1$, $\vale v = 3$; and $T_2$ with $\vale u = \vale v = 2$. If $\Aq T q = T_\varnothing$, then Remark~\ref{rem-leaves-min-change} implies that $\Aq \sigma q(J_0, 1) = \Aq \sigma q (J_1, 1) = 0$, so $\det \Aq A q = \Aq \sigma q (J_2, 1)$. If $\Aq T q = T_2$, then a reasoning analogous to Case~\{w3-r0\} gives
	\begin{align*}
	\Aq \sigma q(J_0, 1) &= \Aq \sigma q (J_0, 2) = s, \\
	\Aq \sigma q(J_0, 1) &= \Aq \sigma q (J_0, 3) = s.
	\end{align*} 
	So we are left with the cases $r_0(A_0) = 2$ and $r_0(A_0) = 1$.
	
	\begin{itemize}[leftmargin=*] 
		\item Case \{w2-r2\}: Assume that $r_0(A_0) = 2$. Then $\vale A_0 = 4$. Since $A_0$ satisfies no-return, let $e_2$, $e_3$ be non-dangling edges in $\Neigh {A_0}$, above $t_2$, $t_3$, respectively. Let $e_1$, $e_4$ be the remaining two edges of $\Neigh {A_0}$. We may assume without loss of generality that $e_1$ is above $t_2$. By the refinement property:
		\[\sum_{\substack{e \in \Neigh{A_0} \\ \dtmor_0(e) = t_2}} |e| = \sum_{\substack{e \in \Neigh{A_0} \\ \dtmor_0(e) = t_3}} |e| = |A_0|.\]
		So in particular $\sum_{i=1}^4 k_i = 2|A_0|$. We explore how $\Aq T q$ affects $\ndGqAo$:
		
		Base I: assume that $\Aq T q$ is $T_{\varnothing}$. Then, by Remark~\ref{rem-leaves-min-change} (change-minimal leaves) we have that $\ndGqAo$ has a vertex $A_u$ above $u$ with $|A_u| = 2$, $\Aq r q(A_u) = 2$, and two incident edges $e'$, $e''$ with $|e'| = |e''| = 1$.  These are all the edges of $\ndGqAo$ by Lemma~\ref{lemma-edges-in-GqA0} and the fact that $r_0(A_0) = 2$. Thus, $A_u$ and the ends $A'$, $A''$ of $e'$, $e''$, respectively, are the vertices of $\ndGqAo$.  We may assume that $\Aq {e_2} q$ and $A'$ are incident. There are two possibilities for $\Neigh {A'}$ and $\Neigh {A''}$. Either $\Aq {e_3} q$ is incident to $A'$ or $A''$.  We call the first case Base~I.a. Since $\Aq r q (A') = \Aq r q(A'') = 0$, we have that $k_2 = |A'| = k_3$, $k_1 = |A''| = k_4$. We call the second case Base~I.b; it implies that $k_2$ equals the cardinality of the edge above $t_3$ incident to $A'$. So either $k_2 = k_4$ and $k_3 = k_1$; or $k_2 = k_1$ and $k_3 = k_4$. In particular, here the two edges of $\Neigh {A_0}$ are above $t_2$, and the other two above $t_3$. See Case~\{w2-r2-nd3-M-11\} for figures of both possibilities.
		
		Base II: assume that $\Aq T q$ is $T_2$. Then the vertices of $\ndGqAo$ belong to Cases~(r0-nd2), (r1-nd2), or (r1-nd3) of the local properties. By Lemma~\ref{lemma-ndval-of-GqA0} at most one vertex of $\ndGqAo$ belongs to Case~(r1-nd3). So by Lemma~\ref{lemma-edges-in-GqA0} we get that $\ndGqAo$ has either one edge $e'$; or two edges $e'$, $e''$. We call them Base~II.1, Base~II.2, respectively.

		\item Case \{w2-r2-nd3\}: Assume that $\nddeg A_0$ is 3. We may assume without loss of generality that $e_4$ is dangling. So $k_4 = 1$ and $2|A_0| - 1= \sum_{i=1}^3 k_i$. If $e_4$ is above $t_3$, then $k_3 + 1 = |A_0| = k_1 + k_2$. If $e_4$ is above $t_2$, then $k_3 = |A_0| = k_1 + k_2 + 1$. 
		%
		%
		We call these two possibilities Cardinality~M and Cardinality~P.
		
		For Base~II we have that exactly one vertex of $\ndGqAo$ belongs to Case~(r1-nd3), so the other vertex of $\ndGqAo$ with $\Aq r q$-value 1 belongs to Case~(r1-nd2). 
		
		In Base~II.1, Lemma~\ref{lemma-vertices-in-GqA0} implies that the ends of $\Aq {e_1} q$, $\Aq {e_2} q$ above $u$ are the same vertex of $\ndGqAo$; by no-return this vertex has non-dangling valency 3, so it is $\Aq A q$. By Case~(r1-nd3) we have that $e_1 \cup e_2$, $\Aq A q$, and $e'$ are equal as subsets of $[d]$; therefore, $|e'|=|\Aq A q| = k_1 + k_2$. Hence, the other end $A'$ of $e'$ (which is above $v$, incident to $\Aq {e_3} q$) belongs to Case~(r1-nd2). Either $|e'| = |A'| = k_3 + 1$, or $|e'| = |A'|-1 = k_3 - 1$. The first possibility implies $k_1 + k_2 = k_3 + 1$, namely Cardinality~M; the second possibility implies $k_1 + k_2 = k_3 - 1$, namely Cardinality~P. We call these possibilities Base~II.1.M and Base~II.1.P, respectively.
		
		In Base~II.2, $\ndGqAo$ has three vertices. By Lemma~\ref{lemma-vertices-in-GqA0} these are the ends $A'$, $A''$ of $\Aq {e_1} q$, $\Aq {e_2} q$, respectively, above $u$,  and the end $B$ of $\Aq {e_3} q$ above~$v$. By no-return we may assume that $e'$, $e''$ are incident to $A'$, $A''$, respectively. Since these are all the edges of $\ndGqAo$, $\nddeg A' = \nddeg A'' = 2$, and $\nddeg B = 3$. So $B$ is $\Aq A q$ and $h(\Aq {e_1} q) = h(e')$, $h(\Aq {e_2} q) = h(e'')$. 
		So Case~(r1-nd3) gives that $k_3 = |\Aq A q| = |e'| + |e''|$. One of $A'$, $A''$ belongs to Case~(r1-nd2) and the other to Case~(r0-nd2). If $A'$ belongs to Case~(r1-nd2) then either $|e'| = |A'|-1 = k_1 - 1$, so $k_1 > 1$ and $(k_1-1) + k_2 = k_3$, that is Cardinality~M; or $|e'| = |A'|+1 = k_1 + 1$, which gives $(k_1+1) + k_2 = k_3$, that is Cardinality~P. We call these possibilities Base~II.2.1.M and Base~II.2.1.P, respectively. Analogously, if $A''$ belongs to Case~(r1-nd2) we get Base~II.2.2.M and Base~II.2.2.P, where the former gives $k_2 > 1$.
		
		\item Case \{w2-r2-nd3-M\}: Assume that $e_4$ is above~$t_3$, namely Cardinality~M. The possibilities are Base~I.a, Base~I.b, Base~II.1.M, Base~II.2.1.M, and Base~II.2.2.M.
		
		\item Case \{w2-r2-nd3-M-11\}: \label{case-w2-r2-nd3-M-11} Assume that $k_1 = 1$ and $k_2 = 1$. Base~II.2.1.M, Base~II.2.2.M are precluded since $k_1 \not>1$, $k_2\not>1$, respectively. Base~I.a, Base~I.b, Base~II.1.M determine $\Aq M 1$, $\Aq M 2$ and $\Aq M 3$, respectively. See Figure~\ref{figure-32}.
		
		\begin{minipage}{.45\textwidth}
			\centering {\small
				\begin{overpic}[scale=1.3]{\figsdir/290.pdf} 
					\put (51,29) {\scalebox{1}{$A_0$}}
					\put (35,4) {\scalebox{1}{$t_2$}}
					\put (67,4) {\scalebox{1}{$t_3$}}
					\put (51,4) {\scalebox{1}{$w_0$}}
					\put (-3,25) {\scalebox{1}{$e_1$}}	
					
					\put (-3,10) {\scalebox{1}{$e_2$}}	
					\put (100,10) {\scalebox{1}{$e_3$}}	 
			\end{overpic} }
			
			$M_0$
		\end{minipage}%
		\begin{minipage}{0.45\textwidth}
			\centering
			\begin{align*}
			&c(e_1) + c(e_2) + s= 0\\
			&c(e_1) + s = 0
			\end{align*}
		\end{minipage}
		
		\vspace{1em} 
		\noindent
		\begin{minipage}{\textwidth}
			\begin{minipage}[t]{.30\textwidth}
				\centering 
				\begin{overpic}[scale=1]{\figsdir/291.pdf}  
					\put (47,16) {\scalebox{1}{$t_1$}}
					\put (52,42) {\scalebox{1}{$A^{(1)}$}} 
					\put (39,4) {\scalebox{1}{$u$}} 
					\put (53,31) {\scalebox{1}{$v$}} 
					\put (-5,53) {\scalebox{1}{$e_1$}}	
					\put (-5,38) {\scalebox{1}{$e_2$}}	
					\put (98,38) {\scalebox{1}{$e_3$}}   
				\end{overpic}
				
				$M^{(1)}$ \\ \vspace{0.5em}
				{\small $
					\begin{aligned}
					&\Aq \sigma 1(J_0, 1) = \Aq \sigma 1(J_1, 1) = 0\\
					&\Aq c1 = 2c(e_1) 
					\end{aligned}$
				}
			\end{minipage}\hspace{0.8em}
			\begin{minipage}[t]{.30\textwidth} 
				\centering
				\begin{overpic}[scale=1]{\figsdir/292.pdf}     
					\put (47,16) {\scalebox{1}{$t_1$}}
					\put (48,58) {\scalebox{1}{$A^{(2)}$}}
					\put (39,4) {\scalebox{1}{$u$}} 
					\put (53,31) {\scalebox{1}{$v$}} 
				\end{overpic} 
				
				$M^{(2)}$ \\ \vspace{0.5em}
				\small{
					$\begin{aligned}
					&\Aq \sigma 2(J_0, 1) = \Aq \sigma 2(J_1, 1) = 0\\
					&\Aq c2 = 2c(e_2) 
					\end{aligned}$
				}
			\end{minipage}\hspace{0.8em}
			\begin{minipage}[t]{.30\textwidth} 
				\centering
				\begin{overpic}[scale=1]{\figsdir/293.pdf}       
					\put (48,31) {\scalebox{1}{$t_1$}}
					\put (37,31) {\scalebox{1}{$u$}} 
					\put (59,31) {\scalebox{1}{$v$}} 
					\put (36,58) {\scalebox{1}{$A^{(3)}$}}
				\end{overpic}
				
				$M^{(3)}$ \\ \vspace{0.5em}
				{\small
					$\begin{aligned}
					&|e| = 2\\ 
					&\Aq \sigma 3(J_0, 1) = s\\
					&c^{(3)} = \dfrac {c(e_3)} 2 +  s
					\end{aligned}$ 
				}
			\end{minipage}
			\captionof{figure}{} \label{figure-32}
		\end{minipage}
		
		\noindent We verify Equation~\eqref{eq-star}:
		\begin{align} \label{eq-6}  c^{(1)} + c^{(2)} + 4 c^{(3)} = 2(c(e_1) + c(e_2) + s) + 2(c(e_3) + s) = 2 \cdot 0 + 2\cdot0 = 0. \end{align}
		
		\vspace{1em}
		
		\item Case \{w2-r2-nd3-M-1k\}: Assume that $k_1 = 1$ and $k_2 \ge 2$. Let $k = k_2$, so $|A_0| = k + 1$ and $|e_3|=k$. Base~I.b and Base~II.2.1.M are precluded since $|e_2|\ne|e_4|$ and $k_1\not>1$, respectively. Base~I.a, Base~II.2.2.M, Base~II.1.M determine $\Aq M 1$, $\Aq M 2$ and $\Aq M 3$, respectively. See Figure~\ref{figure-33}.
		
		\vspace{1em}
		
		\noindent\begin{minipage}{.5\textwidth}
			\centering
			\begin{overpic}[scale=1.3]{\figsdir/294.pdf}   
				\put (33,-3) {\scalebox{1}{$t_2$}}
				\put (67,-3) {\scalebox{1}{$t_3$}}
				\put (48,-3) {\scalebox{1}{$w_0$}}
				\put (-3,37) {\scalebox{1}{$e_1$}}	
				\put (-3,13) {\scalebox{1}{$e_2$}}	
				\put (98,13) {\scalebox{1}{$e_3$}}	 
				\put (50,42) {\scalebox{1}{$A_0$}}
			\end{overpic} 
			\vspace{1em}
			
			$M_0$
		\end{minipage}%
		\begin{minipage}{0.45\textwidth}
			\centering
			\begin{align*}
			&c(e_1) + \dfrac {c(e_2)} k + s = 0\\
			&\dfrac {c(e_3)} k + s = 0
			\end{align*}
		\end{minipage}  
		
		\vspace{1em} 
		
		\noindent	
		\begin{minipage}{\textwidth}
			\begin{minipage}[t]{.30\textwidth}
				\centering 
				\begin{overpic}[scale=1]{\figsdir/295.pdf} 
					\put (47,15) {\scalebox{1}{$t_1$}}
					\put (52,61) {\scalebox{1}{$A^{(1)}$}} 
					\put (39,2) {\scalebox{1}{$u$}} 
					\put (53,29) {\scalebox{1}{$v$}} 
					\put (-5,69) {\scalebox{1}{$e_1$}}	
					\put (-5,45) {\scalebox{1}{$e_2$}}	
					\put (98,45) {\scalebox{1}{$e_3$}}  
				\end{overpic}
				
				$M^{(1)}$ \\ \vspace{0.5em}
				{\small
					$\begin{aligned}
					&\Aq \sigma 1(J_0, 1) = \Aq \sigma 1(J_1, 1) = 0\\
					&\Aq c 1 = 2c(e_1) 
					\end{aligned}$ } 
			\end{minipage} \hspace{0.5em}
			\begin{minipage}[t]{.3\textwidth} 
				\centering
				\begin{overpic}[scale=1]{\figsdir/296.pdf}    
					\put (51,28) {\scalebox{1}{$t_1$}}
					\put (40,28) {\scalebox{1}{$u$}}
					\put (63,28) {\scalebox{1}{$v$}}
					\put (62,73) {\scalebox{1}{$A^{(2)}$}}
				\end{overpic}  
				
				$M^{(2)}$ \\ \vspace{0.5em} {\small
					$\begin{aligned}
					&|e'| = 1 \quad |e''| = k - 1\\
					&\Aq \sigma 2(J_0, 1) = s\\
					&\Aq c 2 = c(e_1) + \dfrac {c(e_2)} {k-1} + s
					\end{aligned}$ }
			\end{minipage}\hspace{0.5em}
			\begin{minipage}[t]{.3\textwidth} 
				\centering
				\begin{overpic}[scale=1]{\figsdir/297.pdf}  
					\put (51,28) {\scalebox{1}{$t_1$}}
					\put (40,28) {\scalebox{1}{$u$}}
					\put (63,28) {\scalebox{1}{$v$}}
					\put (39,73) {\scalebox{1}{$A^{(3)}$}}
				\end{overpic}
				
				$M^{(3)}$ \\ \vspace{0.5em} {\small
					$\begin{aligned}
					&|e'| = k + 1\\  
					&\Aq \sigma 3(J_0, 1) = s\\
					&\Aq c 3 = \dfrac {c(e_3)} {k+1} + s
					\end{aligned}$ } 
			\end{minipage}
			\captionof{figure}{} \label{figure-33}
		\end{minipage}
		
		\noindent We verify Equation~(\ref{eq-star}):
		\begin{align} \label{eq-7}
		&c^{(1)} + 2(k-1) c^{(2)} + 2(k+1) c^{(3)} = \\ &2(kc(e_1) + c(e_2) + ks)+ (c(e_3) + ks) = 2 \cdot 0 + 0 = 0. \nonumber
		\end{align}
		
		\vspace{1em}
		
		\item Case \{w2-r2-nd3-M-kk\}: Assume that $k_1 \ge 2$ and $k_2 \ge 2$. Then $k_3 > k_1, k_2$, so Base~I is precluded since $|e_2|\not=|e_3|$ nor $|e_2|\not=|e_4|$. Base~II.2.1.M, Base~II.2.2.M, and Base~II.1.M determine $\Aq M 1$, $\Aq M 2$ and $\Aq M 3$, respectively. See Figure~\ref{figure-34}.
		
		\vspace{1em}
		
		\noindent \begin{minipage}{.5\textwidth}
			\centering
			\vspace{1em}
			\begin{overpic}[scale=1.1]{\figsdir/256.pdf}    
				\put (22,-6) {\scalebox{1}{$t_2$}}
				\put (75,-6) {\scalebox{1}{$t_3$}}
				\put (50,-6) {\scalebox{1}{$w_0$}}
				
				\put (-8,49) {\scalebox{1}{$e_1$}}	
				\put (-8,12) {\scalebox{1}{$e_2$}}	
				\put (102,23) {\scalebox{1}{$e_3$}}
				
				\put (50,67) {\scalebox{1}{$A_0$}}
			\end{overpic} 
			\vspace{1em}
			
			$M_0$
		\end{minipage}%
		\begin{minipage}{0.45\textwidth}
			\centering
			\begin{align*}
			&\dfrac {c(e_1)} {k_1} + \dfrac {c(e_2)} {k_2} + s = 0\\
			&\dfrac {c(e_3)} {k_1 + k_2 - 1} + s = 0
			\end{align*}
		\end{minipage}
		
		\vspace{1em}
		
		\begin{minipage}{\textwidth}  
			\noindent	\begin{minipage}[t]{.3\textwidth}
				\centering 
				\begin{overpic}[scale=1]{\figsdir/259.pdf}  
					\put (51,-6) {\scalebox{1}{$t_1$}}
					\put (32,-6) {\scalebox{1}{$u$}}
					\put (70,-6) {\scalebox{1}{$v$}}
					\put (62,52) {\scalebox{1}{$\Aq A 1$}} 
					
					\put (-10,49) {\scalebox{1}{$e_1$}}	
					\put (-10,13) {\scalebox{1}{$e_2$}}	
					\put (102,25) {\scalebox{1}{$e_3$}}
				\end{overpic}
				
				\vspace{1em}
				
				$M^{(1)}$ {\small
					\begin{align*}
					&|e'| = k_1 - 1 \quad |e''| = k_2\\
					&\Aq \sigma 1(J_0, 1) = s\\
					&c^{(1)} = \dfrac {c(e_1)} {k_1-1} + \dfrac {c(e_2)} {k_2} + s
					\end{align*} }
			\end{minipage}\hspace{0.5em}
			\begin{minipage}[t]{.3\textwidth} 
				\centering
				\begin{overpic}[scale=1]{\figsdir/260.pdf}   
					\put (51,-6) {\scalebox{1}{$t_1$}}
					\put (32,-6) {\scalebox{1}{$u$}}
					\put (70,-6) {\scalebox{1}{$v$}}
					\put (65,66) {\scalebox{1}{$\Aq A 2$}}
				\end{overpic}  
				
				\vspace{1em}
				
				$M^{(2)}$
				\begin{align*}
				&|e'| = k_1  \quad |e''| = k_2 - 1\\
				&\Aq \sigma 2(J_0, 1) = s\\
				&\Aq c 2 = \dfrac {c(e_1)} {k_1} + \dfrac {c(e_2)} {k_2-1} +  s
				\end{align*}
			\end{minipage}\hspace{0.5em}
			\begin{minipage}[t]{.3\textwidth} 
				\centering
				\begin{overpic}[scale=1]{\figsdir/261.pdf} 
					\put (51,-6) {\scalebox{1}{$t_1$}}
					\put (32,-6) {\scalebox{1}{$u$}}
					\put (70,-6) {\scalebox{1}{$v$}}
					\put (30,66) {\scalebox{1}{$\Aq A 3$}}
				\end{overpic}
				
				\vspace{1em}
				
				$M^{(3)}$
				\begin{align*}
				&|e'| = k_1  + k_2\\
				&\Aq \sigma 3(J_0, 1) = s\\
				&\Aq c 3 = \dfrac {c(e_3)} {k_1 + k_2} +  s
				\end{align*}
			\end{minipage}
			\captionof{figure}{} \label{figure-34}
		\end{minipage}
		
		\noindent We verify Equation~(\ref{eq-star}):
		\begin{align}  \label{eq-8}
		&(k_1 - 1) c^{(1)} + (k_2-1) c^{(2)} + (k_1 + k_2) c^{(3)} = \\
		\nonumber
		& \left (\tfrac{k_1 + k_2 - 1}{k_1}{c(e_1)} + \tfrac{k_1 + k_2 - 1}{k_2}{c(e_2)} + (k_1 + k_2 - 1)s \right) + \left ({c(e_3)} + (k_1 + k_2 - 1) s \right )= 0.
		\end{align}
		
		\vspace{1em}
		
		\item Case \{w2-r2-nd3-P\}: Assume that $e_4$ is above~$t_2$, namely Cardinality~P. Base~I is precluded. Base~II.2.1.P, Base~II.2.2.P, Base~II.1.P determine $\Aq M 1$, $\Aq M 2$ and $\Aq M 3$, respectively. See Figure~\ref{figure-35}.  \vspace{0.5em}
		
		\noindent  \begin{minipage}{.5\textwidth}
			\centering
			\begin{overpic}[scale=1.3]{\figsdir/298.pdf}  
				\put (27,-4) {\scalebox{1}{$t_2$}}
				\put (62,-4) {\scalebox{1}{$t_3$}}
				\put (43,-4) {\scalebox{1}{$w_0$}}
				\put (-1,30) {\scalebox{1}{$e_1$}}	
				\put (-1,8) {\scalebox{1}{$e_2$}}	
				\put (98,25) {\scalebox{1}{$e_3$}}	 
				\put (43,52) {\scalebox{1}{$A_0$}}
			\end{overpic} 
			\vspace{1em}
			
			$M_0$
		\end{minipage}%
		\begin{minipage}{0.45\textwidth}
			\centering
			\begin{align*}
			&\dfrac {c(e_1)} {k_1}  + \dfrac {c(e_2)} {k_2}  + s = 0 \\
			&\dfrac {c(e_3)} {k_1 + k_2 + 1}  + s = 0
			\end{align*}
		\end{minipage}
		
		\vspace{1.5em} 
		
		\begin{minipage}{\textwidth}
			\noindent	\begin{minipage}[t]{.3\textwidth}
				\centering 
				\begin{overpic}[scale=1]{\figsdir/302.pdf}  
					\put (44,-3) {\scalebox{1}{$t_1$}}
					\put (33,-3) {\scalebox{1}{$u$}}
					\put (55,-3) {\scalebox{1}{$v$}}
					\put (54,54) {\scalebox{1}{$A^{(1)}$}} 
					
					\put (-3,30) {\scalebox{1}{$e_1$}}	
					\put (-3,9) {\scalebox{1}{$e_2$}}	
					\put (98,25) {\scalebox{1}{$e_3$}}	
				\end{overpic}\vspace{1em}
				
				$M^{(1)}$ \\ \vspace{0.5em}
				{\small
					$\begin{aligned} 
					&|e'| = k_1 + 1 \quad |e''| = k_2\\
					&\Aq \sigma q(J_0, 1) = s\\
					&c^{(1)} = \dfrac {c(e_1)} {k_1+1} + \dfrac {c(e_2)} {k_2} +  s
					\end{aligned}$}
			\end{minipage}\hspace{1em}
			\begin{minipage}[t]{.3\textwidth} 
				\centering
				\begin{overpic}[scale=1]{\figsdir/303.pdf}   
					\put (44,-3) {\scalebox{1}{$t_1$}}
					\put (33,-3) {\scalebox{1}{$u$}}
					\put (55,-3) {\scalebox{1}{$v$}}
					\put (32,54) {\scalebox{1}{$A^{(2)}$}}
				\end{overpic}  
				
				\vspace{1em}
				
				$M^{(2)}$ \\ \vspace{0.5em}
				{\small
					$\begin{aligned}
					&|e'| = k_1  \quad |e''| = k_2 + 1\\
					&\Aq \sigma 2(J_0, 1) = s\\
					&c^{(2)} = \dfrac {c(e_1)} {k_1}  + \dfrac {c(e_2)} {k_2+1} +  s
					\end{aligned}$ }
			\end{minipage}\hspace{1em}
			\begin{minipage}[t]{.3\textwidth} 
				\centering
				\begin{overpic}[scale=1]{\figsdir/304.pdf} 
					\put (44,-3) {\scalebox{1}{$t_1$}}
					\put (33,-3) {\scalebox{1}{$u$}}
					\put (55,-3) {\scalebox{1}{$v$}}
					\put (54,54) {\scalebox{1}{$A^{(3)}$}}
				\end{overpic}
				
				\vspace{1em}
				
				$M^{(3)}$ \\ \vspace{0.5em}
				{\small
					$\begin{aligned}
					&|e'| = k_1 + k_2\\
					&\Aq \sigma 3(J_0, 1) = s\\
					&c^{(3)} = \dfrac {c(e_3)} {k_1 + k_2} + s
					\end{aligned}$ }
			\end{minipage} 
			\captionof{figure}{} \label{figure-35}
		\end{minipage}
		
		\noindent We verify Equation~(\ref{eq-star}):
		\begin{align} \label{eq-9} &(k_1 + 1) c^{(1)} + (k_2+1) c^{(2)} + (k_1 + k_2) c^{(3)} =\\ 
		\nonumber
		& (k_1 + k_2 + 1)(\tfrac 1 {k_1} c(e_1) + \tfrac 1 {k_2} c(e_2) + s) + (k_1 + k_2 + 1)(\tfrac 1 {k_1 + k_2 + 1} c(e_3) + s) = 0 + 0 = 0.
		\end{align}
		
		\vspace{1em}
		
		\item Case \{w2-r2-nd2\}: Assume that $\nddeg A_0$ is 2. Then both $e_1$, $e_4$ are dangling. Let $h$ be equal to $h(e_2)$ equal to $h(e_3)$.

		\item Case \{w2-r2-nd2-M\}: Recall that $e_1$ is above~$t_2$. Assume that $e_4$ is above~$t_3$. Then $k_2 + 1 = |A_0| = k_3 + 1$. Since $|e_2| = |e_3|$, the columns corresponding to $t_2$ and $t_3$ in $A_0$ are equal. So $M_0$ is not full-rank, contradicting Lemma~\ref{lemma-limit-full-rank}.

		\item Case \{w2-r2-nd2-P\}: Assume that $e_4$ is above $t_2$. Then $k_3 = |A_0| = k_2+2$, and Base~I is precluded. In Base~II all vertices of $\ndGqAo$ belong to Case~(r1-nd2). Thus, $\ndGqAo$ has a single edge $|e'|$. The Case~(r1-nd2) on the end of $e_3$ above $v$ implies that $|e'| = k_3 - 1 = k_2 + 1$. This is the only possibility, shown in Figure~\ref{figure-36}.
		\begin{minipage}{\textwidth}
			\noindent\begin{minipage}[t]{.45\textwidth}
				\centering
				\begin{overpic}[scale=1]{\figsdir/305.pdf} 
					\put (43,41) {\scalebox{1}{$A_0$}}
					\put (28,-4) {\scalebox{1}{$t_2$}}
					\put (61,-4) {\scalebox{1}{$t_3$}}
					\put (43,-4) {\scalebox{1}{$w_0$}}	
					\put (-3,7) {\scalebox{1}{$e_2$}}	
					\put (97,18) {\scalebox{1}{$e_3$}}	 
				\end{overpic}
				
				\vspace{1em}
				
				$M_0$ \\ \vspace{0.5em}
				$\begin{aligned}
				&s = \sigma_0(J_0, 2) = \sigma_0(J_0, 3)\\
				&\dfrac{c_h}{k_2} + s = 0, \quad
				\dfrac{c_h}{k_2+2} + s = 0\\
				\end{aligned} $
				
			\end{minipage}\hspace{0.8em}
			\begin{minipage}[t]{.45\textwidth} 
				\centering
				\begin{overpic}[scale=1]{\figsdir/306.pdf}  
					\put (45,-4) {\scalebox{1}{$t_1$}}
					\put (32,-4) {\scalebox{1}{$u$}} 
					\put (56,-4) {\scalebox{1}{$v$}} 
				\end{overpic} 
				
				\vspace{1em}
				
				$M^{(1)}$ \\ \vspace{0.5em}
				$\begin{aligned}
				&|e'| = k_2 + 1\\ 
				&\Aq \sigma 1(J_0, 1) = s\\
				&\Aq c 1 = \dfrac{c_h}{k_2+1} + s\\
				\end{aligned}$
				
			\end{minipage}
			\captionof{figure}{} \label{figure-36}
		\end{minipage}
		\vspace{1em}
		
		From $c_h + k_2s = 0$ and $c_h + (k_2+2)s = 0$ it follows that $\Aq c 1 = c_h + (k_2+1)s = 0$, hence $\Aq M 1$ is not full-dimensional.   
		
		\item Case \{w2-r1\}: Assume that $r_0(A_0) = 1$. Then $\vale A_0 = 3$. Since $A_0$ satisfies no-return, let $e_2$, $e_3$ be non-dangling edges in $\Neigh {A_0}$, above $t_2$, $t_3$, respectively. Let $e_1$ be the remaining edge of $\Neigh {A_0}$. We may assume without loss of generality that~$e_1$ is above~$t_2$. By the refinement property $k_1 + k_2 = |A_0| = k_3$. Let $\tilde A$ be the unique vertex of $\ndGqAo$ with $\Aq r q(\tilde A) = 1$. If $\tilde A$ is above $u$ (resp. $v$), then the vertices of $\ndGqAo$ above $v$ (resp. $u$) belong to the Case~(r0-nd2) of the local properties (Case~(r0-nd3) would contradict that $v$ (resp. $u$) is divalent). These facts and Lemma~\ref{lemma-vertices-in-GqA0} determine the classes. 
		
		\item Case \{w2-r1-nd3\}: Assume that $\nddeg A_0$ is 3. We get two elements in $\pcstar {M_0}$, shown in Figure~\ref{figure-37}.
		\vspace{1em}
		
		\noindent    
		\begin{minipage}{\textwidth}
			\begin{minipage}[t]{.30\textwidth}
				\centering 
				\begin{overpic}[scale=1]{\figsdir/307.pdf}  
					\put (27,-5) {\scalebox{1}{$t_2$}}
					\put (62,-5) {\scalebox{1}{$t_3$}}
					\put (42,-5) {\scalebox{1}{$w_0$}}
					\put (-4,31) {\scalebox{1}{$e_1$}}	
					\put (-4,7) {\scalebox{1}{$e_2$}}	
					\put (96,20) {\scalebox{1}{$e_3$}}	 
					\put (43,42) {\scalebox{1}{$A_0$}}
				\end{overpic}
				
				\vspace{1em}
				
				local part around $A_0$
				\begin{align*}
				&\sigma_0(J_{A_0},2) = \frac {c(e_1)} {k_1}  + \frac {c(e_2)} {k_2}  \\
				&\sigma_0(J_{A_0},3) = \frac {c(e_3)} {k_1 + k_2 } 
				\end{align*}
			\end{minipage}\hspace{0.5em}
			\begin{minipage}[t]{.30\textwidth} 
				\centering
				\begin{overpic}[scale=1]{\figsdir/308.pdf} 
					\put (43,-4) {\scalebox{1}{$t_1$}}
					\put (32,-4) {\scalebox{1}{$u$}}
					\put (55,-4) {\scalebox{1}{$v$}}
					\put (31,42) {\scalebox{1}{$A^{(q)}$}}
				\end{overpic}

				\vspace{1em}
				
				$\Aq \dtmor 1 (\tilde A) = u$
				\begin{align*}
				&|e'| = k_3 \\ 
				&\Aq \sigma 1(J_{A_0},1) = \frac {c(e_3)} {k_1 + k_2} 
				\end{align*}
			\end{minipage}\hspace{0.5em}
			\begin{minipage}[t]{.30\textwidth} 
				\centering
				\begin{overpic}[scale=1]{\figsdir/309.pdf} 
					\put (43,-4) {\scalebox{1}{$t_1$}}
					\put (32,-4) {\scalebox{1}{$u$}}
					\put (55,-4) {\scalebox{1}{$v$}}
					\put (54,42) {\scalebox{1}{$A^{(q)}$}}
				\end{overpic}  
				
				\vspace{1em}
				
				$\Aq \dtmor 2 (\tilde A) = v$
				\begin{align*}
				&|e'| = k_1  \quad |e''| = k_2\\
				&\Aq \sigma 2(J_{A_0},1) = \dfrac {c(e_1)} {k_1} + \dfrac {c(e_2)} {k_2}  
				\end{align*}
			\end{minipage}
			\captionof{figure}{} \label{figure-37}
		\end{minipage}
		
		\vspace{1em} 
		
		Thus, \[\Aq \sigma 1(J_{A_0},1) + \Aq \sigma 2(J_{A_0},1) =  \sigma_0(J_{A_0}, 2) + \sigma_0(J_{A_0}, 3).\]
		
		\vspace{1em}
		
		\item Case \{w2-r1-nd2\}: Assume that $\nddeg A_0$ is 2. We may assume that $e_1$ is dangling. So $k_3 = k_2 + 1$. Let $h = h(e_2) = h(e_3)$.  We get two elements in $\pcstar {M_0}$, shown in Figure~\ref{figure-38}.
		
		\vspace{1em}
		
		\noindent
		\begin{minipage}{\textwidth}
			\begin{minipage}[t]{.30\textwidth}
				\centering 
				\begin{overpic}[scale=1]{\figsdir/310.pdf}  
					\put (25,-5) {\scalebox{1}{$t_2$}}
					\put (63,-5) {\scalebox{1}{$t_3$}}
					\put (42,-5) {\scalebox{1}{$w_0$}}
					\put (-1,7) {\scalebox{1}{$e_2$}}	
					\put (97,13) {\scalebox{1}{$e_3$}}	 
					\put (43,30) {\scalebox{1}{$A_0$}}
				\end{overpic}
				
				\vspace{1em}
				
				local part around $A_0$
				\begin{align*}
				&\sigma_0(J_{A_0},2) = \dfrac {c_h} {k_1}\\
				&\sigma_0(J_{A_0},3) = \dfrac {c_h} {k_1 + 1} 
				\end{align*}
			\end{minipage}\hspace{0.5em}
			\begin{minipage}[t]{.3\textwidth} 
				\centering
				\begin{overpic}[scale=1]{\figsdir/311.pdf} 
					\put (43,-4) {\scalebox{1}{$t_1$}}
					\put (32,-4) {\scalebox{1}{$u$}}
					\put (55,-4) {\scalebox{1}{$v$}}
					\put (31,30) {\scalebox{1}{$A^{(q)}$}}
				\end{overpic}
				
				\vspace{1em}
				
				$\Aq \dtmor 1 (\tilde A) = u$
				\begin{align*}
				&|e'| = k_2 + 1\\ 
				&\Aq \sigma 1(J_{A_0},1)= \dfrac {c_h} {k_2 + 1}  
				\end{align*} 
			\end{minipage}\hspace{0.5em} 
			\begin{minipage}[t]{.3\textwidth} 
				\centering
				\begin{overpic}[scale=1]{\figsdir/312.pdf}  
					\put (43,-4) {\scalebox{1}{$t_1$}}
					\put (32,-4) {\scalebox{1}{$u$}}
					\put (55,-4) {\scalebox{1}{$v$}}
					\put (54,30) {\scalebox{1}{$A^{(q)}$}}
				\end{overpic}  
				
				\vspace{1em}
				
				$\Aq \dtmor 2 (\tilde A) = v$
				\begin{align*}
				&|e'| = k_2\\
				&\Aq \sigma 2(J_{A_0},1) = \dfrac {c_h} {k_2} 
				\end{align*}
			\end{minipage}
			\captionof{figure}{} \label{figure-38}
		\end{minipage}
		
		\vspace{1em} 
		
		Thus, \[\Aq \sigma 1(J_{A_0},1) + \Aq \sigma 2(J_{A_0},1) = \sigma_0(J_{A_0}, 2) + \sigma_0(J_{A_0}, 3).\]
		
		\vspace{1em}
		
		\item Proof of \eqref{eq-star} for case \{w2-r1\}: There is another vertex $B_0$ above $w_0$ with $r_0(B_0) = 1$. The previous analysis holds for $B_0$, with notation entirely analogous. Note that in $\Aq M q$, $\Aq \dtmor q (\tilde A) \ne \Aq \dtmor q (\tilde B)$ because $\ch u = \ch v = 1$. So $\Aq \dtmor q (\tilde A)$ determines the gluing datum, and it still holds that \[\Aq \sigma 1(J_{B_0},1) + \Aq \sigma 2(J_{B_0},1) =  \sigma_0(J_{B_0}, 2) + \sigma_0(J_{B_0}, 3).\] This gives the following calculation, which verifies Equation~(\ref{eq-star}): 
		\begin{align*}
		\Aq c1 &= \Aq \sigma 1(J_{A_0},1) + \Aq \sigma 1(J_{B_0},1) + s,
		&
		c^{(2)} &= \Aq \sigma 2(J_{A_0},1) + \Aq \sigma 2(J_{B_0},1) + s,
		\end{align*}
		\begin{align} \label{eq-10}
		c^{(1)} + c^{(2)} &= \sigma_0(2) + \sigma_0(3) = 0 + 0 = 0.
		\end{align}
	\end{itemize} 
	
	\subsection*{Conclusion} It is remarkable how diverse the arguments of Section~\ref{sec-constructions} are. By no means do we stand in front of a construction that has been repeated with subtle variations. The richness and diversity of the behaviour of possibly full-dimensional gluing datums defied many attempts of further consolidation into fewer cases. The end result is exhaustive, so Lemma~\ref{lemma-balancing-condition} is verified. This finishes the proof of Theorem~\ref{theorem-gonality}. The method is effective as well; see \cite{dra} for code.
	
	\subsection*{Preview part II} To summarize the past sneak peeks, let $\Aq \dtmor 1$, $\Aq \dtmor 2$ be
	\DTmors that share a limit $\dtmor_0$. The cone $C_{\dtmor_0}$
	may be viewed as a common face to the cones $C_{\Aq \dtmor 1}$
	and~$C_{\Aq \dtmor 2}$. Hence, it is natural to glue at
	$C_{\dtmor_0}$ and mod out by isomorphisms.  The possibilities
	for $C_{\dtmor_0}$ include the cases that we skipped where
	the source of $\dtmor_0$ is not trivalent. The moduli space $\TM d g$ 
	that emerges by gluing all genus-$g$ full-dimensional cones, as mentioned in Subsection~\ref{subsection-gonality}, is the main object in Part~II of
	this series. We study the connectivity of this space and its projection to $\MTrop g$ to get a
	stronger version of Theorem~\ref{theorem-gonality}, in the sense that we obtain a count of realizations. Also, the result becomes independent from the work in \Cite{cd18}.
	
	\sloppy
	\printbibliography

@article{abbr15,
  title = {Lifting Harmonic Morphisms {{I}}: Metrized Complexes and {{Berkovich}} Skeleta},
  author = {Amini, Omid and Baker, Matthew and Brugallé, Erwan and Rabinoff, Joseph},
  date = {2015},
  journaltitle = {Research in the Mathematical Sciences},
  shortjournal = {Research in the Mathematical Sciences},
  volume = {2},
  pages = {7},
  issn = {2197-9847},
  eprint = {1303.4812v3},
  eprinttype = {arxiv},
  file = {/home/asterionv/Zotero-articles/Amini et al. - 2015 - Lifting harmonic morphisms. I Metrized complexes .pdf},
  number = {1}
}

@article{abbr15a,
  title = {Lifting Harmonic Morphisms {{II}}: Tropical Curves and Metrized Complexes},
  author = {Amini, Omid and Baker, Matthew and Brugallé, Erwan and Rabinoff, Joseph},
  date = {2015},
  journaltitle = {Algebra \& Number Theory},
  shortjournal = {Algebra \& Number Theory},
  volume = {9},
  pages = {267--315},
  issn = {1944-7833},
  eprint = {1404.3390v1},
  eprinttype = {arxiv},
  file = {/home/asterionv/Zotero-articles/Amini et al. - 2015 - Lifting harmonic morphisms. II Tropical curves an.pdf},
  number = {2}
}

@article{acp15,
  title = {The Tropicalization of the Moduli Space of Curves},
  author = {Abramovich, Dan and Caporaso, Lucia and Payne, Sam},
  date = {2015},
  journaltitle = {Ann. Sci. Éc. Norm. Supér.(4)},
  shortjournal = {Ann. Sci. Éc. Norm. Supér.(4)},
  volume = {48},
  pages = {765--809},
  issn = {0012-9593},
  eprint = {1212.0373v2},
  eprinttype = {arxiv},
  file = {/home/alejandro/Bibliography/Articles/ens_ann-sc_48_765-809.pdf;/home/alejandro/Dropbox/Zotero/Articles/Abramovich et al. - 2015 - The tropicalization of the moduli space of curves.pdf},
  number = {4}
}

@article{bak08,
  title = {Specialization of Linear Systems from Curves to Graphs},
  author = {Baker, Matthew},
  date = {2008},
  journaltitle = {Algebra \& Number Theory},
  shortjournal = {Algebra \& Number Theory},
  volume = {2},
  pages = {613--653},
  issn = {1944-7833},
  eprint = {0701075v4},
  eprinttype = {arxiv},
  file = {/home/alejandro/Dropbox/Zotero/Articles/Baker - 2008 - Specialization of linear systems from curves to gr.pdf},
  number = {6}
}

@article{bbm11,
  title = {Tropical {{Open Hurwitz Numbers}}},
  author = {Bertrand, Benoît and Brugallé, Erwan and Mikhalkin, Grigory},
  date = {2011},
  journaltitle = {Rendiconti del Seminario Matematico della Università di Padova},
  shortjournal = {Rendiconti del Seminario Matematico della Università di Padova},
  volume = {125},
  pages = {157--171},
  issn = {0041-8994},
  eprint = {1005.4628v3},
  eprinttype = {arxiv},
  file = {/home/asterionv/Zotero-articles/Bertrand et al. - 2010 - Tropical open Hurwitz numbers.pdf}
}

@article{bn07,
  title = {Riemann–{{Roch}} and {{Abel}}–{{Jacobi}} Theory on a Finite Graph},
  author = {Baker, Matthew and Norine, Serguei},
  date = {2007},
  journaltitle = {Advances in Mathematics},
  shortjournal = {Advances in Mathematics},
  volume = {215},
  pages = {766--788},
  issn = {0001-8708},
  eprint = {0608360v3},
  eprinttype = {arxiv},
  file = {/home/alejandro/Dropbox/Zotero/Articles/Baker and Norine - 2007 - Riemann–Roch and Abel–Jacobi theory on a finite gr.pdf},
  number = {2}
}

@article{bn09,
  title = {Harmonic Morphisms and Hyperelliptic Graphs},
  author = {Baker, Matthew and Norine, Serguei},
  date = {2009},
  journaltitle = {International Mathematics Research Notices},
  shortjournal = {International Mathematics Research Notices},
  volume = {2009},
  pages = {2914--2955},
  issn = {1073-7928},
  file = {/home/alejandro/Dropbox/Zotero/Articles/Baker and Norine - 2009 - Harmonic morphisms and hyperelliptic graphs.pdf},
  number = {15}
}

@incollection{cap14,
  ids = {cap14a,cap14b},
  title = {Gonality of Algebraic Curves and Graphs},
  booktitle = {Algebraic and Complex Geometry},
  author = {Caporaso, Lucia},
  date = {2014},
  pages = {77--108},
  publisher = {{Springer}},
  eprint = {1201.6246v5},
  eprinttype = {arxiv},
  file = {/home/alejandro/Dropbox/Zotero/Articles/Caporaso - 2014 - Gonality of algebraic curves and graphs.pdf}
}

@article{cd18,
  title = {On Metric Graphs with Prescribed Gonality},
  author = {Cools, Filip and Draisma, Jan},
  date = {2018},
  journaltitle = {Journal of Combinatorial Theory, Series A},
  shortjournal = {Journal of Combinatorial Theory, Series A},
  volume = {156},
  pages = {1--21},
  issn = {0097-3165},
  eprint = {1602.05542},
  eprinttype = {arxiv},
  file = {/home/alejandro/Dropbox/Zotero/Articles/Cools and Draisma - 2018 - On metric graphs with prescribed gonality.pdf}
}

@article{cha13,
  title = {Tropical Hyperelliptic Curves},
  author = {Chan, Melody},
  date = {2013},
  journaltitle = {Journal of Algebraic Combinatorics},
  shortjournal = {Journal of Algebraic Combinatorics},
  volume = {37},
  pages = {331--359},
  issn = {0925-9899},
  eprint = {1110.0273v1},
  eprinttype = {arxiv},
  file = {/home/alejandro/Dropbox/Zotero/Articles/Chan - 2013 - Tropical hyperelliptic curves.pdf},
  number = {2}
}

@article{cmr16,
  title = {Tropicalizing the Space of Admissible Covers},
  author = {Cavalieri, Renzo and Markwig, Hannah and Ranganathan, Dhruv},
  date = {2016},
  journaltitle = {Mathematische Annalen},
  shortjournal = {Mathematische Annalen},
  volume = {364},
  pages = {1275--1313},
  issn = {0025-5831},
  file = {/home/asterionv/Zotero-articles/Cavalieri et al. - 2016 - Tropicalizing the space of admissible covers 2.pdf},
  number = {3-4}
}

@misc{dra,
  title = {Programs},
  author = {Draisma, Jan},
  url = {https://mathsites.unibe.ch/jdraisma/index.php?location=programs}
}

@inproceedings{eh87,
  title = {Irreducibility and Monodromy of Some Families of Linear Series},
  author = {Eisenbud, David and Harris, Joe},
  date = {1987},
  volume = {20},
  pages = {65--87},
  url = {http://eudml.org/doc/82193},
  eventtitle = {Annales Scientifiques de l'{{École Normale Supérieure}}},
  file = {/home/alejandro/Dropbox/Zotero/Articles/Eisenbud and Harris - 1987 - Irreducibility and monodromy of some families of l.pdf}
}

@article{gkm09,
  title = {Tropical Fans and the Moduli Spaces of Tropical Curves},
  author = {Gathmann, Andreas and Kerber, Michael and Markwig, Hannah},
  date = {2009},
  journaltitle = {Compositio Mathematica},
  shortjournal = {Compositio Mathematica},
  volume = {145},
  pages = {173--195},
  issn = {1570-5846},
  eprint = {0708.2268v2},
  eprinttype = {arxiv},
  file = {/home/asterionv/Zotero-articles/Gathmann et al. - 2009 - Tropical fans and the moduli spaces of tropical cu.pdf},
  number = {1}
}

@article{hkn13,
  title = {Rank of Divisors on Tropical Curves},
  author = {Hladký, Jan and Král', Daniel and Norine, Serguei},
  date = {2013},
  journaltitle = {Journal of Combinatorial Theory, Series A},
  shortjournal = {Journal of Combinatorial Theory, Series A},
  volume = {120},
  pages = {1521--1538},
  issn = {0097-3165},
  eprint = {0709.4485v4},
  eprinttype = {arxiv},
  number = {7}
}

@article{koz09,
  title = {The {{Topology}} of {{Moduli Spaces}} of {{Tropical Curves}} with {{Marked Points}}},
  author = {Kozlov, Dmitry N},
  date = {2009},
  journaltitle = {Asian Journal of Mathematics},
  shortjournal = {Asian Journal of Mathematics},
  volume = {13},
  pages = {385--404},
  issn = {1945-0036},
  eprint = {0809.4357v1},
  eprinttype = {arxiv},
  file = {/home/asterionv/Zotero-articles/Kozlov - 2009 - The Topology of Moduli Spaces of Tropical Curves w.pdf},
  number = {3}
}

@article{lpp12,
  title = {A Note on {{Brill}}–{{Noether}} Theory and Rank-Determining Sets for Metric Graphs},
  author = {Lim, Chang Mou and Payne, Sam and Potashnik, Natasha},
  date = {2012},
  journaltitle = {International Mathematics Research Notices},
  shortjournal = {International Mathematics Research Notices},
  volume = {2012},
  pages = {5484--5504},
  issn = {1073-7928},
  eprint = {1106.5519v2},
  eprinttype = {arxiv},
  file = {/home/alejandro/Dropbox/Zotero/Articles/Lim et al. - 2012 - A note on Brill–Noether theory and rank-determinin.pdf},
  number = {23}
}

@inproceedings{mik07,
  title = {Tropical Geometry and Its Applications},
  author = {Mikhalkin, Grigory},
  date = {2007},
  pages = {827--852},
  eprint = {0601041v2},
  eprinttype = {arxiv},
  eventtitle = {Proceedings of the {{International Congress}} of {{Mathematicians Madrid}}, {{August}} 22–30, 2006},
  isbn = {3-03719-022-1}
}

\end{document}